\newtheorem{thm}{Theorem}[section] 
\newtheorem{lem}[thm]{Lemma} 
\newtheorem{cor}[thm]{Corollary} 
\newtheorem{prop}[thm]{Proposition}
\theoremstyle{definition} 
\newtheorem{rem}[thm]{Remark} 
\theoremstyle{remark}
\theoremstyle{definition}
\def\O{\Omega}
\def\S{\Sigma} 
\def\n{\nabla}
\def\p{\partial}
\def\a{\alpha}
\def\n{\nabla}
\def\O{\Omega}
\def\p{\partial}
\def\a{\alpha}
\def\k{\kappa}
\def\De{\Delta}
\def\n{\nabla}
\def\<{\langle}
\def\>{\rangle}
\def\De{\Delta}
\def\n{\nabla}
\def\RR{\mathbb{R}}
\def\ff{\mathcal{F}}
\def\C{\mathcal{C}}
\def\SS{\mathbb{S}}
\def\La{\Lambda}
\def\O{\Omega}
\def\p{\partial}
\def\a{\alpha}
\def\v{\varphi}
\def\ve{\varepsilon}
\def\wh{\widehat}
\def\wt{\widetilde}
\def\R{\mathbb{R}}
\def\I{\mathcal{I}}
 \def\T{\mathcal{T}}
  \def\L{\mathcal{L}}
\def\ol{\overline}
\patchcmd{\abstract}{\scshape\abstractname}{\textbf{\abstractname}}{}{}
\def\@makefnmark{} 
\numberwithin{equation}{section}
\begin{document}
\title[Power mean curvature flow]{Hypersurfaces with capillary boundary evolving by volume preserving power mean curvature flow}

\author{Carlo Sinestrari}
\address{(Carlo Sinestrari) Dipartimento Di Matematica, Universit\`a degli Studi di Roma "Tor Vergata", Via della Ricerca Scientifica, 00133,  Roma, Italy}

\email{sinestra@mat.uniroma2.it}

\author{Liangjun Weng}

\address{(Liangjun Weng) Dipartimento Di Matematica, Universit\`a degli Studi di Roma "Tor Vergata", Via della Ricerca Scientifica, 00133,  Roma, Italy}
\email{liangjun.weng@uniroma2.it}

\subjclass[2020]{Primary: 53E10. Secondary: 35B40, 35K93}

\keywords{Mean curvature flow, capillary isoperimetric ratio, pinching estimate, curvature estimates}

\maketitle
\begin{abstract}
In this paper, we introduce a volume- or area-preserving curvature flow for hypersurfaces with capillary boundary in the half-space, with speed given by a positive power of the mean curvature with a non-local averaging term.  We demonstrate that for any convex initial hypersurface with a capillary boundary, the flow exists for all time and smoothly converges to a spherical cap as $t\to +\infty$. 
\end{abstract}


\section{Introduction}

In this paper, we investigate the behavior of a curvature flow of capillary hypersurfaces with a prescribed contact angle condition at the boundary. Curvature flows have been widely investigated in the case of a closed manifold, starting from the classical paper of Huisken \cite{Hui84} on mean curvature flow, which is a time-dependent family of immersions $X: M^n\times[0, T)\to \RR^{n+1}$ of an $n$-dimensional manifold satisfying
\begin{eqnarray*}\label{MC flow}
    \p_tX= -H \nu,
\end{eqnarray*}
where $\nu$ is the outer normal and $H$ is the mean curvature. In \cite{Hui84} it was proved that the mean curvature flow collapses arbitrary initial convex hypersurface to round points in finite time, and this result was the starting point of a fruitful line of research on the formation of singularities which led to relevant geometric applications through the decades.

Later, Schulze \cite{Sch02,Sch05,Sch06} considered a flow of the form $\p_tX= -H^\a \nu$ for a general power $\a>0$ of the mean curvature. He proved that convex hypersurfaces collapse to a point and that the asymptotic shape is round if $\a>1$ and if the principal curvatures of the initial surface are sufficiently pinched. The analysis of a general power has various motivations. As observed in \cite{Sch02}, the $H^\a$-flow is the gradient flow of the area functional of $\S_t:=X(M^n,t)$ with respect to the $L^{\frac{\a+1}{\a}}$-norm, generalizing a well-known property in the $\a=1$ case. Furthermore, a weak formulation of the $H^\a$-flow was later used in \cite{Sch08} to prove isoperimetric inequalities in Riemannian manifolds. We further mention two very recent papers, where it is shown that the power mean curvature flow occurs respectively as the limit of the level set flow of a time-fractional Allen-Cahn equation \cite{DNV24}  and of a suitable minimizing movement evolution of sets \cite{BK24}. In addition to the powers of the mean curvature, a vast literature has been devoted to hypersurface flows where the speed is given by general nonlinear functions of the principal curvatures, see e.g. \cite{And94, A07, AM, AMZ, BCD} and the references therein.

 In \cite{Hui87}, Huisken considered a a \textit{non-local} variant of the mean curvature flow, \begin{eqnarray}\label{huisken flow}
    \p_tX= (h(t)-H) \nu,
\end{eqnarray}
where $h(t):= \frac{1}{|\S_t|}\int_{\S_t} HdA$  is chosen to ensure that the volume enclosed by $\S_t$ remains constant. A remarkable feature of this flow is that the area $|\S_t|$ is monotone decreasing so that the isoperimetric ratio of the enclosed domain improves with the flow. Huisken proved that if $\S_0$ is convex then the flow \eqref{huisken flow} has a solution for all times and converges smoothly to a round sphere as $t\to+\infty$. McCoy \cite{Mc03} obtained the same convergence result by modifying the term $h(t)$ in \eqref{huisken flow} in order to keep the area $|\S_t|$ constant; in this case, the enclosed volume is increasing and the isoperimetric ratio is again improving. Similar results were then obtained for other constrained curvature flows in various settings \cite{And01, CM, CS10,  Mc04, Mc05, Mc17}.

In \cite{S15}, the first author studied the flow analogous to \cite{Hui87} for a general power $\a>0$ of the mean curvature, 
\begin{eqnarray}\label{sine flow}
    \p_t X=(h(t)-H^\a) \nu.
\end{eqnarray}
The main observation was that the monotonicity properties of the constrained flows can be used as a tool to obtain better convergence results than in the standard case (similar ideas had already been used in \cite{And01, CM} in the case $\a=1$). In \cite{S15}  it was proved that \eqref{sine flow} drives any convex initial data to a round sphere as $t \to \infty$ for a general $\a>0$. We remark that, for the standard (local) power mean curvature flow, the smooth convergence to a round profile is only known under strong curvature pinching assumptions on the data if $\a>1$ \cite{Sch05}, and it is even false for some values of $\a<1$, see the introduction in \cite{S15}  for a more detailed discussion. Similar monotonicity-based techniques, coupled with powerful results of convex analysis, have been applied to more general classes of flows in recent years, see \cite{ ALWX,  AW21, BS, BS18}

In  \cite{GL15}, Guan-Li invented a local type of mean curvature flow, which is also volume preserving and area decreasing as the non-local flow in \cite{Hui87}. Its speed function is given by $\p_t X=(n-\<X,\nu\> H)\nu$, and this flow drives star-shaped closed hypersurfaces to a round sphere as $t\to +\infty$. See also \cite{CGLS, GL09, GL21, GLW}, which give many other significant developments for similar flows in various settings afterward. In particular, we point out that the monotonicity properties of these flows, both in the local and nonlocal case, have been applied to obtain geometric inequalities of the Alexandrov-Fenchel type, possibly for more general domains than in the classical setting of convex analysis. 

While all the above references deal with closed hypersurfaces, our interest in this paper lies in studying hypersurfaces \textit{with boundary}
satisfying a Neumann condition. This case has also been studied for a long time, see e.g. \cite{Stahl}, but initially fewer results were known compared to the closed case. In recent years, research has made great strides in constructing flows for hypersurfaces with boundaries and obtaining geometric inequalities.  For instance, works such as \cite{SWX} and \cite{WeX21} have investigated the inverse nonlinear curvature flow for hypersurfaces with free and capillary boundaries in the Euclidean unit ball respectively, and have derived some new Alexandrov-Fenchel type inequalities. For capillary hypersurfaces in the half-space, references such as \cite{MWW} and \cite{HWYZ, WWX22, WWX23} have explored mean curvature flow and inverse curvature flow with capillary boundaries in the half-space, respectively. 
Very recently, the asymptotic behavior of capillary convex hypersurface evolving by the Gauss curvature type flow was analyzed in \cite{MWW23}. 
All these works on flows with boundary concern local type flows. In this paper, instead, we design and analyze a \textit{nonlocal} flow corresponding to \eqref{sine flow} for hypersurfaces with capillary boundaries in the half-space. 
To the best of our knowledge, non-local curvature flows with boundary in the smooth setting have only been studied in dimension $n=1$, such as the area-preserving curve shortening flow with free boundary in $\mathbb{R}^2$ analyzed by M\"{a}der-Baumdicker \cite{Mad15, Mad18}.

Typically, non-local type flows pose different challenges compared to their local counterparts. In particular, some maximum principle arguments can no longer be applied and well-known properties of the local case, such as the avoidance principle, do no longer hold. The capillary setting also poses additional difficulties because it requires analysis of the boundary behavior of geometric quantities. On the other hand, it is remarkable that, by suitably modifying the geometric quantities, we are able to recover in the capillary case the essential properties used for the convergence result in the closed case \cite{S15}, in particular the monotonicity of the isoperimetric ratio adapted to the capillary setting.

To describe our results, we introduce some notation and definitions.
Let $\Sigma$ be a compact hypersurface in the Euclidean closed half-space $ \ol{\RR^{n+1}_+}:=\{x\in \RR^{n+1} \, |\, \<x,E_{n+1}\>\ge 0\}$ ($n\geq 1$), with boundary $\p\Sigma$ lying in $\p \RR^{n+1}_+$, where $E_{n+1}=(0,\ldots, 0,1)$ is the unit inner normal to $\p\RR^{n+1}_+$ in $\ol{\RR^{n+1}_+}$. Such a hypersurface is called a
\textit{capillary hypersurface} if 
$\S$ intersects $\p {{\R}}_+^{n+1}$ at a constant contact angle $\theta\in (0,\pi)$ along $\p\S$, that is,
\begin{eqnarray}\label{contact angle}
\<\nu,E_{n+1}\>=\cos\theta, ~~~ \text{ on } \p \S,
\end{eqnarray}where $\nu$ is the unit outward normal of $\S$. In particular,  if $\S$ meets the boundary orthogonally, i.e. $\theta=\frac \pi 2$,  it is called a hypersurface with \textit{free boundary}. 
Denote  $\widehat \Sigma$  the bounded domain in $\ol{\RR^{n+1}_+}$ enclosed by $\S\subset \ol{\RR^{n+1}_+}$ and by $\p \RR^{n+1}_+$.
The boundary of $\widehat{\S}$  consists of two parts: one is $\Sigma$ and the other, which
will be denoted by $\widehat{\p \Sigma}$,  lies on   $\p \RR^{n+1}_+$. 
$\S$ and $\wh{\p\S}$ have a common boundary $\p\S$. The well-known \textit{capillary area} functional (see for instance the comprehensive books by Finn \cite[Section 1.4]{Finn} and Maggi \cite[Chapter 19]{Maggi}) for $\S$ is defined as  \begin{eqnarray*}\label{wetting}
W_{\theta}(\widehat{\Sigma}):=  |\S|-\cos\theta |\widehat{\p\S}|.\end{eqnarray*} 
Moreover, it is easy to see that (see e.g. \cite[Lemma 2.14]{MWWX} or \cite[Remark 2.1]{PP})
\begin{eqnarray}\label{equivalent}
    W_{\theta}(\wh\S)= \int_\S (1-\cos\theta \<\nu,E_{n+1}\> )dA.
\end{eqnarray}

Let $M$  be a $n$-dimensional smooth compact manifold with boundary $\p M$ and $\S_0:=X_0(M) \subset \ol{\RR^{n+1}_+}$ be a capillary hypersurface with contact angle $\theta\in(0,\pi)$. We consider a family of smooth capillary hypersurfaces $\S_t$ in $\ol{\RR^{n+1}_+}$ starting from $\S_0$, given by the embeddings $X:M \times [0, T) \to \ol{\RR^{n+1}_+}$ and satisfying
\begin{eqnarray}\label{MCF-capillary}
\left\{
\begin{array}{llll}
\p_t X&=&(\phi(t)-H^\a)  \wt\nu, \quad &
\hbox{ in } M\times [0,T),\\
\< \nu ,E_{n+1} \> &=& \cos\theta 
\quad  & \hbox{ on }\partial M\times [0,T),\\
X(\cdot,0)&=& X_0(\cdot) & \hbox{ on } M,
\end{array} \right.
\end{eqnarray} where 
$\wt \nu:=\nu-\cos\theta E_{n+1}$ is the \textit{capillary outward normal} of $\S_t\subset \ol{\RR^{n+1}_+}$. 
It is easy to observe that  the capillary boundary condition $\<\nu,E_{n+1}\>=\cos\theta$  is equivalent to \begin{eqnarray*}    \< \wt \nu,E_{n+1}\>=0,  ~~~\text{ on } \p M.\end{eqnarray*} This ensures that the boundary of $\p \S_t$ evolves inside $\p \ol{\RR^{n+1}_+}$
along the flow \eqref{MCF-capillary}.

In this paper, we choose the non-local term $\phi$ in \eqref{MCF-capillary} to be either
\begin{itemize}
    \item 

\begin{eqnarray}\label{phi-volume pres}
    \phi(t):=\frac{1}{W_{\theta}(\wh{\S_t}) } \int_{\S_t} (1-\cos\theta \<\nu,E_{n+1}\>) H^\a  dA,
\end{eqnarray}
or
\item 
\begin{eqnarray}\label{phi-area pres}
    \phi(t):=\frac{  \int_{\S_t} (1-\cos\theta \<\nu,E_{n+1}\>) H^{\a+1} dA}{\int_{\S_t} (1-\cos\theta \<\nu,E_{n+1}\>) H dA }  .
\end{eqnarray}
\end{itemize}
The motivation for such choices is that in case \eqref{phi-volume pres} the flow \eqref{MCF-capillary} is \textit{volume preserving}, i.e. the volume of $\wh{\S_t}$ remains constant, while under \eqref{phi-area pres} it is \textit{area preserving}, in the sense that the capillary area $W_{\theta}(\wh{\S_t})$ is constant. In both cases, after defining the capillary isoperimetric ratio $\I_\theta(\wh{\S_t})$, see \eqref{cap iso ratio}, we will see in Proposition \ref{cap isoper ratio} that $\I_\theta(\wh{\S_t})$ is non-increasing along the flow, with strict monotonicity unless $\S_t$ is a spherical cap.

The main result of this paper is the following.
 \begin{thm}\label{thm1} Let $\S_0$ be a smooth strictly convex capillary hypersurface in $ \ol{\RR^{n+1}_+}$ with contact angle $\theta\in(0,\frac \pi 2]$. Then the flow \eqref{MCF-capillary}, with $\phi$ defined as in \eqref{phi-volume pres} (resp. \eqref{phi-area pres}) has a unique, smooth solution $\S_t$ which is a strictly convex capillary hypersurface for all $t\in[0,+\infty)$. Moreover, the hypersurfaces $\S_t$  converge smoothly as $t\to +\infty$ to a spherical cap with boundary angle $\theta$ and having the same volume (resp. capillary area) as $\S_0$.  
\end{thm}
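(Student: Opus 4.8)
The plan is to adapt, step by step, the scheme of \cite{S15} for closed hypersurfaces, the genuinely new difficulties coming from the capillary boundary and from the nonlocal term. First, \eqref{MCF-capillary} is a quasilinear parabolic system for the embedding $X$, coupled with the oblique (Neumann-type) boundary condition $\langle\nu,E_{n+1}\rangle=\cos\theta$; recasting it as a scalar parabolic equation for a graph function over $\S_0$ in the direction $\wt\nu$ (which keeps boundary points on $\p\RR^{n+1}_+$ since $\langle\wt\nu,E_{n+1}\rangle=0$ there), checking the compatibility conditions on $\S_0$, and linearising, the standard theory for parabolic problems with oblique boundary conditions yields a unique smooth solution on a maximal time interval $[0,T)$. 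By the choice $\wt\nu=\nu-\cos\theta\,E_{n+1}$ the contact angle condition is preserved and $\p\S_t\subset\p\RR^{n+1}_+$ for $t\in[0,T)$; by \eqref{phi-volume pres} (resp.\ \eqref{phi-area pres}) the enclosed volume $|\wh{\S_t}|$ (resp.\ the capillary area $W_\theta(\wh{\S_t})$) stays constant; and Proposition \ref{cap isoper ratio} gives that $\I_\theta(\wh{\S_t})$ is non-increasing, strictly so unless $\S_t$ is a spherical cap.

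\emph{Preservation of strict convexity.} Next I would derive the evolution equation of the Weingarten operator $(h^i{}_j)$ along \eqref{MCF-capillary}: it is a reaction--diffusion system to which Hamilton's tensor maximum principle applies in the interior, the zero-order reaction term restricted to a null eigenvector of $(h_{ij})$ having the favourable sign. The new point is the boundary. Differentiating the capillary condition tangentially along $\p\S_t$ and using that $\p\RR^{n+1}_+$ is totally geodesic, one finds that along $\p\S_t$ the direction tangent to $\p\S_t$ splits off as a principal direction and that the normal derivative of the relevant components of $h_{ij}$ obeys a sign-definite relation; it is precisely here that the hypothesis $\theta\in(0,\tfrac{\pi}{2}]$ enters, to make that relation have the sign needed to run the boundary version of the tensor maximum principle (in the spirit of Stahl \cite{Stahl} and of the recent capillary mean curvature flow works). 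Hence $h_{ij}>0$ is preserved for all $t\in[0,T)$, and in particular the largest principal curvature is bounded by $H$.

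\emph{A priori estimates and all-time existence.} Convexity, the constraint (fixed volume with monotone capillary area, or the reverse), and the capillary isoperimetric inequality give uniform two-sided bounds on the inradius and circumradius of $\wh{\S_t}$, so the $\S_t$ remain in a fixed compact region without degenerating. One then proves a uniform bound $H\le C$: the nonlocal coefficient $\phi(t)$ is a weighted mean of $H^\a$ (resp.\ a ratio of such means), hence bounded once $H$ is bounded above, and a maximum-principle argument for a suitable combination of $H$ with the support function closes the estimate; a strictly positive lower bound $H\ge c>0$ follows from the preserved convexity together with the constraint, which prevents the hypersurfaces from flattening out. With $0<c\le\kappa_i\le C$ the flow is uniformly parabolic with bounded coefficients, so Krylov--Safonov estimates and Schauder theory for oblique boundary value problems, bootstrapped, give uniform $C^{k,\gamma}$ bounds for every $k$; hence $T=+\infty$.

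\emph{Convergence to a spherical cap, and the main obstacle.} Integrating the monotonicity identity of Proposition \ref{cap isoper ratio} in time, the nonnegative ``umbilicity deficit'' integrand is integrable over $[0,\infty)$, and the uniform estimates make it uniformly continuous in $t$, so it tends to $0$; thus every smooth subsequential limit $\S_\infty$ of the $\S_t$ as $t\to+\infty$ is totally umbilical with contact angle $\theta$, i.e.\ a spherical cap, and by the preserved constraint it is the unique spherical cap with the prescribed volume (resp.\ capillary area), which identifies the full limit. Finally, linearising the suitably renormalised flow about this spherical cap, the linearised Jacobi-type operator with the capillary boundary condition is strictly negative once restricted to the subspace cut out by the constraint --- the spherical cap being the strict minimiser of $W_\theta$ at fixed enclosed volume among capillary hypersurfaces --- so a standard linearisation-plus-interpolation argument upgrades the convergence to a smooth exponential one. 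I expect the main obstacles to be the boundary analysis in the convexity step, where the capillary condition and the nonlocal speed interact and the closed-case maximum-principle shortcuts fail verbatim, and the sharp curvature estimates --- particularly the strictly positive lower bound on $H$ and the control of $\phi(t)$ --- which together furnish the uniform parabolicity underpinning everything else.
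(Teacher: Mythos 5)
Your skeleton matches the paper's (monotone capillary isoperimetric ratio, pinching of capillary radii, tensor maximum principle with boundary for convexity, Tso-type bounds, regularity, identification of the limit), but two steps are asserted rather than proved, and one of them is the heart of the matter. The claim that ``a strictly positive lower bound $H\ge c>0$ follows from the preserved convexity together with the constraint'' is not an argument: uniform bounds on the capillary inner and outer radii do not exclude convex hypersurfaces with nearly flat regions where $H$ is arbitrarily small, and for $\a\neq 1$ this is exactly where the operator \eqref{para operator} degenerates. The paper's proof of Proposition \ref{H lower bound prop} needs three separate ingredients that your sketch omits: a positive lower bound on the nonlocal term $\phi(t)$, which comes from the capillary Minkowski inequality of \cite{WWX23} (Proposition \ref{phi bounds}), not merely from boundedness of $H$; a translation control showing $\Sigma_t$ stays inside a \emph{fixed} spherical cap, obtained via an Alexandrov reflection argument of Chow--Gulliver type adapted to the capillary setting (Theorem \ref{ChowGulliver}, Lemma \ref{enclosed cap}) --- this does not follow from the radius bounds, since the caps realizing them may drift in time; and a reversed Tso-type test function $\Psi=H^\a/(c-\bar u)$ as in \cite{BS}, where the lower bound on $\phi$ is what produces the contradiction. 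A parallel issue affects your upper bound: Tso's argument requires an interior cap with a \emph{fixed} center over a uniform time interval (Lemma \ref{lower bdd bar u}), and since the nonlocal flow has no avoidance principle this must be proved by hand, by comparison with shrinking spherical caps.

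The convexity step also has a gap for $\a\in(0,1)$: the reaction term in the evolution of $h_{ij}$ contains $\a(\a-1)(1-\cos\theta\<\nu,E_{n+1}\>)H^{\a-2}(\n_1 H)^2<0$ on a null eigenvector, so Hamilton's basic tensor maximum principle with ``favourable zero-order sign'' does not apply; one needs the refined version with the supremum over $\Lambda$ of gradient terms (Lemma \ref{tensr MP thm}, after \cite{A07,HWYZ}), choosing $\Lambda^l_k=\n_1 h_{kl}/h_{ll}$ and a Cauchy--Schwarz argument to absorb the bad term, as in the proof of Proposition \ref{convexity pres}. Finally, on convergence: vanishing of the monotonicity integrand in Proposition \ref{cap isoper ratio} forces $H\to q(t)$, i.e.\ the limit is a CMC capillary hypersurface (not directly ``totally umbilical''), and one then invokes the capillary Alexandrov-type theorem \cite{JWXZ,Wente} to identify the cap; full convergence follows from uniqueness of the subsequential limit fixed by the constraint, so your proposed linearisation and exponential-decay step is additional machinery whose key spectral claim (strict negativity of the constrained Jacobi operator with capillary boundary condition) you have not justified and the paper does not need.
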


In the rest of this article, we assume $\theta\in(0,\frac \pi 2)$, since the case $\theta=\frac \pi 2$ can be reduced to the closed hypersurface counterpart \cite{Hui87,Mc03, S15} by using a simple reflection argument along $\p\RR^{n+1}_+$. We leave out the case $\theta>\frac{\pi}{2}$, where the curvature estimates are not available due to the bad sign of the boundary terms (cf. \eqref{neuman1} and \eqref{neuman2} in the proof of Proposition \ref{convexity pres}). A similar restriction of the contact angle occurs in \cite{HWYZ, WWX22} and \cite{WeX21}.

On the other hand, we believe that some of the techniques introduced in this paper can be used to study a broad spectrum of quermassintegral-preserving non-local flows for capillary hypersurfaces, as explored in \cite{ALWX, AW21, BS18, CS10, Mc05, Mc17}, among others. In particular, by exploiting some Alexandrov-Fenchel inequality in the capillary case \cite{MWW, MWWX}, coupled with curvature estimates, we plan to establish the convergence of the flow by a symmetric polynomial function of the curvature in the capillary setting in a forthcoming work.

Let us summarize our strategy for proving Theorem \ref{thm1}. Initially, we introduce the capillary isoperimetric ratio and we extend to the capillary setting a pinching estimate for the radii of convex sets obtained in \cite{HS15}. More precisely, in Proposition \ref{cap pinching est} we show that the capillary isoperimetric ratio controls the ratio between the capillary outer and inner radius of our hypersurface.  This result is of independent interest and can potentially offer insights into other capillarity-type problems. Together with the monotonicity of the isoperimetric ratio,  this estimate allows to control of the geometry of $\S_t$ as long as the flow exists. In particular, we can adapt Tso's technique \cite{Tso} to derive a uniform upper bound on the mean curvature, replacing the support function with the capillary support function $\bar u$. We then apply the boundary tensor maximum principle from Hu-Wei-Yang-Zhou \cite{HWYZ} to demonstrate the preservation of strict convexity along our flow. The last major step consists of proving a uniform positive bound from below for the mean curvature. This is important because the parabolic operator associated with our problem has a coefficient proportional to $H^{\alpha-1}$, see \eqref{para operator}, thus if $\alpha\neq 1$ the flow becomes either degenerate or singular as $H$ approaches zero. To bound $H$ from below, we use a variant of Tso's technique developed by Bertini-Sinestrari \cite[Section 4]{BS}, combined with an estimate on the position of $\S_t$ based on an Alexandrov reflection argument. In this way, we obtain two-sided curvature bounds which ensure the long-time existence and uniform $C^\infty$-estimates of the solution to the flow. As in \cite[Section 4.2]{S15} or \cite[Section 4]{BS}, we can again use the monotonicity of the isoperimetric ratio to show that the flow must converge to a spherical cap.
 
\

\textbf{The paper is organized as follows}: In Section \ref{sec2}, we recall and derive the relevant evolution equations for our flow and prove the pinching estimate on the capillary outer and inner radius. In Section \ref{sec3}, we show the monotonicity of the capillary isoperimetric ratio and the invariance of the strict convexity, and we obtain the curvature estimates that imply the convergence of the flow.
 
\section{Preliminaries}\label{sec2}

In this section, we collect some preliminaries and recall some evolution equations along the flow \eqref{MCF-capillary}, which will serve as basic tools in the subsequent sections. In addition, we establish a capillary-type pinching estimate (cf. Proposition \ref{cap pinching est}) concerning the capillary outer and inner radius of the convex hypersurface $\S\subset \ol{\RR^{n+1}_+}$.

\subsection{Evolution equations}\ 

Let $\Sigma_t$ denote a family of smooth, embedded hypersurfaces with capillary boundary in $\ol{\R^{n+1}_+}$, defined by the embeddings $X(\cdot,t): M\to \ol{\mathbb{R}^{n+1}_+}$, which evolves according to the general flow
	\begin{eqnarray}\label{flow with normal and tangential}
		\p_t X=\ff\nu+\T,
	\end{eqnarray}  for some speed function $\ff$ and  tangential vector field $\T\in T\Sigma_t$.  Our flow \eqref{MCF-capillary}  can be written in the form \eqref{flow with normal and tangential}  by choosing 
 \begin{eqnarray}
 \ff &=& (1-\cos\theta \<\nu,E_{n+1}\>)  f, \nonumber \\
  \qquad \T &=& -f \cos \theta E_{n+1}^T = f \cos \theta \, ( \<\nu,E_{n+1}\> \nu - E_{n+1} ), \label{T} 
 \end{eqnarray} where $$f:=\phi-H^\a$$
 and $E_{n+1}^T$ is the tangential projection of $E_{n+1}$ onto $T\S_t.$

Let $\mu$ be the unit outward co-normal of $\p\S_t$ in $\S_t$. From the capillary boundary condition in \eqref{MCF-capillary} we know that  
	\begin{eqnarray}\label{co-normal}
		E_{n+1}=\cos\theta \nu-\sin\theta \mu, \quad \text{ on } \p \S_t,
	\end{eqnarray}
	which implies
 \begin{eqnarray}\label{tangential T}
 \T |_{\p \S_t}=f \sin\theta \cos\theta \mu=\ff\cot\theta \mu.
 \end{eqnarray}
 see \cite[Section 2.5]{WWX22} for more details. We also recall the useful property that $\mu$ is a principal direction for the second fundamental form, that is,
 \begin{equation}\label{principal}
\qquad \qquad h(V,\mu)=0, \quad \hbox{ on }\p\S_t, 
\end{equation}
 for any $V$ tangent vector to $\partial \Sigma_t$, see e.g. \cite[Lemma 2.2]{AiSo}.

 We now recall the evolution equations for the induced metric $g_{ij}$, the unit normal $\nu$, the second fundamental form $(h_{ij})$, the Weingarten tensor  $(h^i_j)$ and the mean curvature $H$ of the hypersurfaces $\Sigma_t$ along our flow. We denote by $\n$ and $\Delta$ the Levi-Civita connection and Laplace operator on $\S_t$ with respect to the induced metric, and we set $(h^2)_{ij}:=\sum_k h_{ik}h^k_j$, and $|h|^2:=\sum_{i,j} h_{ij}h^j_i$. Then the following result holds, see \cite[Proposition 2.11]{WeX21} for a detailed proof.
	
	\begin{prop}[\cite{WeX21}]\label{basic evolution eqs}
		Along the flow \eqref{flow with normal and tangential}, there holds 
		\begin{enumerate} 
			\item $\p_t g_{ij}=2\ff h_{ij}+\n_i \T_j+\n_j\T_i$.
			\item $\p_t\nu =-\n \ff+h(e_i,\T)e_i$.	
			\item $\p_t h_{ij}=-\n^2_{ij} \ff +\ff h_{ik}h_{j}^k +\n_\T h_{ij}+h_{j}^k\n_i\T_k+h_{i}^k\n_j \T_k.$
			\item $\p_t h^i_j=-\n^i\n_{j}\ff -\ff h_{j}^kh^{i}_k+\n_\T h^i_j.$
			\item $\p_t H=-\Delta \ff-|h|^2 \ff+ \langle\n  H, \T\rangle $.

		\end{enumerate}
	\end{prop}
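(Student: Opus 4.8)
The plan is to obtain all five identities by differentiating the defining formulas of the geometric quantities with respect to $t$, exchanging $\partial_t$ with the spatial derivatives on $M$, and substituting $\partial_t X=\mathcal{F}\nu+\mathcal{T}$ from \eqref{flow with normal and tangential}. It is convenient to read the flow as the superposition of a purely normal flow with speed $\mathcal{F}$ and a tangential reparametrization generated by $\mathcal{T}$: since $g_{ij}$, $\nu$, $h_{ij}$, $h^i_j$ and $H$ are all equivariant under diffeomorphisms of $M$, the contribution of $\mathcal{T}$ to the evolution of each of them is the corresponding Lie derivative $\mathcal{L}_\mathcal{T}$, so in practice one carries out the genuine computation for the normal part and recognizes the remaining terms as $\mathcal{L}_\mathcal{T}g_{ij}$, $\mathcal{L}_\mathcal{T}h_{ij}$, and so on. Throughout one uses the flatness of the ambient half-space, so that the Gauss equation carries no ambient-curvature term and the Codazzi equation reads $\nabla_i h_{jk}=\nabla_j h_{ik}$, together with the Weingarten relation $\partial_i\nu=h_i^k\partial_k X$.

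For (1), I would differentiate $g_{ij}=\langle\partial_i X,\partial_j X\rangle$ and insert $\partial_i\partial_t X=(\partial_i\mathcal{F})\nu+\mathcal{F}\,\partial_i\nu+\partial_i\mathcal{T}$, splitting $\partial_i\mathcal{T}$ into its tangential part $\nabla_i\mathcal{T}$ and a normal part; pairing with $\partial_j X$ kills the normal components, the Weingarten relation contributes $\mathcal{F}h_{ij}$ from each slot, and the tangential part of $\partial_i\mathcal{T}$ produces $\nabla_i\mathcal{T}_j$, giving the claim. For (2), note that $|\nu|^2\equiv 1$ forces $\partial_t\nu$ to be tangent to $\Sigma_t$, and its components are read off from $0=\partial_t\langle\nu,\partial_i X\rangle$: the $\mathcal{F}$-part gives $-\nabla_i\mathcal{F}$, and, writing $\langle\nu,\partial_i\mathcal{T}\rangle=\partial_i\langle\nu,\mathcal{T}\rangle-\langle\partial_i\nu,\mathcal{T}\rangle=-h(e_i,\mathcal{T})$, the $\mathcal{T}$-part gives $h(e_i,\mathcal{T})e_i$. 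For (3), I would differentiate $h_{ij}$ (in the paper's sign convention), substitute the flow and use (2): the $\mathcal{F}\nu$ contribution, after commuting two covariant derivatives past the unit normal and applying the Gauss and Codazzi equations, produces $-\nabla^2_{ij}\mathcal{F}+\mathcal{F}h_{ik}h_j^k$, while the $\mathcal{T}$ contribution collects into $\nabla_\mathcal{T}h_{ij}+h_j^k\nabla_i\mathcal{T}_k+h_i^k\nabla_j\mathcal{T}_k=\mathcal{L}_\mathcal{T}h_{ij}$.

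Finally, (4) follows from (3) by raising an index with $g^{ij}$ and using $\partial_t g^{ij}=-g^{ik}g^{jl}\partial_t g_{kl}$ from (1): the metric variation converts $+\mathcal{F}h_{ik}h_j^k$ into $-\mathcal{F}h^k_j h^i_k$ and reorganizes the $\mathcal{T}$-terms into $\nabla_\mathcal{T}h^i_j$; tracing (4) then yields (5), since $h^k_i h^i_k=|h|^2$, the remaining tangential-derivative contributions cancel, and $\nabla_\mathcal{T}H=\langle\nabla H,\mathcal{T}\rangle$. The hard part is the bookkeeping in (3): commuting the two covariant derivatives past $\nu$ correctly and checking that every term generated by $\mathcal{T}$ closes up into the stated form, all with consistent sign conventions. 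No boundary terms enter at this stage, since the proposition concerns only the interior evolution and the capillary condition is used solely in the later sections; a fully detailed computation is given in \cite[Proposition 2.11]{WeX21}.
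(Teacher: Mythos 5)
Your proposal is correct and takes essentially the same route as the paper, which in fact gives no proof of this proposition at all but simply cites \cite[Proposition 2.11]{WeX21}: your computation (differentiate the defining formulas, handle the normal speed via the Weingarten relation and the Gauss--Codazzi equations, and recognize the tangential contribution as the Lie derivative $\mathcal{L}_{\mathcal{T}}$) is exactly the standard derivation carried out in that reference. One small caveat: performed literally, the index-raising in item (4) yields $\mathcal{L}_{\mathcal{T}}h^i_j=\nabla_{\mathcal{T}}h^i_j+h^i_k\nabla_j\mathcal{T}^k-h^k_j\nabla_k\mathcal{T}^i$ rather than $\nabla_{\mathcal{T}}h^i_j$ alone; these extra terms cancel upon tracing, so item (5) is unaffected, and this imprecision is inherited from the quoted statement rather than introduced by your argument.
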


The flow \eqref{MCF-capillary} is parabolic if $H>0$ and the local existence of a smooth solution is obtained by standard techniques, see e.g. \cite{Sch05} and \cite[Ch. 18]{ACGL} for the case without boundary, and \cite[Ch. 2]{Wand}, \cite[Section 4]{WWX22} for the extension to the capillary setting. In addition, the solution remains smooth, with bounds on all derivatives, as long as the curvature is bounded and the parabolicity remains strict. It follows that, for any smooth initial data with $H>0$ satisfying the boundary condition, flow \eqref{MCF-capillary} has a unique smooth solution defined in a maximal time interval $[0,T^*)$, where $T^* \leq +\infty$. Moreover, if $T^*$ is finite, then either the second fundamental form of $\S_t$ becomes unbounded as $t \to T^*$, or the infimum of $H$ approaches zero.

For the flow \eqref{MCF-capillary}, we introduce the linearized parabolic operator as
\begin{eqnarray}\label{para operator}
	\mathcal{L}:=\partial_{t}-\a (1-\cos\theta \<\nu,E_{n+1}\>) H^{\a-1} \Delta.
\end{eqnarray}
In the following computations, we use the convention of Einstein summation and the indices appearing after the semi-colon denote the covariant derivatives.
  
For any $z\in \RR^{n+1}$, the \textit{capillary support function} was introduced in \cite[Eq. (2.6)]{WWX23} as 
	\begin{eqnarray}\label{relative support}
		\bar u:=\frac{\<X-z, \nu\>}{1-\cos\theta \<\nu,E_{n+1}\>},
	\end{eqnarray} and is a capillary variant of the support function $u:=\<X-z,\nu\>$. This function will play an important role for us in deriving the curvature estimates of flow \eqref{MCF-capillary} in the next section. We compute the evolution equations for $u$ and $\bar u$. 
 
	\begin{prop}\label{evo of relative u}
		Along the flow \eqref{MCF-capillary}, the  functions $u$ and   $\bar u$ satisfy
  \begin{eqnarray}\label{evo of u}
			\L  u=(1-\cos\theta \<\nu,E_{n+1}\>) \left[ \phi(t)-(1+\a)   H^\a+\a  u  H^{\a-1}|h|^2\right],
		\end{eqnarray}
  and 
		\begin{eqnarray}			\L \bar u & = & \phi -(1+\a) H^\a +\a \bar u H^{\a-1} |h|^2 \nonumber \\
			& & + {2\a H^{\a-1}}  \left\<\n \bar u,\n (1-\cos\theta \<\nu,E_{n+1}\>) \right\>. \label{evo of bar u}
		\end{eqnarray}
On $\p \S_t$, there hold
 \begin{eqnarray}\label{neumann of u}
			\n_{\mu} u=\cot\theta h(\mu,\mu) u,
		\end{eqnarray}  
  and \begin{eqnarray}\label{neumann of bar u}
			\n_{\mu} \bar u=0.
		\end{eqnarray}  
	\end{prop}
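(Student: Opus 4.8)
The plan is to compute each of the four identities by starting from the general evolution equations in Proposition~\ref{basic evolution eqs} specialized to our flow, where $\ff = (1-\cos\theta\<\nu,E_{n+1}\>)f$ with $f = \phi - H^\a$ and $\T$ given by \eqref{T}, and then carefully tracking the contributions of the tangential vector field $\T$, which is the feature that distinguishes the capillary case from the closed case.

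**Step 1: Evolution of $u=\<X-z,\nu\>$.** I would first differentiate $u$ in time: $\p_t u = \<\p_t X,\nu\> + \<X-z,\p_t\nu\>$. Using $\p_t X = \ff\nu + \T$ and item~(2) of Proposition~\ref{basic evolution eqs}, $\p_t\nu = -\n\ff + h(e_i,\T)e_i$, I get $\p_t u = \ff + \<X-z, -\n\ff + h(e_i,\T)e_i\>$. For the Laplacian term, I would use the standard identities $\Delta X = -H\nu$ (here one must be slightly careful: since $X$ maps into $\ol{\RR^{n+1}_+}$ and $\S_t$ has boundary, this is the interior identity) and the Simons-type formula $\Delta u = \Delta\<X-z,\nu\> = \<X-z,\n H\> + H - |h|^2 u$ together with $\n_i u = \<X-z, h_i^k e_k\>$. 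Combining these with $\ff = (1-\cos\theta\<\nu,E_{n+1}\>)f$ and expanding $\Delta\ff$, the terms involving $\n H$ and the gradient of the weight $(1-\cos\theta\<\nu,E_{n+1}\>)$, plus the tangential corrections from $\T$, should telescope. The expected mechanism is that the $\<X-z, h(e_i,\T)e_i\>$ term from $\p_t\nu$ cancels against part of the expansion, leaving exactly the weighted expression $(1-\cos\theta\<\nu,E_{n+1}\>)[\phi - (1+\a)H^\a + \a u H^{\a-1}|h|^2]$ after applying $\L = \p_t - \a(1-\cos\theta\<\nu,E_{n+1}\>)H^{\a-1}\Delta$.

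**Step 2: Evolution of $\bar u = u/(1-\cos\theta\<\nu,E_{n+1}\>)$.** Write $\eta := 1-\cos\theta\<\nu,E_{n+1}\>$, so $\bar u = u/\eta$ and $\ff = \eta f$. I would compute $\L\bar u$ by the quotient rule and the product rule for the operator $\L$: $\L(u/\eta) = \frac{1}{\eta}\L u - \frac{u}{\eta^2}\L\eta + \frac{2\a\eta H^{\a-1}}{\eta^2}\<\n u, \n\eta\>$ after collecting the second-order cross terms (this is where the $2\a H^{\a-1}\<\n\bar u, \n\eta\>$ term in \eqref{evo of bar u} will come from, once I rewrite $\<\n u,\n\eta\>$ in terms of $\<\n\bar u,\n\eta\>$). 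I therefore need the evolution equation for $\eta = 1-\cos\theta\<\nu,E_{n+1}\>$, which follows from item~(2): $\p_t\eta = \cos\theta\<\n\ff - h(e_i,\T)e_i, E_{n+1}\> = \cos\theta\<\n\ff, E_{n+1}^T\> - \cos\theta\, h(E_{n+1}^T,\T)$. For the Laplacian, $\Delta\eta = -\cos\theta\,\Delta\<\nu,E_{n+1}\>$, and $\Delta\<\nu,E_{n+1}\> = -\<\n H, E_{n+1}^T\> - |h|^2\<\nu,E_{n+1}\> + \n H\cdot(\ldots)$ — more precisely the Codazzi-type identity $\n_i\<\nu,E_{n+1}\> = h_i^k\<e_k,E_{n+1}\>$ and its trace-divergence. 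Plugging in and using Step~1, the algebra should produce \eqref{evo of bar u}. I expect substantial but routine cancellation here.

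**Step 3: The boundary (Neumann) identities.** For \eqref{neumann of u}: on $\p\S_t$, $\n_\mu u = \<X-z, h_\mu^k e_k\> = h(\mu,\mu)\<X-z,\mu\> + h(\mu,V)\<X-z,V\>$ for $V$ tangent to $\p\S_t$; by \eqref{principal}, $h(\mu,V)=0$, so $\n_\mu u = h(\mu,\mu)\<X-z,\mu\>$. It remains to show $\<X-z,\mu\> = \cot\theta\,u$ on $\p\S_t$, which should follow from the decomposition \eqref{co-normal}, $E_{n+1} = \cos\theta\,\nu - \sin\theta\,\mu$, together with the fact that $\p\S_t\subset\p\RR^{n+1}_+ = \{\<x,E_{n+1}\>=0\}$: taking $z\in\p\RR^{n+1}_+$ (or reducing to that case), $\<X-z,E_{n+1}\>=0$ on $\p\S_t$, hence $0 = \cos\theta\<X-z,\nu\> - \sin\theta\<X-z,\mu\> = \cos\theta\, u - \sin\theta\<X-z,\mu\>$, giving $\<X-z,\mu\> = \cot\theta\, u$. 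For \eqref{neumann of bar u}: $\n_\mu\bar u = \frac{1}{\eta}\n_\mu u - \frac{u}{\eta^2}\n_\mu\eta$, and $\n_\mu\eta = -\cos\theta\,\n_\mu\<\nu,E_{n+1}\> = -\cos\theta\, h(\mu, E_{n+1}^T)$. On $\p\S_t$, $E_{n+1}^T = -\sin\theta\,\mu$ by \eqref{co-normal}, so $\n_\mu\eta = \cos\theta\sin\theta\, h(\mu,\mu)$. Substituting and using \eqref{neumann of u} and $\eta = 1-\cos^2\theta = \sin^2\theta$ on $\p\S_t$: $\n_\mu\bar u = \frac{1}{\sin^2\theta}\cot\theta\, h(\mu,\mu) u - \frac{u}{\sin^4\theta}\cos\theta\sin\theta\, h(\mu,\mu) = 0$, as claimed.

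**Main obstacle.** The routine but lengthy part is Step~2: keeping track of all the second-order cross terms generated when one applies the non-divergence operator $\L$ to the quotient $u/\eta$, and verifying that everything beyond the listed terms cancels. The more delicate point, conceptually, is making sure the interior Simons-type identities for $u$ and for $\<\nu,E_{n+1}\>$ are applied correctly and that the tangential field $\T$'s contributions to $\p_t h_{ij}$, $\p_t\nu$ (the $h(e_i,\T)e_i$ and $\n_\T$ terms) are consistently incorporated; a useful sanity check is that setting $\theta=\pi/2$ (so $\eta\equiv 1$, $\T\equiv 0$) must reduce \eqref{evo of bar u} to \eqref{evo of u} and to the known closed-case evolution of the support function along the $H^\a$-flow.
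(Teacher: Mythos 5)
Your proposal follows essentially the same route as the paper for the two interior evolution equations: compute $\p_t u$ from Proposition \ref{basic evolution eqs}(2) together with the Codazzi identity $\Delta u=\<X-z,\n H\>+H-u|h|^2$ (the cancellation you anticipate is exactly the one the paper exhibits between $f\cos\theta\, h((X-z)^T,E_{n+1}^T)$ and $h((X-z)^T,\T)$, using \eqref{T}), and then obtain \eqref{evo of bar u} by first deriving $\L(1-\cos\theta\<\nu,E_{n+1}\>)$ from $\Delta\<\nu,E_{n+1}\>=\<\n H,E_{n+1}\>-|h|^2\<\nu,E_{n+1}\>$ and applying the quotient rule for $\L$. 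Two small bookkeeping remarks on your Step 2: the correct cross term is $2\a H^{\a-1}\<\n\bar u,\n\eta\>$ directly (your intermediate expression $\frac{2\a H^{\a-1}}{\eta}\<\n u,\n\eta\>$ omits the compensating $-\frac{2\a H^{\a-1}u}{\eta^2}|\n\eta|^2$ piece, which is absorbed precisely when you pass from $\n u$ to $\n\bar u$), and the sign in your sketched formula for $\Delta\<\nu,E_{n+1}\>$ should be $+\<\n H,E_{n+1}\>$; neither affects the viability of the plan. The one genuine difference is Step 3: the paper simply quotes \eqref{neumann of u} and \eqref{neumann of bar u} from the reference [MWW, Propositions 3.1 and 3.3], whereas you prove them directly via \eqref{principal}, the decomposition \eqref{co-normal}, and $\<X-z,E_{n+1}\>=0$ on $\p\S_t$; your argument is correct and self-contained, with the (correctly flagged) proviso that $z$ must lie in $\p\RR^{n+1}_+$, which is indeed how these identities are used later in the paper (Lemma \ref{lower bdd bar u} and Proposition \ref{H lower bound prop} take $z_*,z^*\in\p\RR^{n+1}_+$). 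So the approaches coincide on the PDE part, and your version adds an explicit verification of the boundary conditions that the paper outsources.
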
	
	
	\begin{proof} Since \eqref{neumann of u} and \eqref{neumann of bar u} have been shown in \cite[Proposition 3.1, Proposition 3.3]{MWW}, we only need to prove \eqref{evo of u} and \eqref{evo of bar u}. 
		From the Codazzi formula, 	
  	\begin{eqnarray*}
			\Delta u=\<X-z, \nabla H\>+H-u|h|^{2},
		\end{eqnarray*}then
  \begin{eqnarray*}
      \p_t u&=& \<\p_t X,\nu\>+\<X-z,\p_t \nu\>
      \\&=& f (1-\cos\theta \<\nu,E_{n+1}\>)+\a (1-\cos\theta \<\nu,E_{n+1}\>)  H^{\a-1} \<X-z,\n H\> \\&& -f\< X-z,\n (1-\cos\theta \<\nu,E_{n+1}\>)\> +h((X-z)^T,\T)
      \\&=& \a (1-\cos\theta \<\nu,E_{n+1}\>) H^{\a-1}(\Delta u-H+u|h|^2)+(1-\cos\theta \<\nu,E_{n+1}\>) f \\&& +f \cos\theta h((X-z)^T, E_{n+1}^T) +h((X-z)^T,\T),
  \end{eqnarray*}where $(X-z)^T$ and $E_{n+1}^T$ denote the tangential projection of $X-z$ and $E_{n+1}$ onto $T\S_t$ respectively. The two terms in the last row cancel each other by \eqref{T} and we obtain \eqref{evo of u}.

Next, we derive \eqref{evo of bar u}. By using the Codazzi formula again,  
\begin{eqnarray}\label{delta nu e}
			\De \<\nu,E_{n+1}\> = \<\n H,E_{n+1}\>-\<\nu,E_{n+1}\>|h|^2,
		\end{eqnarray}then 
		\begin{eqnarray*}
			\<\n f,E_{n+1}\>= -\a H^{\a-1}(\Delta \<\nu,E_{n+1}\>+\<\nu,E_{n+1}\> |h|^2).
		\end{eqnarray*}
From Proposition \ref{basic evolution eqs} (2), 		\begin{eqnarray*}
			\p_t \<\nu,E_{n+1}\>&=& -\<\n (f (1-\cos\theta \<\nu,E_{n+1}\>)) ,E_{n+1}\>+h(\T,E_{n+1}^T)
			\\&=& \a (1-\cos\theta \<\nu,E_{n+1}\>) H^{\a-1}(\Delta \<\nu,E_{n+1}\> +|h|^2 \<\nu,E_{n+1}\>) \\&& +
			f \cos \theta \, h(E_{n+1}^T,E_{n+1}^T) +h(\T,E_{n+1}^T)
			\\&=& \a (1-\cos\theta \<\nu,E_{n+1}\>) H^{\a-1}(\Delta \<\nu,E_{n+1}\> +|h|^2 \<\nu,E_{n+1}\>).
		\end{eqnarray*}
		Therefore,
		\begin{eqnarray*}\label{ev of nu e}
			\L \<\nu,E_{n+1}\>= \a (1-\cos\theta \<\nu,E_{n+1}\>) H^{\a-1} \<\nu,E_{n+1}\> |h|^2,
		\end{eqnarray*}and also
  \begin{eqnarray}\label{ev of nu e 1}
      \L (1-\cos\theta \<\nu,E_{n+1}\>)=-\a\cos\theta \<\nu,E_{n+1}\> (1-\cos\theta \<\nu,E_{n+1}\>) H^{\a-1} |h|^2.   \end{eqnarray}
From \eqref{evo of u} and \eqref{ev of nu e 1}, we derive
		\begin{eqnarray*}
			\L \bar u&=& \frac{1}{1-\cos\theta \<\nu,E_{n+1}\>} \L u -\frac{\<X-z,\nu\>}{(1-\cos\theta\<\nu,E_{n+1}\>)^2} \L (1-\cos\theta \<\nu,E_{n+1}\>) \\
			&&+{2\a H^{\a-1}} \<\n \bar u,\n(1-\cos\theta \<\nu,E_{n+1}\>)\> 
			\\&=&
   \phi-(1+\a) H^\a +\a H^{\a-1} u|h|^2 +\cos\theta \<\nu, E_{n+1}\> \a\bar u  H^{\a-1} |h|^2 \\&& + 2\a H^{\a-1} \left\<\n \bar u,\n (1-\cos\theta \<\nu,E_{n+1}\>)  \right\>
   \\&=& \phi -(1+\a) H^\a +\a \bar u H^{\a-1} |h|^2 + {2\a H^{\a-1}}  \left\<\n \bar u,\n (1-\cos\theta \<\nu,E_{n+1}\>)  \right\>.
		\end{eqnarray*}
	 
	\end{proof}	

 Next, we derive the evolution equation for the mean curvature.
 \begin{prop}\label{evo of H}
    Along the flow \eqref{MCF-capillary}, there hold
  \begin{eqnarray}
      \L H &= &\a(\a-1) (1-\cos\theta \<\nu,E_{n+1}\>)  H^{\a-2} |\n H|^2  \notag \\&& +2\a H^{\a-1} \< \n H, \n (1-\cos\theta \<\nu,E_{n+1}\>) \> -(\phi-H^\a) |h|^2, \label{H evol MCF}
  \end{eqnarray} 
    and 
        \begin{eqnarray}\label{neumann of H}
       \n_\mu H =0, \text{ on } \p \S_t. 
\end{eqnarray} \end{prop}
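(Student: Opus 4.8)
The plan is to compute $\L H = \p_t H - \a(1-\cos\theta\<\nu,E_{n+1}\>)H^{\a-1}\Delta H$ by combining the general evolution equation for $H$ from Proposition \ref{basic evolution eqs}(5) with the specific structure of our speed $\ff = (1-\cos\theta\<\nu,E_{n+1}\>)f$, where $f = \phi - H^\a$, together with the tangential part $\T$ given by \eqref{T}. First I would abbreviate $v := 1-\cos\theta\<\nu,E_{n+1}\>$ so that $\ff = vf$ and $f=\phi-H^\a$; note $\phi(t)$ is spatially constant, so $\n\ff = \n(vf) = f\n v - v\a H^{\a-1}\n H$. Then $\Delta\ff = f\Delta v - \a H^{\a-1}(v\Delta H) - 2\a H^{\a-1}\<\n v,\n H\> - \a(\a-1)vH^{\a-2}|\n H|^2 + (\text{term from }\Delta v)$, where $\Delta v = -\cos\theta\,\Delta\<\nu,E_{n+1}\>$ is handled by the Codazzi identity \eqref{delta nu e}. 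Substituting into $\p_t H = -\Delta\ff - |h|^2\ff + \<\n H,\T\>$ and moving the $v\Delta H$ piece to the left to form $\L H$, the terms proportional to $f\Delta v$ and $|h|^2\ff = |h|^2 vf$ should combine: one gets a contribution $-f v|h|^2$ from the zeroth-order term, and the $f\Delta v$ term produces (via \eqref{delta nu e}) something like $-f\cos\theta(\<\n H,E_{n+1}\> - \<\nu,E_{n+1}\>|h|^2)$. The $\<\n H,\T\>$ term with $\T = f\cos\theta(\<\nu,E_{n+1}\>\nu - E_{n+1})$ contributes $f\cos\theta\<\n H, \<\nu,E_{n+1}\>\nu - E_{n+1}\> = -f\cos\theta\<\n H,E_{n+1}\>$ since $\n H$ is tangent to $\S_t$... wait, I must be careful: $\<\n H,\nu\>=0$ as $\n H$ is tangential, so this gives $-f\cos\theta\<\n H,E_{n+1}\>$. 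I expect the bad-looking $\<\n H, E_{n+1}\>$ terms and the zeroth-order $\cos\theta\<\nu,E_{n+1}\>$ correction terms to cancel pairwise, leaving $\L H = \a(\a-1)vH^{\a-2}|\n H|^2 + 2\a H^{\a-1}\<\n H,\n v\> - (\phi - H^\a)|h|^2$, which is \eqref{H evol MCF}. (An alternative, cleaner route: derive $\L\<\nu,E_{n+1}\>$ first — already essentially computed in the proof of Proposition \ref{evo of relative u} — and exploit that $v$ satisfies \eqref{ev of nu e 1}, so that the $\<\n H,\n v\>$ and $|\n H|^2$ corrections are forced by the product/chain rule structure of $\L$ acting on $\ff = vf$.)

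For the Neumann condition \eqref{neumann of H}, the plan is to differentiate $H$ in the co-normal direction $\mu$ on $\p\S_t$. Since $\mu$ is a principal direction by \eqref{principal}, we have $h(\mu,V)=0$ for all $V$ tangent to $\p\S_t$; writing $H = h(\mu,\mu) + \sum_{A} h(E_A,E_A)$ in an adapted frame $\{\mu, E_A\}$ with $\{E_A\}$ tangent to $\p\S_t$, one computes $\n_\mu H = \sum_i \n_\mu h(E_i,E_i)$ and uses the Codazzi equation to trade $\n_\mu h(E_A,E_A)$ for $\n_{E_A} h(\mu, E_A)$, which vanishes because $h(\mu,E_A)\equiv 0$ along $\p\S_t$ together with the fact that the connection terms involving $\n_{E_A}\mu$ contract against $h(\cdot,E_A)$ or $h(\mu,\cdot)$ which again vanish on the boundary by \eqref{principal}. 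This is exactly the standard computation showing $\n_\mu H = 0$ for capillary (or free boundary) hypersurfaces with constant contact angle; it appears e.g. in \cite{MWW} and I would simply cite it or reproduce the two-line Codazzi argument.

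The main obstacle is the bookkeeping in the first part: tracking the first-order terms $\<\n H, E_{n+1}\>$ and the zeroth-order terms carrying factors of $\cos\theta\<\nu,E_{n+1}\>$ and verifying they cancel exactly, rather than leaving a residual. The non-local term $\phi(t)$ is harmless since it is spatially constant ($\n\phi = 0$, $\Delta\phi = 0$), so it only enters through $-\phi|h|^2$ in the final answer. The key algebraic inputs are the two Codazzi identities $\Delta u = \<X-z,\n H\> + H - u|h|^2$ (not needed here) and $\Delta\<\nu,E_{n+1}\> = \<\n H, E_{n+1}\> - \<\nu,E_{n+1}\>|h|^2$, plus Simons-type commutation is \emph{not} needed because we never differentiate $|h|^2$ — the $-(\phi-H^\a)|h|^2$ term comes directly from the zeroth-order term $-|h|^2\ff$ after the corrections from $\Delta v$ and $\<\n H,\T\>$ cancel. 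Once the cancellation is confirmed, collecting the three surviving terms gives \eqref{H evol MCF}.
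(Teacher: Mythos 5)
Your computation of the evolution equation \eqref{H evol MCF} is correct and follows essentially the same route as the paper: expand $\Delta\big(f(1-\cos\theta\langle\nu,E_{n+1}\rangle)\big)$ by the product rule, use the Codazzi identity \eqref{delta nu e} for $\Delta\langle\nu,E_{n+1}\rangle$, and observe that the term $f\cos\theta\langle\nabla H,E_{n+1}\rangle$ cancels against $\langle\nabla H,\mathcal{T}\rangle$ by \eqref{T}, while the zeroth-order pieces recombine into $-(\phi-H^{\alpha})|h|^2$; you are also right that no Simons identity is needed here and that $\phi(t)$ only enters algebraically.

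The proof you propose for the Neumann condition \eqref{neumann of H}, however, has a genuine gap: it cannot work as a purely static Codazzi argument. In your frame computation the connection terms do not vanish: $\nabla_{e_{\alpha}}\mu$ is tangent to $\partial\Sigma_t$, so $h(\nabla_{e_{\alpha}}\mu,e_{\alpha})$ contracts against the boundary components $h(e_{\beta},e_{\alpha})$, which are not killed by \eqref{principal}. Indeed, the correct static identity is the one from \cite[Proposition 2.4]{WWX22} used later in the paper, $\nabla_{\mu}h_{\alpha\alpha}=\cot\theta\,h_{\alpha\alpha}\big(h(\mu,\mu)-h_{\alpha\alpha}\big)$, which is generically nonzero; summing over $\alpha$ does not give $\nabla_{\mu}H=0$ without further input. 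More fundamentally, $\nabla_{\mu}H=0$ is simply false for a general hypersurface with constant contact angle: already for $n=1$, $\theta=\tfrac{\pi}{2}$, the locally strictly convex curve $x=\tfrac{y^2}{2}+y^3$ in the half-plane meets the axis orthogonally but has $\kappa'(0)=6\neq0$, so the normal boundary derivative of the curvature does not vanish. Hence any argument that does not use the flow must fail, and citing \cite{MWW} does not help: the static Neumann identities proved there are for $u$ and $\bar u$ (and are quoted as such in Proposition \ref{evo of relative u}), not for $H$.

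What the identity \eqref{neumann of H} really expresses is a compatibility condition enforced by the flow: since $\langle\nu,E_{n+1}\rangle\equiv\cos\theta$ on $\partial M$ for all $t$, one may differentiate this in time and use Proposition \ref{basic evolution eqs}(2) together with \eqref{co-normal}, \eqref{tangential T} and \eqref{principal} to get $\sin\theta\,\nabla_{\mu}\mathcal{F}=\cos\theta\,\mathcal{F}\,h(\mu,\mu)$ on $\partial\Sigma_t$, where $\mathcal{F}=f(1-\cos\theta\langle\nu,E_{n+1}\rangle)$. Combining this with the static identity $\nabla_{\mu}\big(1-\cos\theta\langle\nu,E_{n+1}\rangle\big)=\cot\theta\,h(\mu,\mu)\big(1-\cos\theta\langle\nu,E_{n+1}\rangle\big)$ (which does follow from \eqref{co-normal} and \eqref{principal}) yields $\nabla_{\mu}f=0$, i.e.\ $\nabla_{\mu}H^{\alpha}=0$, and then $\nabla_{\mu}H=0$ since $H>0$. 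This time-differentiation step is the missing idea in your plan; without it the boundary identity cannot be obtained.
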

\begin{proof} Using Proposition \ref{basic evolution eqs} (5) and \eqref{delta nu e}, we compute
    \begin{eqnarray*}
        \p_t H&=& -\Delta (f(1-\cos\theta \<\nu,E_{n+1}\>) )-|h|^2 f(1-\cos\theta \<\nu,E_{n+1}\>)  +\< \n H, \T\>
        \\&=& (1-\cos\theta \<\nu,E_{n+1}\>)  (\a H^{\a-1}\Delta H+\a(\a-1) H^{\a-2} |\n H|^2)  \\&& +2\a H^{\a-1} \< \n H,\n (1-\cos\theta \<\nu,E_{n+1}\>) \> 
        \\&&+f \cos\theta (\< \n H,E_{n+1}\>- |h|^2 \<\nu,E_{n+1}\>) \\&& -
        f(1-\cos\theta \<\nu,E_{n+1}\>) |h|^2 +\<\n H, \T\>.
    \end{eqnarray*}Taking into account \eqref{para operator} and \eqref{T}, this implies \eqref{H evol MCF}.

To show \eqref{neumann of H},  let us first take $\{e_\alpha\}_{\alpha=2}^{n}$  an orthonormal frame  of $T(\p\S_t)\subset T\mathbb{R}^n$. Then $\{(e_\alpha)_{\alpha=2}^{n}, e_1:=\mu\}$ forms an orthonormal frame of $T\S_t$. We recall our boundary condition
\begin{equation}\label{boundcond}
\qquad \qquad \langle \nu,E_{n+1}\rangle=\cos\theta \quad  \hbox{ on }\p\S_t.
\end{equation} 
Let us differentiate \eqref{boundcond} with respect to time. Using Proposition \ref{basic evolution eqs} (2), formulas \eqref{co-normal}, \eqref{tangential T}, and \eqref{principal} we obtain along $\p\S_t$
	\begin{eqnarray*}
		0&=&\< \p_t \nu, E_{n+1}\>
		\\&=&\<-\n  \ff+ h(e_i,\T)e_i, \cos\theta \nu-\sin\theta \mu \>
		\\&=&\sin\theta \n_\mu \ff-\cos\theta \ff h(\mu,\mu).
	\end{eqnarray*}
Since $\ff=f(1-\cos\theta \<\nu,E_{n+1}\>)$, this can be rewritten as
	\begin{eqnarray}			\n_\mu (f(1-\cos\theta \<\nu,E_{n+1}\>))=\cot\theta h(\mu,\mu) f(1-\cos\theta \<\nu,E_{n+1}\>).
	\label{first}	\end{eqnarray}
On the other hand, we have by  \eqref{co-normal} and \eqref{principal}
\begin{eqnarray} \nonumber
    \n_\mu (1-\cos\theta \<\nu,E_{n+1}\>)&=& -\cos\theta h(\mu,\mu)\<\mu,E_{n+1}\> \\
&   = & \cot\theta h(\mu,\mu)  (1-\cos\theta \<\nu,E_{n+1}\>). 
\label{second}  
\end{eqnarray}
From \eqref{first} and \eqref{second} we deduce that $\n_\mu f=0$ on $\p \S_t$, that is
\begin{eqnarray}\label{neuman of H}
    \n_\mu H^\a=0 \text{ on } \p \S_t.
\end{eqnarray} 
Since our surfaces satisfy $H>0$, assertion \eqref{neumann of H} follows.
\end{proof}

We point out that, in contrast to the standard power curvature flow $\phi \equiv 0$, the infimum of $H$ is not necessarily increasing and the preservation of the condition $H>0$ is not a direct consequence of the maximum principle. We will see in Proposition \ref{H lower bound prop} that $H$ is indeed bounded from below by a positive constant.

\subsection{Pinching estimate}\ 

Recall that for a convex body $\Omega\subset \RR^{n+1}$, the inner radius and outer radius of $\O$ are defined as 
\begin{eqnarray*}
    \rho_{-}(\O):=\sup\{r>0:  B_r(x_0)\subset \O \text{ for some } x_0\in \RR^{n+1}\},
\end{eqnarray*}and
\begin{eqnarray*}
    \rho_+(\O):=\inf\{r>0:    \O\subset B_r(x_0) \text{ for some } x_0\in \RR^{n+1}\},
\end{eqnarray*}where $B_r(x_0)$ is the ball of radius $r$ centered at $x_0$ in $\RR^{n+1}$. The classical isoperimetric ratio of $\O$ is given by
\begin{eqnarray*}
    \I(\O):= \frac{|\p \O|^{n+1}}{|\O|^n}.
\end{eqnarray*}

For a convex capillary hypersurface $\S\subset \ol{\RR^{n+1}_+}$, we introduce the notion of  the capillary inner radius of $\S$ as
\begin{eqnarray}\label{cap inner rad}
\rho_{-}(\wh{\S},\theta):=\sup\{r>0 ~:~ \widehat{\C_{r,\theta}(x_0)}\subset \widehat\S\text{ for some } x_0\in \p  \RR^{n+1}_+\},\end{eqnarray}and 
 the capillary outer radius of $\S$   as
\begin{eqnarray}\label{cap outer rad}
\rho_+(\wh{\S},\theta):=\inf\{r>0 ~:~\widehat\S\subset \widehat{\C_{r,\theta}(x_0)} \text{ for some } x_0\in  \p \RR^{n+1}_+\},
\end{eqnarray}where 
\begin{eqnarray*}\label{spher cap}
    \C_{r,\theta}(x_0):=\{x\in \ol{\RR^{n+1}_+} ~:~ |x-(x_0-r\cos\theta  E_{n+1})|=r\}
\end{eqnarray*} is the spherical cap centered at $x_0-r\cos\theta E_{n+1}$ with radius $r>0$. For simplicity, when $x_0$ is the origin, we just write $\C_{r,\theta}$ to represent $\C_{r,\theta}(0)$ and $\C_\theta$ to represent $\C_{1,\theta}(0)$.

The \textit{capillary isoperimetric ratio} of $\S$ is defined  as
\begin{eqnarray}\label{cap iso ratio}
    \I_\theta (\wh\S):=\frac{(|\S|-\cos\theta |\widehat{\p\S}|)^{n+1}}{|\widehat\S|^n}= \frac{W_{\theta}(\wh\S)^{n+1}}{|\wh\S|^n}.
\end{eqnarray}
As in the classical case, there holds a capillary isoperimetric inequality (cf. \cite[Theorem 19.21]{Maggi} or \cite[(1.6)]{WWX22}) stating that
\begin{eqnarray}\label{capillary iso ineq}
    \I_\theta(\wh\S)\geq \I_\theta(\wh{\C_\theta}).
\end{eqnarray} Moreover, equality holds in \eqref{capillary iso ineq} if and only if $\S$ is a spherical cap.
 
 In the classical case, it is known that an upper bound on the isoperimetric ratio of a convex body implies a pinching estimate on the outer and inner radii, see \cite[Lemma 4.4]{HS15} or \cite[Proposition 5.1]{And01}. We prove now that an analogous result holds in the capillary case when $0<\theta \leq \frac \pi2$. This is a general property independent of the flow, which has its own interest and may be useful for other capillarity problems. 
 
\begin{prop}\label{cap pinching est}
    Let $\S$ be a convex capillary hypersurface in $\ol{\RR^{n+1}_+}$ and $\theta\in(0,\frac \pi 2]$. 
    If the enclosed domain satisfies $\I_\theta(\wh\S)\leq c_1$ for some positive constant $c_1$, then there exists some  positive constant $c:=c(n,c_1,\theta)$ such that
    \begin{eqnarray}\label{pinching est-cap}
 \rho_+(\wh\S,\theta) \leq c \rho_{-}(\wh\S,\theta).
    \end{eqnarray}
\end{prop}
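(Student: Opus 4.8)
The plan is to reduce the capillary pinching estimate to the classical one of Andrews \cite{And01} / Haslhofer--Schild \cite{HS15} by a doubling (reflection) argument across $\p\RR^{n+1}_+$, taking advantage of the hypothesis $\theta \le \frac\pi2$. First I would consider the reflected set $\O := \wh\S \cup R(\wh\S)$, where $R$ denotes the reflection of $\RR^{n+1}$ in the hyperplane $\p\RR^{n+1}_+$. Since $\S$ is convex and meets $\p\RR^{n+1}_+$ at an angle $\theta\le\frac\pi2$, the convexity of $\wh\S$ together with the contact angle condition guarantees that $\O$ is a convex body in $\RR^{n+1}$ which is symmetric under $R$ (this is precisely where $\theta\le\pi/2$ is used: for $\theta>\pi/2$ the doubled surface would have a concave ridge along $\p\S$). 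A spherical cap $\C_{r,\theta}(x_0)$ doubles to a round ball $B_r(x_0 - r\cos\theta E_{n+1})$ (of the full radius $r$), so the capillary inner/outer radii of $\wh\S$ relate directly to the classical inner/outer radii of $\O$; more precisely $\rho_-(\wh\S,\theta) = \rho_-(\O)$ and $\rho_+(\wh\S,\theta)=\rho_+(\O)$, because any ball realizing the classical radii of a $R$-symmetric convex body may be taken centered on $\p\RR^{n+1}_+$ (otherwise intersect with its reflection, using convexity).

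Next I would compare the classical isoperimetric ratio $\I(\O)$ with the capillary one $\I_\theta(\wh\S)$. On the one hand $|\O| = 2|\wh\S|$. On the other hand the boundary $\p\O$ consists of $\S$ and its reflection $R(\S)$, so $|\p\O| = 2|\S|$; but we need to control $|\p\O|$ in terms of $W_\theta(\wh\S) = |\S| - \cos\theta\,|\wh{\p\S}|$. Since $\cos\theta \ge 0$ this gives only $|\p\O| = 2|\S| \ge 2W_\theta(\wh\S)$, which is the wrong direction. The fix is to bound $|\wh{\p\S}|$ from above in terms of $|\S|$: projecting $\S$ orthogonally onto $\p\RR^{n+1}_+$ is a distance-nonincreasing map onto a set containing $\wh{\p\S}$ (for a convex $\wh\S$ over the base domain $\wh{\p\S}$), so $|\wh{\p\S}| \le |\S|$, whence $W_\theta(\wh\S) \ge (1-\cos\theta)|\S|$ and therefore $|\p\O| = 2|\S| \le \frac{2}{1-\cos\theta} W_\theta(\wh\S)$. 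Combining, $\I(\O) = |\p\O|^{n+1}/|\O|^n \le (1-\cos\theta)^{-(n+1)}\, 2^{n+1} W_\theta(\wh\S)^{n+1}/(2^n|\wh\S|^n) = 2(1-\cos\theta)^{-(n+1)} \I_\theta(\wh\S) \le 2(1-\cos\theta)^{-(n+1)} c_1 =: c_1'$, a bound depending only on $n$, $\theta$, $c_1$.

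Finally I would invoke the classical pinching estimate: for a convex body $\O\subset\RR^{n+1}$ with $\I(\O)\le c_1'$ there is $c = c(n,c_1')$ with $\rho_+(\O) \le c\,\rho_-(\O)$ (\cite[Lemma 4.4]{HS15}, \cite[Proposition 5.1]{And01}). Translating back through $\rho_\pm(\wh\S,\theta)=\rho_\pm(\O)$ yields $\rho_+(\wh\S,\theta) \le c\, \rho_-(\wh\S,\theta)$ with $c = c(n,\theta,c_1)$, as claimed.

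I expect the main technical obstacle to be the first step: verifying rigorously that the doubled body $\O$ is genuinely convex, and that its classical inner and outer balls can be recentered on $\p\RR^{n+1}_+$ so that the radii identities $\rho_\pm(\wh\S,\theta)=\rho_\pm(\O)$ hold exactly (not merely up to constants). This requires a careful argument about the local geometry of $\wh\S$ near $\p\S$ under the constraint $\theta\le\pi/2$ — in particular that the outward normal cones of $\wh\S$ and $R(\wh\S)$ fit together without creating a reentrant edge along $\p\S$ — and about using the $R$-symmetry plus convexity to slide an optimal ball to a center on the mirror hyperplane. The isoperimetric-ratio comparison and the appeal to the classical result are then routine.
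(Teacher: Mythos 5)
Your route (reflect $\wh\S$ across $\p\RR^{n+1}_+$ and apply the closed-body pinching estimate to the doubled body $\O$) differs genuinely from the paper's, which applies \cite[Lemma 4.4]{HS15} directly to the convex body $\wh\S$ itself and then compares the capillary radii with the classical radii of $\wh\S$ through explicit $\theta$-dependent constants. Your scheme can be made to work, but as written it contains a step that fails: the assertion that $\C_{r,\theta}(x_0)$ ``doubles to a round ball of the full radius $r$'' and the resulting exact identities $\rho_\pm(\wh\S,\theta)=\rho_\pm(\O)$. For $\theta<\frac\pi2$ the union of $\wh{\C_{r,\theta}(x_0)}$ with its mirror image is not a ball but a lens, namely the intersection of the two balls of radius $r$ centered at $x_0\mp r\cos\theta E_{n+1}$; its inradius is $(1-\cos\theta)r$ and it is contained in the ball of radius $r\sin\theta$ centered at $x_0$. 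Hence the identities hold only at $\theta=\frac\pi2$ (for small $\theta$ a thin cap has capillary inner radius $r$ while its doubling has inradius of order $r\theta^2$). What is true, and all your argument needs, are two-sided comparisons with constants depending only on $\theta$: after recentering the optimal balls on $\p\RR^{n+1}_+$ by symmetry (intersect with the reflected ball for the outer radius, take the convex hull for the inner one, as you hint), one gets $(1-\cos\theta)\,\rho_-(\wh\S,\theta)\le\rho_-(\O)\le\sin\theta\,\rho_-(\wh\S,\theta)$ and $\rho_+(\O)\le\rho_+(\wh\S,\theta)\le\frac{1}{1-\cos\theta}\,\rho_+(\O)$, and the chain $\rho_+(\wh\S,\theta)\le C(\theta)\rho_+(\O)\le C(\theta)\,c\,\rho_-(\O)\le C'(\theta)\,c\,\rho_-(\wh\S,\theta)$ closes.

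The second gap is the convexity of $\O$, which you assert and defer but which your appeal to \cite[Lemma 4.4]{HS15} cannot do without. It does not follow from convexity of $\wh\S$ alone: one must show that $\wh\S$ lies in the vertical cylinder over its base $\wh{\p\S}$, so that a segment joining a point of $\wh\S$ to a point of its mirror image crosses $\p\RR^{n+1}_+$ inside $\wh{\p\S}$. This is where $\theta\le\frac\pi2$ enters: at each $p\in\p\S$ the normal cone of the convex body $\wh\S$ contains both $-E_{n+1}$ and $\nu(p)=\cos\theta\,E_{n+1}+\sin\theta\,\bar\mu(p)$, hence (since $\cos\theta\ge0$) the horizontal outward conormal $\bar\mu(p)$ of $\wh{\p\S}$; therefore $\wh\S$ lies in every supporting half-space $\{x:\<x-p,\bar\mu(p)\>\le 0\}$, i.e.\ in the cylinder over $\wh{\p\S}$, and convexity of $\O$ follows. (A ball centered strictly above the hyperplane, whose contact angle is obtuse, shows the claim genuinely uses $\cos\theta\ge0$.) The remaining parts of your proposal are fine: the projection bound $|\wh{\p\S}|\le|\S|$, the estimate $W_{\theta}(\wh\S)\ge(1-\cos\theta)|\S|$ (which also follows at once from \eqref{equivalent} since $\<\nu,E_{n+1}\>\le1$), and the comparison $\I(\O)\le 2(1-\cos\theta)^{-(n+1)}\I_\theta(\wh\S)$ are all correct; the paper avoids the doubling altogether and only needs the ``lies over its base'' property in the weaker form that an inscribed ball projects into $\wh{\p\S}$.
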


\begin{proof}
First we observe that the capillary isoperimetric ratio controls from above the classical one.
For any $\theta\in(0,\frac\pi 2]$,  there holds $ (1-\cos\theta)|\S| \leq W_{\theta}(\wh{\S}),$ which yields
    \begin{eqnarray*}
        |\p(\wh\S)|=|\S|+|\wh{\p\S}| \leq 2|\S| \leq \frac{2}{1-\cos\theta} W_{\theta}(\wh{\S}).
    \end{eqnarray*}For a convex capillary domain $\wh\S\subset \ol{\RR^{n+1}_+}$ with $\I_\theta(\wh\S)\leq c_1$, we deduce
    \begin{eqnarray*}
        \I(\wh\S) &=& \frac{|\p(\wh\S)|^{n+1}}{|\wh\S|^n}  \leq \left(\frac 2 {1-\cos\theta}\right)^{n+1} \I_\theta(\wh{\S})  \\& \leq & \left(\frac 2 {1-\cos\theta}\right)^{n+1} c_1.
    \end{eqnarray*} By  \cite[Lemma 4.4]{HS15} applied to  $\wh\S\subset \RR^{n+1}$, we conclude
    \begin{eqnarray}\label{pinching est}
        \rho_+(\wh\S) \leq c_2\rho_{-}(\wh\S),
    \end{eqnarray}for some positive constant $c_2=c_2(n,c_1,\cos\theta)$. This shows that the classical outer and inner radii satisfy a pinching property.
    
To obtain our result, we now have to show that the capillary radii are controlled by the classical ones. We start by estimating from below $\rho_{-}(\wh\S,\theta)$ in terms of $\rho_{-}(\wh \S)$. By definition, $\wh\S$ contains a ball $B_0$, centered at some point $x_0\in \RR^{n+1}$ and of radius $\rho_{-}(\wh \S)$. We  denote by $B_0'$ and $x_0'$  the corresponding projections of $B_0$ and $x_0$ onto $\p\RR^{n+1}_+$.  Observe that  $B_0' \subset  \wh {\p \Sigma}$, by the convexity of $\Sigma$ and the condition $\theta \in (0,\frac \pi2]$.  Let us now consider the spherical cap with angle $\theta$ having the same boundary as $B_0'$, which is $\C_{r_0,\theta}(x_0')\subset \ol{\RR^{n+1}_+}$ with $r_0= \frac{\rho_{-}(\wh\S)}{\sin\theta}$. Then it is easy to see that  $\wh{\C_{r_0,\theta}}\subset \wh{\S}$, see Figure \ref{fig1}. From the definition of capillary inner radius \eqref{cap inner rad}, we deduce \begin{eqnarray}\label{inn-radius-est}
\rho_{-}(\wh\S,\theta) \geq r_0 = \frac{\rho_{-}(\wh\S)}{\sin\theta}.
\end{eqnarray}
\begin{center}	\begin{figure}[H] 	\includegraphics[width=0.8\linewidth]{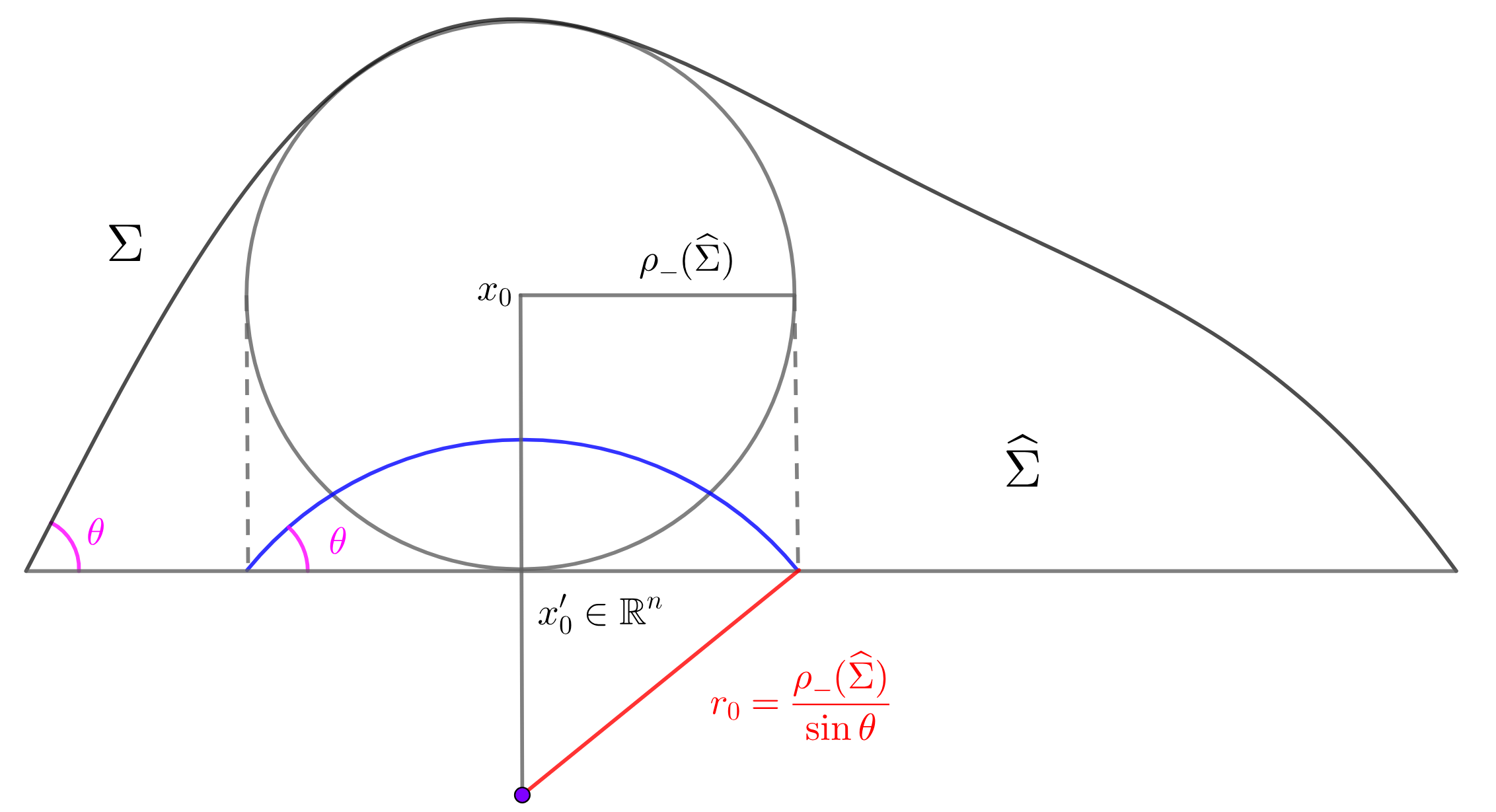}  \caption{Figure 1.}  	
    \label{fig1}  \end{figure}\end{center}

Let us now estimate $\rho_+(\wh\S,\theta)$. Let $B_0$ be a ball of center $x_0$ and radius $\rho_+(\wh\S)$ such that $\wh\S \subset \overline{B_0}$ and let again $x'_0$ be the projection of $x_0$ onto $\p\RR^{n+1}_+$. Since $\wh\S \subset \overline{B_0}$ intersects $\p\RR^{n+1}_+$, the same does $B_0$, which implies $\rho_+(\wh\S) \geq |x_0-x'_0|$. If we now set
$$
R=\frac{2 \rho_+(\wh\S)}{1-\cos \theta},
$$
we see that any $x \in B_0$ satisfies
\begin{eqnarray*}
|x-x_0'+ R \cos \theta E_{n+1}| & \leq & |x-x_0|+|x_0-x_0'| + |R \cos \theta  E_{n+1}| \\
& \leq & \rho_+(\wh\S) + \rho_+(\wh\S) + R \cos \theta = R.
\end{eqnarray*}
This shows that $\overline{B_0} \subset \wh{\C_{R,\theta}(x_0')}$, which in turn implies
\begin{eqnarray}\label{outer rad-est}
    \rho_+(\wh\S,\theta) \leq R = \frac{2}{1-\cos \theta} \rho_+(\wh\S).
\end{eqnarray}

In view of \eqref{inn-radius-est}, \eqref{outer rad-est} and \eqref{pinching est}, the proof of  \eqref{pinching est-cap} is complete.
\end{proof}

\section{A priori estimates}\label{sec3}

In this section, we establish the monotonicity of the capillary isoperimetric ratio $\I_\theta(\wh{\S_t})$ along our flow \eqref{MCF-capillary}, and derive the curvature estimates. This will allow us to prove Theorem \ref{thm1}. Throughout the section, $\S_t$ will be a smooth solution of the flow \eqref{MCF-capillary} with $\theta \in (0,\frac \pi2)$, defined in its maximal time interval $[0,T^*)$.

 \subsection{Monotonicity of capillary isoperimetric ratio}\

 A key  property of our flow \eqref{MCF-capillary} is that the capillary isoperimetric ratio \eqref{cap iso ratio} is monotone non-increasing in time. We remark that this property does not require the convexity of the surfaces and holds for a general $\theta \in (0,\pi)$.
 \begin{prop}\label{cap isoper ratio}
    Along the flow \eqref{MCF-capillary},  the capillary isoperimetric ratio $\I_{\theta}(\wh{\S_t})$  is monotone non-increasing in time $t>0$. Moreover, monotonicity is strict unless $\S_t$ is a spherical cap.
    \end{prop}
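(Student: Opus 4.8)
The plan is to differentiate $\I_\theta(\wh{\S_t}) = W_\theta(\wh{\S_t})^{n+1}/|\wh{\S_t}|^n$ in time, so the first task is to compute the evolution of the capillary area $W_\theta(\wh{\S_t})$ and of the enclosed volume $|\wh{\S_t}|$ along \eqref{MCF-capillary}. Writing the flow in the form \eqref{flow with normal and tangential} with normal speed $\ff = (1-\cos\theta\<\nu,E_{n+1}\>)(\phi - H^\a)$ and tangential part $\T$ as in \eqref{T}, and abbreviating $d\mu_\theta := (1-\cos\theta\<\nu,E_{n+1}\>)\,dA$, I would use Proposition \ref{basic evolution eqs}(1) to write $\frac{d}{dt}|\S_t| = \int_{\S_t}(\ff H + \dive_{\S_t}\T)\,dA = \int_{\S_t}\ff H\,dA + \int_{\p\S_t}\<\T,\mu\>\,ds$, where by \eqref{tangential T} the last integral equals $\int_{\p\S_t}\ff\cot\theta\,ds$; combining this with the first variation of $|\wh{\p\S_t}|$ (again a $\p\S_t$-integral) and forming $\frac{d}{dt}W_\theta = \frac{d}{dt}|\S_t| - \cos\theta\,\frac{d}{dt}|\wh{\p\S_t}|$, the two boundary integrals cancel exactly because of the contact-angle condition and the form \eqref{T}, \eqref{tangential T} of $\T$, leaving
\[
  \frac{d}{dt}\,W_\theta(\wh{\S_t}) = \int_{\S_t} H\,(\phi - H^\a)\,d\mu_\theta, \qquad \frac{d}{dt}\,|\wh{\S_t}| = \int_{\S_t}(\phi - H^\a)\,d\mu_\theta .
\]
(The second identity is immediate once one observes, via \eqref{co-normal} and \eqref{tangential T}, that the velocity of $\p\S_t$ has no $E_{n+1}$-component, so $\wh{\p\S_t}$ contributes nothing to the volume flux.) I expect this boundary bookkeeping to be the only computation requiring real care.

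Substituting the two choices of $\phi$ and using \eqref{equivalent} to write $W_\theta(\wh{\S_t}) = \int_{\S_t}d\mu_\theta$, one checks at once that \eqref{phi-volume pres} gives $\frac{d}{dt}|\wh{\S_t}| = 0$ (the flow is volume preserving) while \eqref{phi-area pres} gives $\frac{d}{dt}W_\theta(\wh{\S_t}) = 0$ (the flow is capillary-area preserving). In either case one arrives at
\[
  \frac{d}{dt}\,\I_\theta(\wh{\S_t}) = \frac{\I_\theta(\wh{\S_t})}{c(t)}\left[\left(\int_{\S_t}H^\a\,d\mu_\theta\right)\!\left(\int_{\S_t}H\,d\mu_\theta\right) - \left(\int_{\S_t}H^{\a+1}\,d\mu_\theta\right)\!\left(\int_{\S_t}d\mu_\theta\right)\right]
\]
for a positive factor $c(t)$, and the bracket equals $-\tfrac12\iint_{\S_t\times\S_t}\big(H(x)^\a - H(y)^\a\big)\big(H(x)-H(y)\big)\,d\mu_\theta(x)\,d\mu_\theta(y)$, which is $\le 0$ since the two differences always carry the same sign (as $\a>0$) and $d\mu_\theta$ is a positive measure. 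Hence $\I_\theta(\wh{\S_t})$ is non-increasing; note that nothing here uses convexity or a sign restriction on $\cos\theta$, because $1-\cos\theta\<\nu,E_{n+1}\> > 0$ for every $\theta\in(0,\pi)$.

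For the strict monotonicity, the same double integral vanishes exactly when $H(x)=H(y)$ for $\mu_\theta\times\mu_\theta$-a.e. pair $(x,y)$; since the density $1-\cos\theta\<\nu,E_{n+1}\>$ of $\mu_\theta$ is strictly positive, this forces $H$ to be constant on all of $\S_t$. A compact embedded capillary hypersurface of $\ol{\RR^{n+1}_+}$ with constant mean curvature and constant contact angle is a spherical cap, by the Alexandrov-type (moving planes) rigidity theorem for capillary hypersurfaces in a half-space (see e.g. \cite{MWW} and the references therein); hence $\frac{d}{dt}\I_\theta(\wh{\S_t}) = 0$ at a given time forces $\S_t$ to be a spherical cap, which yields strict monotonicity in all other cases. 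The main obstacle is thus the derivation of the two first-variation identities with the correct cancellation of the $\p\S_t$-boundary terms; granting those, the statement follows from the Chebyshev-type inequality above together with the capillary Alexandrov theorem.
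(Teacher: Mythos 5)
Your proposal is correct and follows essentially the same route as the paper: compute the first variations of $W_\theta(\wh{\S_t})$ and $|\wh{\S_t}|$ (the paper cites the variational formula of Wang--Weng--Xia rather than rederiving the boundary cancellation), observe that one quantity is conserved for each choice of $\phi$, and control the sign of the other by a Chebyshev-type inequality, with equality forcing $H$ constant and hence a spherical cap by CMC capillary rigidity. Your symmetrized double-integral form of the inequality is just an equivalent rewriting of the paper's factorization $-\int_{\S_t}(H-q(t))(H^\a-q^\a(t))(1-\cos\theta\<\nu,E_{n+1}\>)\,dA$, so the two arguments coincide in substance.
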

\begin{proof}
Let us first look at the case that $\phi(t)$ is chosen as in \eqref{phi-volume pres}. We have
\begin{eqnarray*}
    \frac{ d}{dt} |\widehat{\S_t}|=\int_{\S_t} (\phi-H^\a) (1-\cos\theta \<\nu,E_{n+1}\>) dA=0.
\end{eqnarray*}
Let us denote 
$$q(t):=\frac{  \int_{\S_t} H (1-\cos\theta \<\nu,E_{n+1}\>)dA}{W_{\theta}(\wh{\S_t})}
= \frac{  \int_{\S_t} H (1-\cos\theta \<\nu,E_{n+1}\>)dA} {  \int_{\S_t}(1-\cos\theta \<\nu,E_{n+1}\>)dA},
$$
where we are using \eqref{equivalent}. 
Then the definitions immediately imply the identities
$$
 \phi(t) \int_{\S_t} H (1-\cos\theta \<\nu,E_{n+1}\>)dA=q(t) \int_{\S_t} H^\alpha (1-\cos\theta \<\nu,E_{n+1}\>)dA.
$$
$$
 \int_{\S_t} H (1-\cos\theta \<\nu,E_{n+1}\>)dA - q(t)  \int_{\S_t}(1-\cos\theta \<\nu,E_{n+1}\>)dA =0.
$$
From the first variational formula in \cite[Theorem 2.7]{WWX22}, we deduce, using the two identities above,
\begin{eqnarray*}
    \frac{d}{dt}W_{\theta}(\widehat{\S_t}) &=& \int_{\S_t} (\phi(t)-H^\a) H (1-\cos\theta \<\nu,E_{n+1}\>) dA \\
 & = &    \int_{\S_t} (q(t) H^\a -H^{\a+1}) (1-\cos\theta \<\nu,E_{n+1}\>) dA \\
   & &  +q^\alpha (t)\int_{\S_t} H (1-\cos\theta \<\nu,E_{n+1}\>)dA
   \\ &&- q^{\a+1}(t)  \int_{\S_t}(1-\cos\theta \<\nu,E_{n+1}\>)dA 
 \\&=&-\int_{\S_t} (H-q(t))(H^\a-q^\a(t)) (1-\cos\theta \<\nu,E_{n+1}\>)dA \\&\leq&  0,
\end{eqnarray*}
where the last inequality follows from $\a>0$. Observe in addition that the last inequality is strict unless $H=q(t)$ everywhere om $\S_t$, which means that $\S_t$ is a spherical cap.

If $\phi(t)$ is chosen as in \eqref{phi-area pres}, we define $q(t)$ as above and we obtain this time
$$
\phi(t) \int_{\S_t} (1-\cos\theta \<\nu,E_{n+1}\>)dA=\frac1{q(t)} \int_{\S_t} H^{\alpha+1} (1-\cos\theta \<\nu,E_{n+1}\>)dA.
$$
Using again \cite[Theorem 2.7]{WWX22} we obtain, with computations similar to the previous case,
\begin{eqnarray*}
 \frac{d}{dt}W_{\theta}(\widehat{\S_t}) =  0,
\end{eqnarray*}
and 
\begin{eqnarray*}
    \frac{d}{dt} |\wh{\S_t}|  &=& \int_{\S_t} (\phi-H^\a)  (1-\cos\theta \<\nu,E_{n+1}\>) dA
    \\&=&\frac{1}{q(t)} \int_{\S_t} (H-q(t))(H^\a-q^\a(t)) (1-\cos\theta \<\nu,E_{n+1}\>)dA \\&\geq&  0.
\end{eqnarray*}

In both cases, we conclude that $\I_\theta (\wh{\S_t})$ is monotone non-increasing in $t$.
\end{proof}
 
 \subsection{Curvature estimates}\ 

Now we prove that the convexity is preserved along our flow \eqref{MCF-capillary}.
\begin{prop}\label{convexity pres}
    If $\S_0\subset \ol{\RR^{n+1}_+}$ is a strictly convex capillary hypersurface and $\theta\in(0,\frac \pi 2)$, then the solution $\S_t$ of flow \eqref{MCF-capillary} is strictly convex for all $t\in[0,T^*)$.  
\end{prop}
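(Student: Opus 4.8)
The plan is to propagate the positive definiteness of the Weingarten map $(h^i_j)$ by a tensor maximum principle, handling the interior by Hamilton's argument and the capillary boundary by the boundary tensor maximum principle of Hu--Wei--Yang--Zhou \cite{HWYZ}. The first step is to compute the evolution of $(h^i_j)$ under the linearized operator $\L$ of \eqref{para operator}. Starting from Proposition \ref{basic evolution eqs}(4), with speed $\ff=(1-\cos\theta\langle\nu,E_{n+1}\rangle)(\phi-H^\a)$ and tangential field $\T$ as in \eqref{T}, one expands $\n^i\n_j\ff$ using $\n H^\a=\a H^{\a-1}\n H$, Simons' identity for $\n^i\n_jH$, and the Codazzi-type identity \eqref{delta nu e} for the derivatives of $\langle\nu,E_{n+1}\rangle$. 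The result has the schematic form
\[
\L h^i_j=\n_\T h^i_j+(\text{terms linear in }\n h\text{ and }\n\langle\nu,E_{n+1}\rangle)+\mathcal{Z}^i_j ,
\]
where $\mathcal{Z}^i_j$ is a zeroth-order symmetric tensor, polynomial in $h,H,\phi$ and $\langle\nu,E_{n+1}\rangle$, and $\n_\T h^i_j$ is the genuine transport term.

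The decisive algebraic point is the null-eigenvector inequality: if $v$ satisfies $h^i_jv^j=0$ at a point, then $\mathcal{Z}^i_jv_iv^j\ge0$. Indeed every term of $\mathcal{Z}$ contains a factor of the type $h^k_jh^i_k$, $|h|^2h^i_j$ or $H(h^2)^i_j$: the term $-\ff h^k_jh^i_k$ of Proposition \ref{basic evolution eqs}(4), the Simons terms, and the second derivatives $\n^i\n_j\langle\nu,E_{n+1}\rangle$ (which by \eqref{delta nu e} and the Gauss--Weingarten relations reduce to contractions of this type) all annihilate $v$. In particular the nonlocal term $\phi$, being constant in space, enters $\mathcal{Z}$ only through $-\ff h^k_jh^i_k$ and through $H^\a\n^i\n_j(1-\cos\theta\langle\nu,E_{n+1}\rangle)$, so it causes no difficulty. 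The remaining first-order contributions — among them the $\a(\a-1)H^{\a-2}\n^iH\,\n_jH$ arising from $\n^i\n_jH^\a$, and cross terms of the type $\n(1-\cos\theta\langle\nu,E_{n+1}\rangle)\otimes\n H^\a$ — are absorbed by the standard device for the power mean curvature flow (Hamilton's trick at the spatial minimum of the smallest principal curvature $\k_1$, where $\n\k_1=0$), exactly as in the closed case \cite{Sch02,S15}. This disposes of the interior part.

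The genuinely new difficulty is the boundary, and this is where $\theta\in(0,\frac\pi2)$ is used essentially. At $\p\S_t$ one differentiates the contact angle condition $\langle\nu,E_{n+1}\rangle=\cos\theta$ in directions tangent to $\p\S_t$ and uses \eqref{co-normal} together with the fact \eqref{principal} that $\mu$ is a principal direction, obtaining a Robin-type boundary condition for the second fundamental form: schematically $\n_\mu h_{ij}=c\,h(\mu,\mu)\,h_{ij}+(\text{terms of lower order in }h)$ with $c$ a positive multiple of $\cot\theta$ (these are the boundary identities referred to in the introduction). Since $\cot\theta>0$ for $\theta<\frac\pi2$ and $h(\mu,\mu)>0$ by the convexity being propagated, this Robin condition has the sign required by the boundary tensor maximum principle of \cite{HWYZ}; for $\theta>\frac\pi2$ the coefficient $c$ changes sign and the scheme breaks down, which is precisely why that range is excluded. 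By \eqref{principal} a null eigenvector of $(h^i_j)$ at a boundary point is either $\mu$ or tangent to $\p\S_t$, so both cases are covered.

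Combining the interior and boundary maximum principles in the form of \cite{HWYZ} yields that $(h^i_j)\ge0$ is preserved on $[0,T^*)$; strict positivity then follows from the strong maximum principle and the Hopf boundary lemma applied to the scalar equation for $\k_1$ (or directly to $(h^i_j)$): were $\k_1$ to vanish somewhere at a time $t_0>0$, it would vanish identically on $\S_t$ for all $t\le t_0$, contradicting the strict convexity of $\S_0$. I expect the boundary step — deriving the precise Robin condition for $h_{ij}$ at the capillary boundary with the correct sign and checking that it meets the hypotheses of the Hu--Wei--Yang--Zhou boundary tensor maximum principle — to be the main obstacle, the interior computation being lengthy but routine.
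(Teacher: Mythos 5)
Your overall architecture coincides with the paper's (evolution equation for the second fundamental form, the Hu--Wei--Yang--Zhou boundary tensor maximum principle, the restriction $\theta\in(0,\frac\pi2)$ for the boundary sign), but two key steps are not justified as you state them. First, the interior case $\a\in(0,1)$: contracting with the null eigenvector $e_1$, the dangerous term is $\a(\a-1)(1-\cos\theta\<\nu,E_{n+1}\>)H^{\a-2}(\n_1H)^2$, and Hamilton's trick does not remove it. At the null eigendirection one has $\n_k h_{11}=0$ for all $k$, but $\n_1H=\sum_l\n_1h_{ll}$ involves $\n_1h_{ll}$ for $l\ge 2$ and need not vanish, so the negative term survives. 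What the paper actually uses is the extra $\sup_\La$ gradient term in condition \eqref{tensor MP 1} of Lemma \ref{tensr MP thm}: choosing $\La^l_k=\n_1h_{kl}/h_{ll}$ for $l\ge2$ and applying Cauchy--Schwarz gives $\sup_\La(2\La_k^l\n_kh_{l1}-\La_k^i\La_k^jh_{ij})\ge(\n_1H)^2/H$, so the total coefficient becomes $\a(\a+1)\ge0$. Without this device (or an equivalent substitute) your interior argument does not close for $0<\a<1$; for $\a\ge1$ taking $\La\equiv0$ suffices, as you implicitly do.

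Second, the boundary. The precise identity (from \cite[Proposition 2.4]{WWX22}) is $\n_\mu h_{\a\a}=\cot\theta\,h_{\a\a}\bigl(h(\mu,\mu)-h_{\a\a}\bigr)$, and condition \eqref{tensor MP 2} only has to be verified along null eigenvectors. If the null eigenvector is tangential, then $h_{\a\a}=0$ and the expression vanishes, so your appeal to $h(\mu,\mu)>0$ is not what makes the sign work. The case your "Robin-type" formulation does not cover is the null eigenvector $\xi=\mu$: there $h(\mu,\mu)=0$, so your sign argument is vacuous, and tangential differentiation of the contact-angle condition gives no formula for $\n_\mu h(\mu,\mu)$ at all. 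The paper handles this by first establishing the Neumann condition $\n_\mu H=0$ on $\p\S_t$ (Proposition \ref{evo of H}, obtained by differentiating the angle condition in time along the flow --- a step absent from your proposal), which gives $\n_\mu h_{11}=-\sum_{\a\ge2}\n_\mu h_{\a\a}=\sum_{\a\ge2}\cot\theta\,h_{\a\a}(h_{\a\a}-h_{11})\ge0$ when $h_{11}=0$ is the smallest eigenvalue; here $\cot\theta>0$ enters, exactly as in \eqref{neuman1}--\eqref{neuman2}. Finally, a minor point: Lemma \ref{tensr MP thm} already yields $h>0$ for $t>0$ from $h(0)>0$, so your concluding strong maximum principle/Hopf argument for strictness is unnecessary.
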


To demonstrate Proposition \ref{convexity pres}, we begin by recalling the following tensor maximum principle from \cite[Theorem 1.2]{HWYZ},  which is a refinement of the result by Stahl \cite[Theorem 3.3]{Stahl}. Both can be regarded as the boundary counterpart of the tensor maximum principle on a compact manifold established by Hamilton \cite[Theorem 9.1]{Ham} and further refinement by Andrews \cite[Theorem 3.2]{A07}.

\begin{lem}[\cite{HWYZ}]\label{tensr MP thm} Let $N$ be a compact manifold with boundary and $g$ a time-dependent metric. Let  $S(t)$ be a smooth symmetric $(2,0)$-tensor field satisfying
	\begin{eqnarray*} 
	\frac{\partial }{\partial t}S =A^{ij}\nabla_i\nabla_j S+ B^k\nabla_k S+\beta, 
	\end{eqnarray*}where $(A^{ij})>0$ and $B^k$ are smooth. Suppose  $\beta(t)=\beta(S(t),g(t),t)$ is a symmetric $(2,0)$ tensor which satisfies 
	\begin{eqnarray}\label{tensor MP 1}
 \beta(\xi,\xi)+\sup\limits_{\Lambda}  2A^{kl}(2\Lambda^p_k \nabla_l S_{ip} \xi^i-\Lambda^p_k\Lambda^q_l S_{pq}) \geq 0, ~~  	\text{ in }  N\times (0,T],
\end{eqnarray}  and 
\begin{eqnarray}\label{tensor MP 2}
(\n_\mu S_{ij})\xi^i\xi^j \geq 0, ~~ \text{ on } \p N\times (0,T] ,  
	\end{eqnarray}for $\xi$ being a null eigenvector of $S$, i.e. $S_{ij}\xi^j=0$ and $\mu$ being the outward unit normal vector to $\partial N$. If $S(0)> 0$ on $N$, then  $S(t)>0$ on $N$ for all $0< t\leq T$.
\end{lem}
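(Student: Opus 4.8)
The plan is to follow Hamilton's tensor maximum principle \cite{Ham}, in the refinement of Andrews \cite{A07} that accommodates the $\sup_\Lambda$ term in \eqref{tensor MP 1}, together with the boundary adaptation of Stahl \cite{Stahl} --- this being exactly the scheme of \cite{HWYZ}. First I would reduce to a first-time-of-failure argument: since $N$ is compact and $S(0)>0$, the time $t_0:=\sup\{\tau\in[0,T]:S(t)>0 \text{ on } N \text{ for all } t\in[0,\tau]\}$ is positive, and it suffices to rule out $t_0<T$, or $t_0=T$ with $S(T)\not>0$. In that bad case $S(t_0)$ is positive semi-definite on all of $N$ and has a zero eigenvalue at some $x_0\in N$; fix a unit null eigenvector $\xi\in T_{x_0}N$, so that $S(x_0,t_0)(\cdot,\xi)=0$. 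To turn the derivative inequalities below into a strict contradiction I would first replace $S$ by $S-\delta e^{-Ct}g$ with $\delta>0$ small and $C$ large, the constant $C$ chosen using the uniform bound on $\partial_t g$ over $N\times[0,T]$ and the continuity of $\beta$ in $S$, so that the new reaction term dominates $\beta$ by a positive-definite tensor; it then suffices to prove the conclusion for this perturbed tensor, and one may treat \eqref{tensor MP 1}--\eqref{tensor MP 2} as strict.

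\emph{Interior point.} If $x_0\in\mathrm{int}(N)$, I would extend $\xi$ to a time-independent vector field $V$ near $x_0$ with $V(x_0)=\xi$ and $\nabla V(x_0)=\Lambda$ a matrix left at my disposal, and set $\varphi(x,t):=S(x,t)(V(x),V(x))$. Because $S(t_0)\ge0$ on $N$ and $S(t)>0$ for $t<t_0$, the function $\varphi$ has a local minimum equal to $0$ at $(x_0,t_0)$ over $N\times[0,t_0]$, whence $\partial_t\varphi\le0$, $\nabla\varphi=0$ and $\nabla^2\varphi\ge0$ there. Substituting the evolution equation for $S$ into these and using $S(x_0,t_0)(\cdot,\xi)=0$ --- which kills every term containing $\nabla^2 V$ and, via $\nabla\varphi=0$, forces $(\nabla_k S)(\xi,\xi)=0$ so that the $B^k\nabla_k S$ term disappears --- I would arrive, at $(x_0,t_0)$, at
\begin{equation*}
\beta(\xi,\xi)\ \le\ 4A^{kl}\Lambda^p_k\nabla_l S_{ip}\xi^i+2A^{kl}\Lambda^p_k\Lambda^q_l S_{pq}.
\end{equation*}
Since $\Lambda$ is free I take the infimum of the right-hand side over $\Lambda$; it equals $-\sup_\Lambda 2A^{kl}(2\Lambda^p_k\nabla_l S_{ip}\xi^i-\Lambda^p_k\Lambda^q_l S_{pq})$, and the resulting inequality $\beta(\xi,\xi)+\sup_\Lambda(\cdots)\le0$ contradicts \eqref{tensor MP 1} (strictly, after the perturbation). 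So no zero eigenvalue of $S$ can first appear at an interior point.

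\emph{Boundary point.} If $x_0\in\partial N$, I would choose $V$ as above but with the extra requirement $\nabla_\mu V(x_0)=0$, so that $\nabla_\mu\varphi(x_0,t_0)=(\nabla_\mu S)(\xi,\xi)\ge0$ by \eqref{tensor MP 2}; on the other hand, $(x_0,t_0)$ being a minimum of $\varphi$ over $\overline N\times[0,t_0]$ and $\mu$ pointing out of $N$ force $\nabla_\mu\varphi(x_0,t_0)\le0$, hence $\nabla_\mu\varphi(x_0,t_0)=0$. Combining this with the minimality of $\varphi$ in directions tangent to $\partial N$ and the computation above produces a contradiction; in the borderline case where the inequalities degenerate to equalities, one appeals to a parabolic Hopf boundary point lemma --- $\varphi$ being a supersolution of a scalar uniformly parabolic equation in a one-sided neighborhood of $(x_0,t_0)$ (the lower-order and nonlinear terms absorbed thanks to $\varphi\ge0$, $\nabla\varphi(x_0,t_0)=0$ and \eqref{tensor MP 2}), vanishing at $(x_0,t_0)$ together with its outward conormal derivative, hence $\equiv0$ near $(x_0,t_0)$ for $t\le t_0$, which contradicts $\varphi>0$ for $t<t_0$.

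Putting the two cases together gives $t_0=T$ and $S(t)>0$ on $N$ for all $0<t\le T$; letting $\delta\to0$ (retaining the bound $S\ge\delta e^{-Ct}g$) removes the perturbation and completes the proof. The main obstacle is the boundary point: one must verify that the test function $\varphi$ genuinely obeys a scalar parabolic differential inequality up to $\partial N$ despite the non-scalar, nonlinear structure of $\beta$, and control the interaction of the null eigenvector $\xi$ with the geometry of the boundary --- this is precisely where the Neumann-type condition \eqref{tensor MP 2} is indispensable, and where the argument goes beyond the closed-manifold case of \cite{Ham,A07}.
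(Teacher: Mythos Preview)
The paper does not prove this lemma at all: it is quoted verbatim from \cite[Theorem 1.2]{HWYZ} (as indicated by the attribution in the lemma header) and is simply invoked as a black box in the proof of Proposition~\ref{convexity pres}. Your sketch is a correct outline of the standard proof --- Hamilton's first-time-of-failure argument with the $\varepsilon$-perturbation, Andrews' refinement via the free matrix $\Lambda$ in the extension of the null eigenvector, and the Stahl/Hopf boundary treatment using \eqref{tensor MP 2} --- and this is precisely the scheme carried out in \cite{HWYZ}, so there is nothing to compare against in the present paper.
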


\begin{proof}[\textbf{Proof of Proposition \ref{convexity pres}}]
From Proposition \ref{basic evolution eqs} (3), we have
\begin{eqnarray*}
    \p_t h_{ij} &=& -\n_i\n_j (f(1-\cos\theta \<\nu,E_{n+1}\>) )+f(1-\cos\theta \<\nu,E_{n+1}\>) (h^2)_{ij} \\&& +\n_\T h_{ij}+h_j^k \n_i \T_k +h_i^k \n_j \T_k
    \\&=& (1-\cos\theta \<\nu,E_{n+1}\>) \n_i\n_j H^\a +\n_i  H^\a \n_j(1-\cos\theta \<\nu,E_{n+1}\>) \\&& +\n_j H^\a \n_i(1-\cos\theta \<\nu,E_{n+1}\>)-(\phi-H^\a) \n_{ij}(1-\cos\theta \<\nu,E_{n+1}\>)
    \\&& +(1-\cos\theta \<\nu,E_{n+1}\>)(\phi-H^\a) (h^2)_{ij}+ \n_\T h_{ij}+h_j^k \n_i \T_k +h_i^k \n_j \T_k
    \\&=& \a (1-\cos\theta \<\nu,E_{n+1}\>) (H^{\a-1}H_{;ij}+(\a-1)H^{\a-2}H_{;i}H_{;j}) 
    \\&& -\a \cos\theta H^{\a-1} ( H_{;i} h_{jk}\<e_k,E_{n+1}\> +H_{;j} h_{ik} \<e_k,E_{n+1}\>) 
    \\&&  +(\phi-H^\a) \cos\theta (\<\n h_{ij},E_{n+1}\>-(h^2)_{ij}\<\nu,E_{n+1}\>) 
     \\&& +(1-\cos\theta \<\nu,E_{n+1}\>) (\phi-H^\a) (h^2)_{ij}+ \n_\T h_{ij}+h_j^k \n_i \T_k +h_i^k \n_j \T_k.
     \end{eqnarray*}
We recall the Simons-type identity, see e.g. \cite[Lemma 2.1]{Hui84}.
\begin{eqnarray*}
H_{ ;ij}=\Delta h_{ij }-H(h^2)_{ij}+|h|^2h_{ij}.
\end{eqnarray*}
Then we derive
\begin{eqnarray*}
\p_t h_{ij}&=&  \a(1-\cos\theta \<\nu,E_{n+1}\>) H^{\a-1} \Delta h_{ij} +(\phi-H^\a)\cos\theta \<\n h_{ij},E_{n+1}\>
\\ & & +\n_\T h_{ij} + \a(\a-1)(1-\cos\theta \<\nu,E_{n+1}\>) H^{\a-2} H_{;i}H_{;j}
\\ & & -\a \cos\theta H^{\a-1} ( H_{;i} h_{jk}\<e_k,E_{n+1}\>   +H_{;j} h_{ik} \<e_k,E_{n+1}\>) 
\\&& +(1-2\cos\theta \<\nu,E_{n+1}\>) (\phi-H^\a) (h^2)_{ij} +h_j^k \n_i \T_k +h_i^k \n_j \T_k \\&&  
 + \a(1-\cos\theta \<\nu,E_{n+1}\>)H^{\a-1}( |h|^2h_{ij} -H (h^2)_{ij}).
\end{eqnarray*}

To apply Lemma \ref{tensr MP thm}, we first check the boundary condition \eqref{tensor MP 2}. Assume that $h_{ij}\geq 0$ and $h_{ij}\xi^j=0$ holds at the point $p_0\in \p M$. We choose an orthonormal frame $\{e_i\}_{i=1}^n$ around $p_0$ such that $e_1=\mu$ is the conormal of $\p M$ in $M$. Recalling \eqref{principal}, we can choose our frame in such a way that one of the following holds: either $\xi=e_1$ or $\xi=e_\a\in T(\p \S_t)$ for some $2\leq \a \leq n$.
\begin{enumerate}
    \item If $\xi=\mu$, we know $h_{\a\a}\geq h_{11}$ for all $2\leq \a \leq n$. From \eqref{neuman of H}, we know
    \begin{eqnarray*}
        \n_\mu h_{11}=-\sum_{\a=2}^n \n_\mu h_{\a\a}.
    \end{eqnarray*}
    By applying \cite[Proposition 2.4]{WWX22} and using $\theta\in(0,\frac \pi2 )$, we obtain
    \begin{eqnarray}\label{neuman1}
        (\n_\mu h_{ij})\xi^i\xi^j=\n_\mu h_{11}=-\sum_{\a=2}^n \cot\theta \, h_{\a\a} (h_{11}-h_{\a\a}) \geq 0.
    \end{eqnarray}

    \item If $\xi=e_\a\in T(\p\S_t)$ for some $2\leq \a \leq n$, then $h_{11}\geq h_{\a\a}$, we use again \cite[Proposition 2.4]{WWX22} and $\theta\in(0,\frac \pi 2)$ to deduce that 
    \begin{eqnarray}\label{neuman2}
        (\n_\mu h_{ij})\xi^i\xi^j =\n_\mu h_{\a\a} =\cot\theta \, h_{\a\a} (h_{11}-h_{\a\a}) \geq 0.
    \end{eqnarray}
\end{enumerate}

Next, we check condition \eqref{tensor MP 1}. We assume that $\xi$ is a null eigenvector of $(h_{ij})\geq 0$, i.e. $h_{ij}\xi^j=0$ at the point $(p_0,t_0)$ with $p_0\in M\setminus \p M$. We choose an orthonormal frame $\{e_i\}_{i=1}^n$ around $p_0$ such that $(h_{ij})$ is diagonal and $\xi=e_1$,  then $h_{11}=0$ and $(h^2)_{11}=\sum\limits_{k=1}^n h_1^k h_k^1=0$. Then many terms in the evolution equation for $h_{ij}$ vanish and we find that condition \eqref{tensor MP 1} in our case takes the following form
\begin{eqnarray}\label{Q}
    Q&:=&\a(\a-1) (1-\cos\theta \<\nu,E_{n+1}\>)  H^{\a-2} (\n_1H)^2  \notag \\&& +2\a(1-\cos\theta \<\nu,E_{n+1}\>)  H^{\a-1} \sup_\Lambda (2\La_k^l \n_k h_{l1}-\La_k^i\La_k^j h_{ij})\\& \geq & 0. \notag 
\end{eqnarray}
Here we no longer distinguish between upper and lower indices since we are working in an orthonormal frame.

If $\a\geq 1$, it is easy to see that $Q\geq 0$, by choosing $\La\equiv 0$ in \eqref{Q}.
Hence we only need to show \eqref{Q} for  $\a\in(0,1)$.
 By continuity, we can assume that $(h_{ij})$ has all eigenvalues $\{\k_i\}_{i=1}^n$ distinct and in increasing order at $(p_0,t_0)$ as $\k_1<\k_2<\cdots< \k_n$. From the minimality of $\k_1=h_{11}$ (cf. \cite[Lemma 4.1]{HWYZ}, we have that $0=\n_kh_{11}$ for all $1\leq k\leq n$.  By choosing $\La_k^l=\frac{\n_1 h_{kl}}{h_{ll}}$ for  $2\leq l\leq n, 1\leq k\leq n$ and $\Lambda^1_k\equiv 0$ for $1\leq k\leq n$, we have

\begin{eqnarray*}
&& 2\La_k^l \n_1 h^k_l-\La_k^i\La^{jk} h_{ij} \\&= &
\sum_{l=2}^n  \sum_{k=1}^n \left(2 \frac{\n_1 h_{kl}}{h_{ll}} \n_1h_{kl}-\left(\frac{\n_1 h_{kl}}{h_{ll}} \right)^2h_{ll} \right)
\\&=&\sum_{l=2}^n   \sum_{k=1}^n  \frac{(\n_1h_{kl})^2}{h_{ll}} 
\geq \sum_{l=2}^n    \frac{(\n_1h_{ll})^2}{h_{ll}} \geq  \frac{(\n_1 H)^2}{H},
\end{eqnarray*}where in the last step we have used the Cauchy-Schwarz inequality
\begin{eqnarray*}
&& |\n_1 H | = \left| \sum_{l=2}^n 
 \frac{ \n_1h_{ll}} {\sqrt{h_{ll}}} \, \sqrt{h_{ll}}  \, \right| \leq
\left( \sum_{l=2}^n    \frac{(\n_1h_{ll})^2}{h_{ll}} \right)^{1/2} H^{1/2} .
\end{eqnarray*}
Substituting into \eqref{Q},  it implies
\begin{eqnarray*}
    Q &\geq & \a (1-\cos\theta \<\nu,E_{n+1}\>) H^{\a-2} \left[ (\a -1) (\n_1 H)^2 +  2{(\n_1 H)^2}\right]
   \\&=&  \a(\a+1) (1-\cos\theta \<\nu,E_{n+1}\>) H^{\a-2} (\n_1 H)^2 \geq 0,
\end{eqnarray*}
that is, \eqref{Q} holds also in this case.
In summary, the theorem follows by using Lemma \ref{tensr MP thm}.
\end{proof}

\begin{prop}\label{bound outer-inner radius}
    Along the flow \eqref{MCF-capillary}, if $\theta\in(0,\frac {\pi} 2)$, the enclosed volume $|\wh{\S_t}|$, the capillary area $W_{\theta}(\wh{\S_t})$, the capillary outer radius $\rho_+(\wh{\S_t},\theta)$ and the capillary inner radius $\rho_-(\wh{\S_t},\theta)$ are uniformly bounded from above and below by positive constants. In particular, there exist positive constants $R_0$ and $r_0$, depending only on the initial datum, such that
    \begin{eqnarray}\label{inn-out radius}
        0<r_0\leq \rho_-(\wh{\S_t},\theta)\leq \rho_+(\wh{\S_t},\theta) \leq R_0.
    \end{eqnarray}
   
\end{prop}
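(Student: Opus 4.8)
The strategy is to combine four ingredients already at our disposal: the monotonicity of the capillary isoperimetric ratio (Proposition \ref{cap isoper ratio}), the preservation of convexity (Proposition \ref{convexity pres}), the capillary pinching estimate (Proposition \ref{cap pinching est}), and the capillary isoperimetric inequality \eqref{capillary iso ineq}.

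\emph{Step 1: two-sided bounds on $|\wh{\S_t}|$ and $W_{\theta}(\wh{\S_t})$.} Suppose first that $\phi$ is chosen as in \eqref{phi-volume pres}. Then $|\wh{\S_t}|\equiv|\wh{\S_0}|$ by construction, while the computation carried out in the proof of Proposition \ref{cap isoper ratio} gives $\frac{d}{dt}W_{\theta}(\wh{\S_t})\le 0$, hence $W_{\theta}(\wh{\S_t})\le W_{\theta}(\wh{\S_0})$. For a positive lower bound on $W_{\theta}$ we invoke \eqref{capillary iso ineq}: since $W_{\theta}(\wh{\S_t})^{n+1}=\I_{\theta}(\wh{\S_t})\,|\wh{\S_t}|^{n}\ge \I_{\theta}(\wh{\C_{\theta}})\,|\wh{\S_0}|^{n}>0$. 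When instead $\phi$ is chosen as in \eqref{phi-area pres}, the roles are exchanged: $W_{\theta}(\wh{\S_t})\equiv W_{\theta}(\wh{\S_0})$, the same computation gives $\frac{d}{dt}|\wh{\S_t}|\ge 0$, so $|\wh{\S_t}|\ge|\wh{\S_0}|>0$, and \eqref{capillary iso ineq} yields $|\wh{\S_t}|^{n}=W_{\theta}(\wh{\S_0})^{n+1}/\I_{\theta}(\wh{\S_t})\le W_{\theta}(\wh{\S_0})^{n+1}/\I_{\theta}(\wh{\C_{\theta}})$. In both cases $|\wh{\S_t}|$ and $W_{\theta}(\wh{\S_t})$ remain between two positive constants depending only on $\S_0$.

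\emph{Step 2: two-sided bounds on the capillary radii.} By Proposition \ref{cap isoper ratio}, $\I_{\theta}(\wh{\S_t})\le\I_{\theta}(\wh{\S_0})=:c_1$ for all $t\in[0,T^*)$, and by Proposition \ref{convexity pres} each $\S_t$ is convex. Hence Proposition \ref{cap pinching est} furnishes a constant $c=c(n,c_1,\theta)$ with $\rho_{+}(\wh{\S_t},\theta)\le c\,\rho_{-}(\wh{\S_t},\theta)$ for all $t$. To turn this into absolute bounds, note that by the definitions \eqref{cap inner rad}, \eqref{cap outer rad} there are points $x,y\in\p\RR^{n+1}_+$ with $\wh{\C_{\rho_{-},\theta}(y)}\subset\wh{\S_t}\subset\wh{\C_{\rho_{+},\theta}(x)}$, and since $|\wh{\C_{r,\theta}}|=r^{n+1}\,|\wh{\C_{\theta}}|$ (by rescaling) this gives
\begin{eqnarray*}
\rho_{-}(\wh{\S_t},\theta)^{n+1}\,|\wh{\C_{\theta}}|\ \le\ |\wh{\S_t}|\ \le\ \rho_{+}(\wh{\S_t},\theta)^{n+1}\,|\wh{\C_{\theta}}| .
\end{eqnarray*}
Combining the left inequality with the upper bound on $|\wh{\S_t}|$ from Step 1 and then with the pinching estimate bounds $\rho_{+}(\wh{\S_t},\theta)$ from above; combining the right inequality with the lower bound on $|\wh{\S_t}|$ and then with the pinching estimate bounds $\rho_{-}(\wh{\S_t},\theta)$ from below. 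Since $\rho_{-}\le\rho_{+}$ trivially, this yields \eqref{inn-out radius} with $r_0,R_0$ depending only on $\S_0$.

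The argument is essentially bookkeeping once the three cited propositions are in hand, so there is no genuine obstacle here; the only point requiring a small verification is the monotonicity of $W_{\theta}(\wh{\S_t})$ (resp. of $|\wh{\S_t}|$) in the volume-preserving (resp. area-preserving) case, which is read off directly from the computation in the proof of Proposition \ref{cap isoper ratio}. We emphasize that this proposition says nothing about the \emph{position} of $\S_t$ inside $\ol{\RR^{n+1}_+}$, since the centers $x,y$ of the enclosing and enclosed caps may drift along $\p\RR^{n+1}_+$; controlling the position will be done separately by an Alexandrov reflection argument.
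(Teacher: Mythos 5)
Your proposal is correct and follows essentially the same route as the paper: monotonicity of $\I_\theta$ plus the capillary isoperimetric inequality give two-sided bounds on $|\wh{\S_t}|$ and $W_\theta(\wh{\S_t})$, and then the volume comparison with the inner and outer capillary caps combined with the pinching estimate of Proposition \ref{cap pinching est} (applicable thanks to preserved convexity) yields \eqref{inn-out radius}. Your closing remark that the centers of the caps may still drift, so that the position of $\S_t$ must be controlled separately by reflection, is exactly the point the paper addresses afterwards in Lemma \ref{enclosed cap}.
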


\begin{proof}
From Proposition \ref{cap isoper ratio} and \eqref{capillary iso ineq}, we have
\begin{eqnarray}\label{ratio bdds}
 \I_\theta(\wh{\C_\theta})\leq \I_\theta(\wh{\S_t}) \leq \I_\theta(\wh{\S_0}),
\end{eqnarray}for all $t>0$.

If $\phi(t)$ is chosen as  in \eqref{phi-volume pres}, we have  $$|\wh{\S_t}|=|\wh{\S_0}|,$$ for all $t\geq 0$, then the uniform bounds on the capillary area $W_{\theta}(\wh{\S_t})$ follow from \eqref{ratio bdds}. In addition, from  \eqref{ratio bdds} and Proposition \ref{cap pinching est} we deduce
\begin{eqnarray*}
    |\wh{\S_t}| &\leq & |  \wh{ \C_{\rho_+({\wh{\S_t}},\theta),\theta}}| =(\rho_+({\wh{\S_t},\theta} )) ^{n+1} |\wh{\C_\theta}| \\& \leq  & (c\rho_{-}({\wh{\S_t},\theta}) )^{n+1} |\wh{\C_\theta}| \leq c^{n+1}|\wh{\S_t}|.
\end{eqnarray*} 
Since $|\wh{\S_t}|$ is constant in time, this implies the uniform bounds on $\rho_{\pm}({\wh{\S_t}},\theta)$.

In the case that $\phi(t)$ is chosen as in \eqref{phi-area pres}, we know that
\begin{eqnarray*}
    W_{\theta}(\wh{\S_t})=W_{\theta}(\wh{\S_0}), 
\end{eqnarray*} for all $t\geq 0$. From  Proposition \ref{cap isoper ratio} and \eqref{ratio bdds}, we have
$$
    |\wh{\S_0}| \leq  |\wh{\S_t}| =\I_\theta(\wh{\S_t})^{-\frac 1 n} W_{\theta}(\wh{\S_t})^{\frac {n+1} n}
    \leq   \I_\theta(\wh{\C_\theta})^{-\frac 1 n} W_{\theta}(\wh{\S_0})^{\frac {n+1} n},
$$
showing that $|\wh{\S_t}|$ is uniformly bounded. The remaining quantities can be estimated similarly as above.
\end{proof}

Proposition \ref{bound outer-inner radius} implies the existence of a capillary spherical cap with radius $r_0$ contained within $\wh{\S_t}$ for any $t\in[0, T^*)$. The center of the cap, however, may in principle depend on $t$. 
To use Tso's method \cite{Tso}  for obtaining an upper bound on the mean curvature, it is necessary to establish the existence of a cap with a fixed center that remains inside $\wh{\S_t}$ over a suitable uniform time interval. This is done in the next lemma, by adapting a technique from \cite{And01,Mc04}.

 \begin{lem}\label{lower bdd bar u}
 Let $\S_t$ be the solution of \eqref{MCF-capillary}. For any 
 $s \geq 0$, let $z_*\in \wh{\S_s} \cap \p \RR^{n+1}_+$ be a point such that $\wh{\C_{r_0,\theta}(z_*)} \subset \wh{\S_s} $, with $r_0$ as in \eqref{inn-out radius}. Then there exists some $\tau_0>0$, which only depends on the initial datum, such that $\S_t$ satisfies
     \begin{eqnarray}\label{inner cap enclose}
        \wh{\C_{\frac{r_0}2,\theta}(z_*)} \subset   \wh{\S_t} 
     \end{eqnarray}for all $t\in [s,\min\{s+\tau_0,T^*\})$.
     
\end{lem}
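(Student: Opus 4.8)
The idea is to show that the capillary support function $\bar u$ of $\S_t$, computed with respect to the fixed reference point $z:=z_*-r_0\cos\theta\, E_{n+1}$ (the center of the ball associated with the cap $\C_{r_0,\theta}(z_*)$), stays bounded below by $r_0/2$ for a uniform amount of time. Note that at time $s$ the hypothesis $\wh{\C_{r_0,\theta}(z_*)}\subset\wh{\S_s}$ together with the convexity of $\S_s$ (Proposition \ref{convexity pres}) gives $\bar u(\cdot,s)\geq r_0$ on $\S_s$; indeed for a convex capillary hypersurface enclosing the cap $\C_{r_0,\theta}(z_*)$ one has $\langle X-z,\nu\rangle\geq r_0(1-\cos\theta\langle\nu,E_{n+1}\rangle)$ pointwise, which is exactly $\bar u\geq r_0$. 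So the claim \eqref{inner cap enclose} will follow once we establish $\bar u\geq r_0/2$ on $[s,\min\{s+\tau_0,T^*\})$, since $\bar u\geq r_0/2$ is equivalent to $\wh{\C_{r_0/2,\theta}(z_*)}\subset\wh{\S_t}$ again by convexity.

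First I would recall the evolution equation \eqref{evo of bar u} for $\bar u$ and the Neumann condition \eqref{neumann of bar u}, $\n_\mu\bar u=0$ on $\p\S_t$. Since $H>0$ and $|h|^2\geq 0$, the term $\a\bar u H^{\a-1}|h|^2$ is nonnegative wherever $\bar u\geq 0$, hence harmless for a lower bound; the gradient term $2\a H^{\a-1}\langle\n\bar u,\n(1-\cos\theta\langle\nu,E_{n+1}\rangle)\rangle$ is a first-order term in $\bar u$ that disappears at an interior spatial minimum. Thus at a point where $\bar u$ attains a (sufficiently small, say positive) spatial minimum, one gets $\L\bar u\geq \phi-(1+\a)H^\a$. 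The only genuinely bad term is $-(1+\a)H^\a$, and here is where I would use the bounds already established: by Proposition \ref{bound outer-inner radius} the capillary inner and outer radii of $\S_t$ are uniformly bounded, so $\phi(t)$ — a weighted average of $H^\a$ (resp. a ratio of weighted integrals of $H^{\a+1}$ and $H$) — is bounded above by a uniform constant $C_0$ depending only on the initial data, using that the enclosed domain contains a fixed-size cap and is contained in a fixed-size cap (a standard comparison bounding $\inf_{\S_t}H$ and hence $\phi$). Actually for the lower bound argument I do not need an upper bound on $H$; I only need the crude inequality $\p_t\bar u\geq \phi-(1+\a)H^\a\geq -(1+\a)H^\a$ at an interior minimum, together with control of how fast $H^\a$ can be large there — but a cleaner route avoids pointwise control of $H$ entirely: combine $\p_t(\text{min}\,\bar u)\geq \phi-(1+\a)H^\a$ with the fact that where $\bar u$ is close to its minimum and small, $H$ cannot be too large because of an a priori comparison.

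The cleanest implementation is the following. Set $\psi(t):=\min_{\S_t}\bar u$, which is Lipschitz in $t$ and, by the boundary condition \eqref{neumann of bar u}, its evolution is governed by the interior parabolic equation even if the minimum is attained on $\p\S_t$ (this is precisely the role of Lemma \ref{tensr MP thm}-type boundary arguments / Hopf lemma: $\n_\mu\bar u=0$ forbids a spurious boundary minimum from decreasing $\psi$). At an interior minimum point $p_t$ the Laplacian term is $\geq 0$ and the gradient term vanishes, so $\psi'(t)\geq \phi(t)-(1+\a)H^\a(p_t)$. Now I invoke a uniform upper bound: as long as $\wh{\C_{r_0/2,\theta}(z_*)}\subset\wh{\S_t}$ (which holds at $t=s$ and, by continuity, on a maximal subinterval), the domain $\wh{\S_t}$ contains a cap of fixed size, so by the comparison/avoidance-type estimate for the distance from $X(p_t)$ to $z$ we have $\bar u(p_t)\leq \rho_+(\wh{\S_t},\theta)\leq R_0$, and more importantly the mean curvature at the minimum of $\bar u$ is controlled: $H^\a(p_t)\leq C_1$ for a constant depending only on $r_0$, $R_0$, $\a$, $n$ — this is exactly Tso's pointwise estimate applied at the point realizing $\min\bar u$, or alternatively a direct barrier argument using the inner cap. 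Hence $\psi'(t)\geq -C_2$ for a uniform constant $C_2$, so $\psi(t)\geq \psi(s)-C_2(t-s)\geq r_0-C_2(t-s)$, and choosing $\tau_0:=\frac{r_0}{2C_2}$ yields $\psi(t)\geq r_0/2$, i.e. \eqref{inner cap enclose}, on $[s,\min\{s+\tau_0,T^*\})$.

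\textbf{Main obstacle.} The delicate point is justifying that $\psi(t)=\min\bar u$ cannot decrease through a boundary minimum: one must combine the Neumann condition $\n_\mu\bar u=0$ with the structure of \eqref{evo of bar u} to rule out, via a Hopf-type lemma on the boundary, that the minimum of $\bar u$ migrates to $\p\S_t$ and decreases there. Equally delicate is obtaining the uniform bound on $H^\a$ at the minimizing point of $\bar u$ without already knowing a global curvature bound — this requires running a localized Tso-type argument (as will be done more systematically in the next subsection) or, more economically, a direct comparison with the shrinking cap $\C_{\frac{r_0}{2},\theta}(z_*)$ treated as a barrier from inside, which is licit because at $t=s$ the barrier is strictly inside and, for the finitely many issues at the boundary $\p\RR^{n+1}_+$, the contact angle is the same constant $\theta$ for both. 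I expect the writing to hinge on packaging these two points cleanly, perhaps by invoking the $\bar u$-evolution together with the observation that $\phi(t)\geq 0$ and $\phi(t)\leq C\bigl(\max_{\S_t}H\bigr)^{\a}$, and then using the inner-cap comparison to bound the relevant curvature where it matters.
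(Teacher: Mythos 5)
Your overall strategy (track a minimal quantity in time, use convexity and $\phi>0$, and close with an ODE-in-time comparison) is the right one, and in fact the paper's own proof is exactly the ``more economical'' alternative you only sketch at the end: it compares $\S_t$ with a cap of shrinking radius $r(t)$ solving $r'=-\left(n/r\right)^{\a}$ centered at $z_*-r(t)\cos\theta E_{n+1}$, and at the first touching time shows $H\le n/r$ at the contact point, so that $\phi-H^\a>-H^\a+(n/r)^\a\ge 0$ there, a contradiction. The gap in your write-up is precisely the step everything hinges on: the uniform bound $H^\a(p_t)\le C_1$ at the point realizing $\min\bar u$. As stated it is unjustified, and one of the two justifications you offer is circular: Tso's estimate is Proposition \ref{H upper bdd prop}, which is proved \emph{after} and \emph{by means of} this lemma (it needs the lower bound $\bar u\ge r_0/2$ on a uniform time interval). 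The other justification (``a direct barrier argument using the inner cap'') is the paper's proof, but you do not carry it out; in particular you do not treat the case in which the contact/minimum point lies on $\p M$, which occupies half of the paper's argument and genuinely uses the capillary structure: the principal-direction property \eqref{principal}, the relation $\kappa_\a=\sin\theta\,\widehat{\kappa}_\a$ between the curvatures of $\p\S_t$ in $\p\RR^{n+1}_+$ and of $\S_t$, tangency of an $n$-dimensional ball inside $\p\RR^{n+1}_+$, and the identity $\wt\nu=(X-z_*)/r$ at such a point. Appealing to the Hopf lemma to ``forbid'' boundary minima does not substitute for this: the Neumann condition \eqref{neumann of bar u} lets the minimum sit on the boundary, and you would still need the tangency and curvature comparison there. (For the record, your claim can be rescued in the interior case: at a spatial minimum of $\bar u$ on a strictly convex hypersurface one has $\n\bar u=0$, hence $X-z_*=\bar u\,\wt\nu$, and the cap of radius $\min\bar u$ over $z_*$ is internally tangent, giving $H\le n/\min\bar u$; but this must be proved, also at boundary minima, and it is essentially the computation the paper performs at the first touching time.)

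Two further, more minor points. First, your reference point is slightly off: the capillary support function must be taken with respect to $z_*\in\p\RR^{n+1}_+$ itself (as in \eqref{relative support} and in the proof of Proposition \ref{H upper bdd prop}), not with respect to $z_*-r_0\cos\theta E_{n+1}$; with the latter choice the asserted equivalences ``$\bar u\ge r_0$ at $t=s$'' and ``$\bar u\ge r_0/2\iff\wh{\C_{\frac{r_0}{2},\theta}(z_*)}\subset\wh{\S_t}$'' are not exact, since the ball center associated with a cap over $z_*$ moves with the radius. Second, the parenthetical claim that $\phi(t)$ is bounded above at this stage ``by a standard comparison bounding $\inf_{\S_t}H$'' is not available: the paper only bounds $\phi$ (Proposition \ref{phi bounds}) after the upper curvature bound, and for $\a>1$ an outer-radius bound alone does not control the weighted average of $H^\a$. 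Fortunately your final argument only uses $\phi>0$, which does follow from convexity, so this does not affect the structure — but the curvature bound at the minimum point, including the boundary case, must be supplied before the proof can be considered complete.
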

\begin{proof}
The idea of the proof is to compare $\S_t$ with the capillary spherical cap centered at $z_*-r(t)\cos\theta E_{n+1}$ with shrinking radius $r(t)$, where $r(t)$ satisfies
 \begin{eqnarray}\label{defr-0}
     r'(t)= -\left(\frac{n}{r(t)}\right)^\a,
 \end{eqnarray} with $r(s)=\frac{99}{100}r_0$. 
We define \begin{eqnarray*}
     F(t):= \frac 1 2 \left(\min_M |X(\cdot,t)-(z_*-r(t)\cos\theta E_{n+1})|^2-r^2(t)\right),
 \end{eqnarray*}
for any $t \geq s$ such that $\S_t$ and $r(t)$ exist.
 It is easy to see that $F(s) > 0$. The conclusion \eqref{inner cap enclose} will follow easily once we show that $F(t)>0$ for  $t>s$.
  
 Suppose $t_0>s$ be the first time such that $F(t)=0$, and let $p_0 \in M$  be a minimum value point of 
 $$
 |X(p_0,t_0)-(z_*-r(t_0)\cos\theta E_{n+1})|=\min_M |X(\cdot,t_0)-(z_*-r(t_0)\cos\theta E_{n+1})|=r(t_0).
 $$ Next we divide the proof into two cases: $p_0\in \mathring M$ and $p_0\in \p M$. 
 \begin{enumerate}
     \item 
$p_0 \in \mathring M$. Then at $p_0$ we have
 \begin{eqnarray}\label{normalpzero 0}
     \nu=\frac{X-(z_*-r(t_0)\cos\theta E_{n+1})}{|X-(z_*-r(t_0) \cos\theta E_{n+1})|}=\frac{X-(z_*-r(t_0) \cos\theta E_{n+1})}{r(t_0)},
 \end{eqnarray}
 which also implies
  \begin{eqnarray}\label{nutildepzero 0}
  \wt \nu = \nu-\cos\theta E_{n+1}=\frac{X(p_0,t_0)-z_*}{r(t_0)}.
   \end{eqnarray}
Moreover, the spherical cap $\C_{r(t_0),\theta}(z_*-r(t_0)\cos\theta E_{n+1})$ is tangent to $\S_{t_0}$ at $X(p_0,t_0)$ from the interior. It follows that the principal curvatures $\k_i$ of $\S_{t_0}$ satisfy 
\begin{eqnarray*}
    \k_i(p_0,t_0) \leq  \frac  1 {r(t_0)}, ~~\forall ~1\leq i \leq n.
\end{eqnarray*} Then
\begin{eqnarray}\label{H compare}
    H(p_0,t_0)\leq \frac  n {r(t_0)}.
\end{eqnarray}

\item 
$p_0 \in \partial M$. By \eqref{principal}, if we choose $\{e_\a\}_{\a=1}^{n-1}$ to be an orthonormal frame on $T\p\S_{t_0}\subset T\RR^n$ and set $e_n:=\mu$, then $\{e_i\}_{i=1}^{n}$ is an orthonormal frame on $T\S_{t_0}$.
From \cite[Proposition 2.4 (2)]{WWX22}, we know  
\begin{eqnarray}\label{kappa-hat}
    \k_\a=\sin \theta \, \wh{\k}_\a,
\end{eqnarray} where $\{\wh\k_\a\}_{\a=1}^{n-1}$ are the principal curvatures of $\p\S_t$ in $\p\RR^{n+1}_+=\RR^n$.

By definition of $p_0$, we have that
 \begin{eqnarray*}
\lefteqn{|X(p_0,t_0)-(z_*-r(t_0)\cos\theta E_{n+1})| ^2} \\
 & = & \min_M |X(\cdot,t_0)-(z_*-r(t_0)\cos\theta E_{n+1})|^2 \\
&=&  \min_{\partial M} |X(\cdot,t_0)-(z_*-r(t_0)\cos\theta E_{n+1})|^2.
 \end{eqnarray*}
On the other hand, for all $p \in \partial M$ we have
$$ \qquad \qquad |X(p,t_0)-z_*+r(t_0)\cos\theta E_{n+1}|^2=|X(p,t_0)-z_*|^2+r^2(t_0)\cos^2\theta.$$
It follows that $p_0$ attains the minimal value of $\min\limits_{\p M}|X(\cdot, t_0)-z_*|^2$. Then we have that the horizontal component of $\nu$ is parallel to $X-z_*$ at $p_0$. In addition, if we regard $\p\S_{t_0}$ as a subset of $\p\RR^{n+1}_+=\RR^n$, we have that 
 the $n$-dimensional ball $B_{|X(p_0,t_0)-z_*|}^n(z_*)\subset \p\RR^{n+1}_+$
 is tangent to $\p\S_{t_0}$ from the inside at $X(p_0,t_0)$. This yields
 \begin{eqnarray}\label{wh kappa}
     \wh{\k}_\a \leq \frac 1 {|X(p_0,t_0)-z_*|}, \quad \forall ~ 1\leq \a \leq n-1,
 \end{eqnarray}and 
\begin{eqnarray}\label{tilde nu}
    \wt \nu=\frac{|\wt \nu|}{|X-z_*|} (X-z_*)= \frac{| \nu - \cos\theta E_{n+1}|}{r(t_0) \sin \theta}(X-z_*) = \frac{X-z_*}{r(t_0)},
\end{eqnarray}
where the last equality used \eqref{contact angle}. That is, \eqref{nutildepzero 0} holds in this case, and also \eqref{normalpzero 0}. Furthermore,  from \eqref{tilde nu}, we have
\begin{eqnarray*}
    \<\mu, X-z_*+r\cos\theta E_{n+1}\>=r(t_0) \<\mu,\nu\>=0,
\end{eqnarray*}that is, $\mu$ is tangent to the spherical cap centred at $X-z_*+r\cos\theta E_{n+1}$ with radius $r(t_0)$, and this implies that
\begin{eqnarray}\label{h nn}
    h(\mu,\mu) \leq \frac 1{r(t_0)}.
\end{eqnarray}In view of \eqref{wh kappa}, \eqref{kappa-hat} and $|X(p_0,t_0)-z_*|=r(t_0)\sin\theta$, we have
\begin{eqnarray*}
    \k_\a \leq \frac 1 {r(t_0)}, ~~ \forall~ 1\leq \a\leq n-1.
\end{eqnarray*}Together with \eqref{h nn}, we conclude that \eqref{H compare} holds also in this case.

 \end{enumerate}
In the following we use standard properties about the derivative of the minimum of a family of smooth functions; we also assume for simplicity that $F$ is differentiable at $t=t_0$, since the argument can be extended to the general case by using Dini derivatives.

We have seen that, in both cases, we have \eqref{normalpzero 0}, \eqref{nutildepzero 0} and \eqref{H compare}.
Taking also into account \eqref{defr-0}, we compute
 \begin{eqnarray*}
  \left.  \frac{d}{dt} F(t) \right|_{t=t_0} &=&\< X-z_*+r\cos\theta E_{n+1}, \p_tX+r'(t_0) \cos\theta E_{n+1}\>-rr'(t_0)
\\ &= & \left\<r\nu, (\phi-H^\a) \frac{X-z_*}{r}+r'(t_0)  \left(\nu-\frac{X-z_*}{r}\right) \right \>-rr'(t_0)
     \\&=&  (\phi-H^\a) \<X-z_*,\nu\>-r'(t_0) \<X-z_*,\nu\> 
     \\&>&  \<X-z_*,\nu\>  \left[ -H^\a+\left(\frac n {r(t_0)}\right)^\a\right]\geq 0,
 \end{eqnarray*}
which contradicts the definition of $t_0$. This shows that $F(t)$ cannot vanish and therefore remains positive for $t>s$. 
Now it suffices to choose $\tau_0>0$ such that $r(t) \geq \frac {r_0} 2$ for all $t\in [s, s+\tau_0)$. Note that from \eqref{defr-0}, we know $\tau_0$ only depends on $r_0$, hence only on the initial datum. Then we have $F(t)>0$ for all $t\in [s,\min\{s+\tau_0, T^*\})$. This completes the proof.

\end{proof}

We can now prove a uniform upper bound for the curvature of our flow.

\begin{prop}\label{H upper bdd prop}
If $\S_t$ be a solution of the flow \eqref{MCF-capillary}, there holds 
    \begin{eqnarray}\label{H upper bdd}
        H \leq C \mbox{ on }\S_t,~~~\forall ~ t\in [0, T^*),
    \end{eqnarray} where $C$ is a positive constant, only depending on the initial datum.
\end{prop}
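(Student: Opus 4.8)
The plan is to adapt Tso's technique \cite{Tso} to the capillary setting, using the capillary support function $\bar u$ from \eqref{relative support} in place of the classical support function. Fix an arbitrary $s\in[0,T^*)$ and let $z_*\in\p\RR^{n+1}_+$ be the center provided by Lemma \ref{lower bdd bar u}, so that $\wh{\C_{r_0/2,\theta}(z_*)}\subset\wh{\S_t}$ for $t\in[s,\min\{s+\tau_0,T^*\})$. Working with $z=z_*$ in the definition of $\bar u$, this containment together with the convexity from Proposition \ref{convexity pres} gives a uniform lower bound $\bar u\geq c_0>0$ on this time interval (the capillary support function of a convex hypersurface enclosing a spherical cap around $z_*$ is bounded below by the cap's radius, up to a factor depending on $\theta$). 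Since $\tau_0$ depends only on the initial datum, iterating over consecutive intervals of length $\tau_0$ yields $\bar u\geq c_0$ for all $t\in[0,T^*)$, with $c_0$ depending only on the initial datum. Here I also use that, by Proposition \ref{bound outer-inner radius}, $\bar u$ is bounded above as well.

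Next I would introduce the auxiliary quantity
\begin{eqnarray*}
W:=\frac{H^\a}{\bar u-\frac{c_0}{2}}
\end{eqnarray*}
(or an analogue like $H^\a(\bar u-c_0/2)^{-1}$ with a slightly different shift, chosen so the denominator stays bounded below) and compute $\L W$ using the evolution equations \eqref{evo of bar u} and \eqref{H evol MCF} from Propositions \ref{evo of relative u} and \ref{evo of H}, together with the Neumann conditions \eqref{neumann of bar u} and \eqref{neumann of H}. The crucial point, exactly as in the closed case, is that at an interior spatial maximum of $W$ one has $\n W=0$, which converts the gradient terms $\n H$ and $\n\bar u$ into each other and produces, after using $|h|^2\geq H^2/n$ and absorbing the non-local term $\phi$ (which is bounded in terms of $\sup H^\a$ but enters with a favorable structure, or can be controlled by a further application of the bounds), a differential inequality of Riccati type of the form
\begin{eqnarray*}
\frac{d}{dt}\Big(\max_{\S_t}W\Big)\leq C_1-C_2\,\big(\max_{\S_t}W\big)^{1+\frac1\a}
\end{eqnarray*}
for positive constants $C_1,C_2$ depending only on the initial datum. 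A standard ODE comparison argument then bounds $\max_{\S_t}W$ by a constant depending only on the initial datum and, crucially, independent of $s$ and of $T^*$; combined with the lower bound on $\bar u-c_0/2$ and the upper bound on $\bar u$, this gives \eqref{H upper bdd}.

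The remaining issue is the boundary: I must check that the maximum of $W$ on $\S_t$ is not attained on $\p\S_t$, or if it is, that the Hopf-type boundary condition still gives the required sign. Since $\n_\mu\bar u=0$ by \eqref{neumann of bar u} and $\n_\mu H=0$ by \eqref{neumann of H}, we get $\n_\mu W=0$ on $\p\S_t$, so the spatial maximum can be treated at a boundary point just as at an interior point, with the Hopf lemma (or the Dini-derivative version of the maximum principle) ruling out any bad boundary contribution; this is the main place where the capillary modification of the support function pays off, since the ordinary support function $u$ satisfies only the inhomogeneous Neumann condition \eqref{neumann of u}. I expect the main obstacle to be the careful bookkeeping of the non-local term $\phi$ and of the extra gradient term $2\a H^{\a-1}\langle\n\bar u,\n(1-\cos\theta\langle\nu,E_{n+1}\rangle)\rangle$ in \eqref{evo of bar u}: one needs to show these do not spoil the sign of the leading negative term in the Riccati inequality, which should follow from Young's inequality together with the uniform bounds on $\bar u$ and on the geometry from Proposition \ref{bound outer-inner radius}, but requires some care in the regime $\a<1$ where $H^{\a-2}|\n H|^2$ terms appear.
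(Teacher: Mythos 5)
Your proposal follows essentially the same route as the paper: Tso's trick with the test function $H^\a/(\bar u-\ve)$ built from the capillary support function centered at the point $z_*$ of Lemma \ref{lower bdd bar u}, the homogeneous Neumann conditions \eqref{neumann of bar u} and \eqref{neumann of H} to remove any boundary obstruction, the favorable (negative) sign of the $\phi$-terms coming from convexity, and a Riccati-type ODE comparison on each interval $[s,\min\{s+\tau_0,T^*\})$. The only point you should make explicit is that for $s>0$ the value of your test quantity at time $s$ is not a priori controlled, so the comparison must be used in its decay form (the superlinear negative term forces $\max W(\cdot,t)\lesssim (t-s)^{-\a}$ independently of the value at $t=s$), giving a uniform bound on $[s+\tfrac{\tau_0}{2},s+\tau_0)$, while the initial stretch $[0,\tau_0)$ is handled with $s=0$ using the initial datum --- exactly the two-case argument in the paper.
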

\begin{proof} 

For any given $s\in[0, T^*)$,  let $z_*$ and $\tau_0$ be chosen as in Lemma \ref{lower bdd bar u}. Then \eqref{inner cap enclose} and the convexity of $\S_t$ imply 
$$
\< X - z_*+\frac{r_0}{2} \cos \theta E_{n+1} , \nu \> \geq \frac{r_0}{2}, 
$$
showing that the capillary support function satisfies 
\begin{eqnarray*}\label{bar u lower bdd}
    \bar u:=\frac{\<X-z_*,\nu\>}{1-\cos\theta \<\nu, E_{n+1}\>} \geq \frac {r_0} 2>0,~~ ~ \forall ~t\in [s,\min\{s+\tau_0, T^*\}).
\end{eqnarray*}
We denote $\ve:=\frac {r_0} 4$, so that $\bar u-\ve \geq \ve>0$,  and introduce the function  
\begin{eqnarray}\label{test-Phi}
    \Phi:=\frac{H^\a}{ \bar u-\varepsilon}.
\end{eqnarray}
From Proposition \ref{evo of H}, we deduce that
 \begin{eqnarray}\label{evo of H alpha}
        \L H^\a=-\a H^{\a-1} (\phi-H^\a) |h|^2 +2\a H^{\a-1} \< \n H^\a,\n (1-\cos\theta \<\nu,E_{n+1}\>)\>.
    \end{eqnarray}
 Combining \eqref{evo of bar u} and \eqref{evo of H alpha},  we obtain
 \begin{eqnarray*}
    \L \Phi&= &  \frac{\L H^\a}{\bar u-\ve} -\frac{H^\a \L \bar u}{(\bar u-\ve)^2}+\frac{2\a  H^{\a-1}}{\bar u-\ve} (1-\cos\theta \<\nu,E_{n+1}\>) \<\n \Phi,\n \bar u\>
    \\&=& \frac{1}{\bar u-\ve} (-\a H^{\a-1}f |h|^2 +2\a H^\a \< \n H^\a,\n (1-\cos\theta \<\nu,E_{n+1}\>)\>) 
    \\&& -\frac{\Phi}{\bar u-\ve}  \left[ \phi -(1+\a) H^\a +\a \bar u H^{\a-1} |h|^2 \right ] 
   \\&&  -\frac{2\a H^{\a-1}\Phi}{\bar u-\ve}    \left\<\n \bar u,\n (1-\cos\theta \<\nu,E_{n+1}\>) \right\> 
     \\&&+\frac{2\a   H^{\a-1}}{\bar u-\ve}  (1-\cos\theta \<\nu,E_{n+1}\>) \<\n \Phi,\n \bar u\>
    \\&=& -(\a H^{\a-1} |h|^2+\Phi) \frac{\phi}{\bar u-\ve}-\frac{\ve \a H^{2\a-1} |h|^2}{(\bar u-\ve)^2} +(1+\a) \Phi^2
     \\&&+2\a H^{\a-1}\<\n \Phi,\n (1-\cos\theta \<\nu,E_{n+1}\>) \>+\frac{2\a \v H^{\a-1}}{\bar u-\ve}  \<\n \Phi,\n \bar u\>
     \\&\leq & -\frac{\ve \a H^{2\a-1} |h|^2}{(\bar u-\ve)^2} +(1+\a) \Phi^2, ~~~\text{ mod } \n \Phi,
\end{eqnarray*}
where we have used that $\phi>0$ by the convexity of $\S_t$. Using also $$|h|^2 \geq \frac{H^2}{n},$$ we conclude that
\begin{eqnarray}\label{L Phi}
    \L \Phi \leq - \frac{\a}{n}\ve^{1+\frac 1 \a}  \Phi^{2+\frac 1 \a}  +(1+\a)\Phi^2 
 ~~~\text{ mod } \n \Phi,
\end{eqnarray}holds on $t\in [s, \min\{s+\tau_0,T^*\})$.
    
On $\p M$, by \eqref{neumann of H} and \eqref{neumann of bar u} we have
\begin{eqnarray}\label{phi neumann}
    \n_\mu \Phi=\frac{\n_\mu H^\a}{\bar u-\varepsilon}-\frac{H^\a \n_\mu \bar u}{(\bar u-\varepsilon)^2}
= 0.
\end{eqnarray}
By the Hopf boundary point lemma, this shows that $\Phi$ attains its maximum value either at $t=0$ or at some interior point, say $p_0\in \mathring M$. We define  $$\wt{\Phi}(t):=\sup_{M} \Phi(p,t).$$ From \eqref{L Phi}, we know $\wt \Phi$ satisfies
\begin{eqnarray}\label{ODE Phi}
    \frac{d}{dt} \wt{\Phi}(t) \leq  \wt \Phi^2 \left(1+\a-\frac{\a}{n}\varepsilon^{1+\frac 1 \a} \wt\Phi^{\frac 1 \a}\right), ~~~\text{ a.e. } t
  \in  [s,\max\{s+\tau_0,T^*\}).
\end{eqnarray}
In the case $s=0$, we argue as follows. From \eqref{ODE Phi} we see that, if $\wt\Phi>  \left(\frac{2n(1+\a)}{\a \ve^{1+\frac 1 \a}}\right)^\a$ at some time, then it does not increase. This implies
\begin{eqnarray*}
    \Phi (\cdot, t) \leq \max\left\{ \max_M \Phi(\cdot, 0), \left(\frac{2n(1+\a)}{\a \ve^{1+\frac 1 \a}}\right)^\a \right\},
\end{eqnarray*}for $t\in \left[0,\min \left\{ \tau_0 , T^*\right\}\right)$. Together with Proposition \ref{bound outer-inner radius}, this yields
\begin{eqnarray}\label{H upper1}
    H(\cdot, t) \leq \frac{R_0^{\frac 1 \a} }{\ve}\max\left\{ 
\max_M H(\cdot,0), \frac{2(1+\a)}{\a \ve^{\frac 1 \a}} \right\},
\end{eqnarray}for $t\in \left[0,\min\left\{ \tau_0 , T^*\right\}\right)$. 

Now we take any $s>0$ and deduce from \eqref{ODE Phi} that, whenever $\wt\Phi>\left(\frac{2n(1+\a)}{\a \ve^{1+\frac 1 \a}}\right)^\a$, we have \begin{eqnarray*}\label{ODE Phi2}
    \frac{d}{dt} \wt{\Phi}\leq  -(1+\a) \wt \Phi^2 , ~~\text{ a.e. } t \in [s,\max\{s+\tau_0,T^*\}).
\end{eqnarray*}This implies
\begin{eqnarray*}
    \wt\Phi(t) \leq \frac 1 {\wt\Phi^{-1}(s)+(1+\a) (t-s)}\leq \frac 1 {(1+\a)(t-s)},
\end{eqnarray*}
thus it follows
\begin{eqnarray*}
    \Phi(\cdot, t)\leq \max\left\{  \left(\frac{2n(1+\a)}{\a \ve^{1+\frac 1 \a}}\right)^\a, \frac 1 {(1+\a)(t-s)} \right\},
\end{eqnarray*}for $t\in [s,\min\{s+\tau_0,T^*\})$. In particular, if $t>s+\frac{\tau_0}{2}$, we obtain
\begin{eqnarray*}
    \Phi(\cdot, t)\leq \max\left\{  \left(\frac{2n(1+\a)}{\a \ve^{1+\frac 1 \a}}\right)^\a, \frac 2 {(1+\a)\tau_0 } \right\},
\end{eqnarray*} and so
\begin{eqnarray}\label{H upper2}
    H(\cdot, t) \leq R_0^{\frac  1 \a} \max\left\{  \frac{2n(1+\a)}{\a \ve^{1+\frac 1 \a}} , \left(\frac 2 {(1+\a)\tau_0} \right)^{\frac 1 \a} \right\},
\end{eqnarray}for $t\in [s+\frac {\tau_0} 2,\min\{s+\tau_0,T^*\})$. Since $s$ is arbitrary, the conclusion \eqref{H upper bdd} follows by combining \eqref{H upper1} and \eqref{H upper2}.
\end{proof}

Proposition \ref{H upper bdd prop} and Proposition \ref{convexity pres} immediately imply that all principal curvatures of $\S_t$ are bounded.

\begin{cor}\label{curvature bound}  Let $\S_t$ be the solution of the flow  \eqref{MCF-capillary}. Then there exists $C>0$ depending only on $\S_0$ such that the the principal curvatures of $\S_t$ satisfy
$$\max\limits_{1\leq i \leq n} \kappa_i  \le C, ~~~\forall ~ t\in [0, T^*).$$
\end{cor}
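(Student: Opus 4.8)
The plan is to obtain this estimate as an immediate consequence of the two preceding results, with no further analytic work required. By Proposition~\ref{convexity pres}, strict convexity of $\S_0$ is preserved along the flow, so for every $t\in[0,T^*)$ and at every point of $\S_t$ the Weingarten map is positive definite; in particular all principal curvatures satisfy $\kappa_i\ge 0$. By Proposition~\ref{H upper bdd prop}, the mean curvature obeys a uniform bound $H\le C$ with $C=C(\S_0)$.

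Since $H=\sum_{i=1}^{n}\kappa_i$ is a sum of nonnegative terms, each individual eigenvalue is dominated by the trace, so $\kappa_i\le H\le C$ for all $i$ and all $t\in[0,T^*)$, which is precisely the assertion. Equivalently, one gets $|h|^2=\sum_i\kappa_i^2\le\big(\sum_i\kappa_i\big)^2=H^2\le C^2$, so the full second fundamental form of $\S_t$ is uniformly bounded along the flow; this is the form in which the estimate will be used, e.g.\ to control the coefficients in the evolution equations and, together with a uniform lower bound on $H$, to guarantee uniform parabolicity.

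I do not expect any obstacle here: all the substantive arguments --- Tso's technique adapted through the capillary support function $\bar u$ for the upper curvature bound, and the Hu-Wei-Yang-Zhou boundary tensor maximum principle for the invariance of strict convexity --- have already been carried out in Propositions~\ref{H upper bdd prop} and~\ref{convexity pres}. The corollary is only the elementary linear-algebra observation that an upper bound on the trace of a positive semidefinite symmetric matrix bounds each of its eigenvalues.
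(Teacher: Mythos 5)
Your argument is correct and is exactly the paper's: the corollary is stated there as an immediate consequence of Proposition~\ref{convexity pres} (preserved convexity, so $\kappa_i\ge 0$) and Proposition~\ref{H upper bdd prop} (uniform bound $H\le C$), giving $\kappa_i\le H\le C$ by the trace estimate.
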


From the upper bound of the mean curvature, we obtain the following estimate for the non-local term $\phi(t)$ of flow \eqref{MCF-capillary}.
\begin{prop}\label{phi bounds}
    Along the flow \eqref{MCF-capillary}, there holds
    \begin{eqnarray}\label{global term bound}
    b\leq   \phi(t)\leq b^{-1},
    \end{eqnarray}for some positive constant $b$, which only depends on the initial datum.
\end{prop}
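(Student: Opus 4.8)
The plan is to use that, for either choice of $\phi$, the value $\phi(t)$ is a weighted average of $H^\a$ over $\S_t$, so that the upper bound follows at once from the curvature bound of Proposition~\ref{H upper bdd prop}, while the lower bound reduces to a uniform positive lower bound on the weighted total mean curvature. Put $dw_t:=(1-\cos\theta\<\nu,E_{n+1}\>)\,dA$; since $\theta\in(0,\frac\pi2)$ we have $1-\cos\theta\<\nu,E_{n+1}\>\ge 1-\cos\theta>0$, so $dw_t$ is a positive measure of total mass $W_\theta(\wh{\S_t})$ by \eqref{equivalent}, and $H>0$ on $\S_t$ by Proposition~\ref{convexity pres}. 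Then $\phi(t)=\int_{\S_t}H^\a\,d\mu_t$ with $d\mu_t:=dw_t/W_\theta(\wh{\S_t})$ in case \eqref{phi-volume pres}, and $\phi(t)=\int_{\S_t}H^\a\,d\eta_t$ with $d\eta_t:=\big(\int_{\S_t}H\,dw_t\big)^{-1}H\,dw_t$ in case \eqref{phi-area pres}, where $\mu_t,\eta_t$ are probability measures. The upper bound in \eqref{global term bound} is then immediate: by Proposition~\ref{H upper bdd prop}, $H\le C$ on $\S_t$ for all $t\in[0,T^*)$, hence $H^\a\le C^\a$ and $\phi(t)\le C^\a$.

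The main work is the lower bound on $\int_{\S_t}H\,dw_t$. To get it, fix $t$, let $x_0\in\p\RR^{n+1}_+$ be the centre of an inscribed capillary cap $\wh{\C_{r_0,\theta}(x_0)}\subset\wh{\S_t}$ provided by Proposition~\ref{bound outer-inner radius}, and use the capillary support function $\bar u$ of \eqref{relative support} relative to $z=x_0$. Convexity gives $\bar u>0$ on $\S_t$, while $\wh{\S_t}\subset\wh{\C_{R_0,\theta}}$ and \eqref{inn-out radius} give
\[
\bar u=\frac{\<X-x_0,\nu\>}{1-\cos\theta\<\nu,E_{n+1}\>}\le\frac{|X-x_0|}{1-\cos\theta}\le\frac{\mathrm{diam}(\wh{\S_t})}{1-\cos\theta}\le\frac{2R_0}{1-\cos\theta}=:\bar u_0.
\]
By the capillary Minkowski formula $\int_{\S_t}\bar u\,H\,dw_t=n\,W_\theta(\wh{\S_t})$ (see, e.g.,\ \cite{WWX22,WWX23}),
\[
\int_{\S_t}H\,dw_t\ \ge\ \frac{1}{\bar u_0}\int_{\S_t}\bar u\,H\,dw_t\ =\ \frac{n\,W_\theta(\wh{\S_t})}{\bar u_0}\ \ge\ \frac{n\,W_{\min}}{\bar u_0}\ =:\ c_0>0,
\]
where $W_{\min},W_{\max}$ are the bounds on $W_\theta(\wh{\S_t})$ from Proposition~\ref{bound outer-inner radius}; also $\int_{\S_t}H\,dw_t\le C\,W_{\max}$. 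If one prefers to stay within the excerpt, the same lower bound follows without the Minkowski formula from the observation that the Gauss image of a strictly convex capillary hypersurface with angle $\theta$ is the fixed spherical cap $\Omega_\theta:=\{y\in\SS^n:\<y,E_{n+1}\>\ge\cos\theta\}$, so that the Gauss-map change of variables and $\kappa_i\le C$ (Corollary~\ref{curvature bound}) give $\int_{\S_t}H\,dA=\int_{\Omega_\theta}\big(\sum_i\prod_{j\ne i}\kappa_j^{-1}\big)\,d\omega\ge n\,C^{1-n}|\Omega_\theta|>0$.

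Finally I would convert this into a lower bound on $\phi$. In case \eqref{phi-area pres}, H\"older's inequality with exponent $\a+1$ gives $\int_{\S_t}H\,dw_t\le\big(\int_{\S_t}H^{\a+1}\,dw_t\big)^{1/(\a+1)}W_\theta(\wh{\S_t})^{\a/(\a+1)}$, hence for every $\a>0$
\[
\phi(t)=\frac{\int_{\S_t}H^{\a+1}\,dw_t}{\int_{\S_t}H\,dw_t}\ \ge\ \left(\frac{\int_{\S_t}H\,dw_t}{W_\theta(\wh{\S_t})}\right)^{\a}\ \ge\ \left(\frac{c_0}{W_{\max}}\right)^{\a}.
\]
In case \eqref{phi-volume pres}: if $\a\ge1$, Jensen's inequality for the convex function $s\mapsto s^\a$ and the probability measure $\mu_t$ gives $\phi(t)\ge\big(\int_{\S_t}H\,d\mu_t\big)^\a\ge(c_0/W_{\max})^\a$; if $0<\a<1$, then $H^\a=H^{\a-1}H\ge C^{\a-1}H$ pointwise (as $H\le C$ and $\a-1<0$), so $\phi(t)=\int_{\S_t}H^\a\,d\mu_t\ge C^{\a-1}\int_{\S_t}H\,d\mu_t\ge C^{\a-1}c_0/W_{\max}$. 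Taking $b>0$ smaller than $C^{-\a}$ and than all the lower bounds just found gives \eqref{global term bound}. I expect the genuine obstacle to be exactly the uniform lower bound on $\int_{\S_t}H\,dw_t$ in the previous paragraph; once the Minkowski identity (or the Gauss-image remark) is in hand, the rest is a routine use of H\"older's and Jensen's inequalities.
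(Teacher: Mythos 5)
Your proposal is correct, and its only real divergence from the paper is in how the uniform positive lower bound on the weighted total mean curvature $\int_{\S_t}H(1-\cos\theta\<\nu,E_{n+1}\>)\,dA$ is obtained. The paper gets it from the capillary Minkowski-type \emph{inequality} of \cite{WWX23} (Theorem 1.2 there), $\int_{\S_t}H(1-\cos\theta\<\nu,E_{n+1}\>)\,dA\geq c_{n,\theta}W_\theta(\wh{\S_t})^{\frac{n-1}{n}}$, combined with the bounds on $W_\theta(\wh{\S_t})$ from Proposition \ref{bound outer-inner radius}; you instead use the elementary capillary Minkowski \emph{identity} $\int_{\S_t}\bar u\,H\,(1-\cos\theta\<\nu,E_{n+1}\>)\,dA=n\,W_\theta(\wh{\S_t})$ (valid since your reference point $x_0$ lies on $\p\RR^{n+1}_+$), together with the upper bound $\bar u\leq 2R_0/(1-\cos\theta)$ coming from the outer-radius estimate and $H>0$. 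Both routes yield $\int H\,dw_t\gtrsim W_\theta(\wh{\S_t})$-type control, and from there your concluding steps (H\"older in the area-preserving case, Jensen for $\a\geq 1$, and the pointwise bound $H^\a\geq C^{\a-1}H$ for $0<\a<1$ using Proposition \ref{H upper bdd prop}) are essentially the same manipulations as in the paper; the upper bound $\phi\leq C^\a$ is identical in both. What each approach buys: yours avoids invoking the (nontrivial) capillary Minkowski inequality, at the price of using convexity, an interior reference point and the two-sided radius bounds of Proposition \ref{bound outer-inner radius}, all of which are indeed available at this stage; the paper's argument needs no support-function or diameter control and would survive without convexity of $\S_t$ at this step. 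One caveat: your Gauss-map fallback is only a sketch --- the assertion that the Gauss image of a strictly convex capillary hypersurface with angle $\theta$ is exactly the spherical cap $\{y\in\SS^n:\<y,E_{n+1}\>\geq\cos\theta\}$ requires an argument (e.g.\ that interior critical values of $\<\nu,E_{n+1}\>$ are $\pm1$ plus a degree argument) --- but since the Minkowski-identity route is complete, this does not affect the validity of the proof.
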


\begin{proof}
The upper bound in \eqref{global term bound} follows directly from Proposition \ref{bound outer-inner radius} and Proposition \ref{H upper bdd prop}.

For the proof of the lower bound, we will need the following Minkowski-type inequality for capillary hypersurfaces in the half-space (see Theorem 1.2 in \cite{WWX23} and the following remarks)
\begin{eqnarray}\label{mink ineq1} 
    \int_\S H(1-\cos\theta \<\nu,E_{n+1}\> ) dA \geq c_{n,\theta}W_{\theta}(\wh\S)^{\frac{n-1}{n}},\end{eqnarray}where $c_{n, \theta} :=  n(n+1)^{\frac 1 n}|\wh{\C_\theta}|^{\frac 1 n}.$ 
\

Let us first look at the case that $\phi(t)$ is chosen as in \eqref{phi-volume pres}. 
If $\a\geq 1$, by the H\"{o}lder inequality and Minkowski inequality \eqref{mink ineq1}, we have

\begin{eqnarray*}
\lefteqn{c_{n,\theta} W_{\theta}(\wh{\S_t})^{\frac {n-1}{n}}  \leq   \int_{\S_t} H(1-\cos\theta \<\nu,E_{n+1}\>) dA}
\\& \leq & \left( \int_{\S_t} H^\a (1-\cos\theta \<\nu,E_{n+1}\> ) dA\right)^{\frac 1 \a} \left(\int_{\S_t}  (1-\cos\theta \<\nu,E_{n+1}\>) dA \right)^{1-\frac 1 \a}
 \\&=& \phi^{\frac 1 \a}  \int_{\S_t}(1-\cos\theta \<\nu,E_{n+1}\>)dA,
\end{eqnarray*} which implies
\begin{eqnarray*}
    \phi(t) \geq c_{n,\theta}^\a W_{\theta}(\wh{\S_t})^{-\frac \a n}.
 \end{eqnarray*}Taking into account Proposition \ref{bound outer-inner radius}, the conclusion follows.

If $0<\a<1$, we obtain from Minkowski inequality \eqref{mink ineq1} and Proposition \ref{H upper bdd prop},
\begin{eqnarray*}
c_{n,\theta} W_{\theta}(\wh{\S_t})^{\frac {n-1}{n}} &\leq &   \int_{\S_t} H(1-\cos\theta \<\nu,E_{n+1}\>)dA  \\& \leq & \max_{\S_t} H^{1-\a} \int_{\S_t} H^\a (1-\cos\theta \<\nu,E_{n+1}\>) dA\\&  \leq & C_1\int_{\S_t} H^\a (1-\cos\theta \<\nu,E_{n+1}\>) dA.
\end{eqnarray*}Together with Proposition  \ref{bound outer-inner radius}, this implies
\begin{eqnarray*}
    \phi(t)\geq \frac{ c_{n,\theta}}{C_1} W_{\theta}(\wh{\S_t})^{-\frac 1 n}\geq b,
\end{eqnarray*}for some positive constant $b$, depending on the initial datum.

In the case $\phi(t)$ is chosen as in \eqref{phi-area pres}, for any $\a>0$, by H\"older inequality,
\begin{eqnarray*}
& & \int_{\S_t} H(1-\cos\theta \<\nu,E_{n+1}\>) dA  \\
& \leq & \left( \int_{\S_t} H^{\a+1} (1-\cos\theta \<\nu,E_{n+1}\> ) dA\right)^{\frac 1 {\a+1} } \left(\int_{\S_t}  (1-\cos\theta \<\nu,E_{n+1}\>) dA \right)^{\frac{\a}{\a+1}}
 \\&=& \phi^{\frac 1 {\a+1}}  \left(\int_{\S_t} H(1-\cos\theta \<\nu,E_{n+1}\>) dA\right)^{\frac 1 {\a+1}} W_{\theta}(\wh{\S_t})^{\frac \a {\a+1}}.
\end{eqnarray*}
Using again the Minkowski inequality \eqref{mink ineq1}, 
\begin{eqnarray*}
    \phi(t) &\geq & \left(\int_{\S_t} H(1-\cos\theta \<\nu,E_{n+1}\>) dA\right)^\a W_{\theta}(\wh{\S_t})^{-\a}
    \\& \geq & c_{n,\theta}^\a  W_{\theta}(\wh{\S_t})^{-\frac \a n},
\end{eqnarray*}and we obtain the desired lower bound for $\phi$ by Proposition \ref{bound outer-inner radius}.
\end{proof}

Before proving a uniform lower bound on the mean curvature, we need to estimate the position of our evolving hypersurface by showing that it cannot drift arbitrarily far during the flow. For constrained flows of closed hypersurfaces, McCoy has used an Alexandrov-type reflection argument due to Chow and Gulliver to show that the solution is contained for all times in a suitable fixed ball, see e.g. \cite[Proposition 3.4]{Mc03}.

Here we use a similar approach by using a different reflection argument, again due to Chow and Gulliver. We adapt the notation of \cite[Section 2]{Chow2} to describe the reflection of a capillary hypersurface across a vertical hyperplane. Given $V\in\mathbb{S}^{n}$ such that $\langle V,E_{n+1} \rangle=0$ (equivalently, $V \in \mathbb{S}^{n} \cap \p \RR^{n+1}_+$) and given $c\in \mathbb{R}$, we consider the hyperplane $\Pi_V^c=\{x \in \mathbb{R}^{n+1}\,|\, \langle x,V\rangle=c\}.$
 	Let $H^+(\Pi_V^c)$ (resp. $H^-(\Pi_V^c)$) be the halfspace $\{y\in \mathbb{R}^{n+1}\,|\,\langle x,V \rangle > c\}$ (resp. $\{x\in \mathbb{R}^{n+1}\,|\,\langle x,V \rangle < c\}$).

Let $\Sigma \subset \ol{\RR^{n+1}_+}$ be a capillary hypersurface  which bounds the domain $\widehat \Sigma$ and let $\Sigma^{\Pi}$ be the reflection of $\S$ about $\Pi_V^c$, i.e. $\Sigma^{\Pi}=\{x-2(\langle x,V \rangle-c)V\,|\,x\in \Sigma\}$. Clearly, $\Sigma^{\Pi}$ is again a capillary hypersurface with the same contact angle in $\ol{\RR^{n+1}_+}$
 	 We say that $\Sigma$ can be strictly reflected at $\Pi_V^c$ if $\Sigma^{\Pi}\cap H^-(\Pi_V^c) \subset (\widehat \Sigma \setminus \Sigma) \cap H^-(\Pi_V^c)$ and $V$ is not tangent to $\Sigma$ at the points in $\Sigma \cap \Pi_V^c$. 
	 
	 Then we have the following result. 
	
		\begin{thm}\label{ChowGulliver}
	Let $\Sigma_t$, for $t \in [0,T^*)$, be a smooth family of capillary hypersurfaces solving the flow \eqref{MCF-capillary}. Let $V\in\mathbb{S}^{n} \cap \p \RR^{n+1}_+$ and let $c\in \mathbb{R}$. If $\Sigma_0$ can be reflected strictly at $\Pi_V^c$, then $\Sigma_t$ can be reflected strictly at $\Pi_V^c$ for all time $t \in [0,T^*)$.
	\end{thm}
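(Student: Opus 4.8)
The plan is to run an Alexandrov-type reflection argument in the spirit of Chow--Gulliver \cite{Chow2}, the key point being that reflection about a \emph{vertical} hyperplane is a symmetry of the flow \eqref{MCF-capillary}. First I would note that, since $\langle V,E_{n+1}\rangle=0$, the Euclidean reflection $R$ across $\Pi_V^c$ fixes $E_{n+1}$ and maps $\ol{\RR^{n+1}_+}$ onto itself; hence, if $\S_t$ solves \eqref{MCF-capillary}, then so does $\S_t^{\Pi}:=R(\S_t)$, \emph{with the same non-local factor} $\phi(t)$. Indeed $H$, the capillary normal $\wt\nu=\nu-\cos\theta E_{n+1}$ (using $R_*E_{n+1}=E_{n+1}$), the contact angle, and all the integral quantities defining $\phi$ in \eqref{phi-volume pres}--\eqref{phi-area pres} are invariant under the isometry $R$. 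This is precisely what makes the comparison work despite the non-local character of the flow: the two surfaces to be compared carry \emph{identical} non-local terms, which therefore cancel in the evolution equation for their difference. (By contrast, a genuine avoidance principle against an unrelated barrier fails, as noted in the introduction.)

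Suppose the conclusion fails and let $t_0:=\sup\{t\in[0,T^*):\S_t \text{ can be reflected strictly at } \Pi_V^c\}<T^*$. By the openness of the strict reflection property and smoothness of the flow, $\S_t$ can be reflected strictly for $t<t_0$, while at $t=t_0$ this first fails in one of three ways: (i) there is an interior contact point $p_0\in\S_{t_0}$ with $\langle p_0,V\rangle<c$ where $\S_{t_0}^{\Pi}$ touches $\S_{t_0}$ from inside $\wh{\S_{t_0}}$; (ii) such a contact point $p_0$ lies on $\partial\S_{t_0}\subset\partial\RR^{n+1}_+$; (iii) $V$ becomes tangent to $\S_{t_0}$ at some $q_0\in\S_{t_0}\cap\Pi_V^c$, in which case $\S_{t_0}$ and $\S_{t_0}^{\Pi}$ are tangent at $q_0$ (the normal there is orthogonal to $V$, hence fixed by $R$). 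In each case I would write $\S_t$ and $\S_t^{\Pi}$ near the contact point as graphs $u$ and $\tilde u$ over a common hyperplane and, after reparametrizing so that points move in the graph direction, note that both solve the same quasilinear equation $\partial_t u=v(u)\,(\phi(t)-H^{\a}(u))$, with $v>0$ the graph-speed factor; this equation is uniformly parabolic near the contact point on the time interval considered, since $H>0$ on the smooth solution and the relevant curvature and gradient quantities are bounded there. Because the $\phi(t)$-terms match, $w:=u-\tilde u\ge 0$ then satisfies a \emph{linear} parabolic equation $\partial_t w=a^{ij}\nabla_i\nabla_j w+b^i\nabla_i w+cw$ with $a^{ij}>0$ and bounded coefficients, the quantity $v(u)-v(\tilde u)$ and the remaining lower-order contributions being absorbed into $b^i\nabla_i w+cw$.

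In case (i) one has $w>0$ for $t<t_0$ in a neighborhood of $p_0$ (there $\langle\cdot,V\rangle<c$, so strict reflection applies) while $w(p_0,t_0)=0$; the strong maximum principle then forces $w\equiv 0$ for $t\le t_0$ near $p_0$, contradicting $w>0$ for $t<t_0$. In case (iii) the contact point $q_0$ lies on $\Pi_V^c$, a boundary portion of the graph domain; Hopf's boundary point lemma gives $\nabla_n w(q_0,t_0)<0$ unless $w\equiv 0$, but the reflection symmetry makes $u$ and $\tilde u$ have opposite normal derivatives along $\Pi_V^c$, whence $\nabla_n w(q_0,t_0)=0$, a contradiction. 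Case (ii) is analogous but uses $\partial\RR^{n+1}_+$: since $R$ preserves $\partial\RR^{n+1}_+$ and the contact angle $\theta$, on the corresponding portion of the graph domain both $u$ and $\tilde u$ satisfy the \emph{same} oblique (capillary) boundary condition, so $w$ satisfies a homogeneous oblique condition $\nabla_\beta w=c\,w$ there; this gives $\nabla_\beta w(p_0,t_0)=0$, contradicting the oblique-derivative version of Hopf's lemma. In all cases we reach a contradiction, so $t_0=T^*$ and the theorem follows.

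I expect the main difficulty to be the local analysis in cases (ii) and (iii): for each type of contact point one must choose a hyperplane over which \emph{both} $\S_{t_0}$ and $\S_{t_0}^{\Pi}$ are simultaneously graphs (possible thanks to the tangential touching and to $\theta\neq 0$), verify that the reflection symmetry produces exactly the compatibility needed to defeat Hopf's lemma --- $\nabla_n w=0$ on $\Pi_V^c$, respectively the matching capillary oblique condition on $\partial\RR^{n+1}_+$ --- and check uniform parabolicity of the quasilinear operator up to the boundary. The interior touching (i) is routine once the invariance of $\phi(t)$ under $R$ has been recorded.
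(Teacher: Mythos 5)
Your proposal follows essentially the same route as the paper: the paper also proves this by the Alexandrov moving-plane argument of Chow--Gulliver, with the two key observations you make --- that reflection across a vertical hyperplane preserves the half-space, the contact angle and all the integral quantities, so the reflected family solves \eqref{MCF-capillary} with the \emph{same} $\phi(t)$ (making the nonlocal term harmless in the comparison), and that first touching at the capillary boundary or tangency of $V$ along $\Pi_V^c$ is excluded by the matching contact-angle (oblique) condition together with Hopf-type boundary lemmas. Your write-up just spells out in more detail the graph-comparison and case analysis that the paper delegates to \cite{Chow2}.
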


We just outline the proof of Theorem \ref{ChowGulliver}, which is almost the same as \cite[Theorem 2.2]{Chow2}, by using standard arguments of the Alexandrov moving plane method. The possibility that $\Sigma_t^{\Pi}$ and $\Sigma_t$ first touch at a point of the capillary boundary on $\p\RR^{n+1}_+$ is ruled out by the same contact angle condition of $\S_t^\Pi$ and $\S_t$ , similar to the case where $V$ becomes tangent to $\Sigma_t$ at a point in $\Sigma_t \cap \Pi_V^c$. We observe that the maximum principle argument used in the proof is not affected by the presence of the nonlocal term, since $\phi(t)$ has the same value for the original hypersurface and for the reflected one.

It is interesting to note that the convexity of $\Sigma_t$ is not required for Theorem \ref{ChowGulliver} and that it suffices to assume $H>0$; in addition, the contact angle can be an arbitrary value $\theta \in (0,\pi)$.

\begin{rem}\label{corCG}
Observe that, if a capillary hypersurface $\Sigma$ is contained in the halfspace  $H^-(\Pi_V^c)$, then $\Sigma^{\Pi}\cap H^-(\Pi_V^c)= \emptyset$ and it trivially satisfies the strict reflection property. Conversely, if $\Sigma \subset H^+(\Pi_V^c)$, it necessarily violates the reflection property. Therefore Theorem \ref{ChowGulliver} has the following corollary: if the initial hypersurface satisfies $\Sigma_0 \subset H^-(\Pi_V^c)$ for some vertical hyperplane $\Pi_V^c$, then we cannot have $\Sigma_t \subset H^+(\Pi_V^c)$, for any $t>0$.
\end{rem}

Now we are ready to show the solution of flow \eqref{MCF-capillary} remains inside a suitably large spherical cap with a fixed center point for all time.

 \begin{lem}\label{enclosed cap}
 Let $\S_t$ be the solution of \eqref{MCF-capillary}. Then there exists $z^* \in \p \RR^{n+1}_+$ and $R^*>0$ 
 such that
     \begin{eqnarray}\label{out cap enclose}
         \wh{\S_t} \subset  \wh{\C_{{R^*}, \theta}(z^*)}
     \end{eqnarray}for all $t\in [0,T^*)$.
 \end{lem}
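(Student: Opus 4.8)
The plan is to confine $\wh{\S_t}$ by combining the uniform outer radius bound $\rho_+(\wh{\S_t},\theta)\le R_0$ from Proposition~\ref{bound outer-inner radius} with the Alexandrov reflection result of Theorem~\ref{ChowGulliver} and Remark~\ref{corCG}. The outer radius bound already places $\wh{\S_t}$, for each fixed $t$, inside a capillary cap $\wh{\C_{R_0+1,\theta}(z_t)}$, but with a center $z_t\in\p\RR^{n+1}_+$ that could a priori drift; the only thing missing is a uniform bound on $|z_t|$. The reflection argument supplies precisely this control on the horizontal drift, while the vertical direction requires no separate treatment, since everything stays in $\ol{\RR^{n+1}_+}$ and the enclosing caps have bounded radius.

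Concretely, I would first use compactness of $\S_0$ to fix $C_0>0$ so large that $\S_0\subset H^-(\Pi_V^{C_0})$ for every horizontal unit vector $V\in\mathbb{S}^n\cap\p\RR^{n+1}_+$. By Remark~\ref{corCG}, $\S_0$ then satisfies the strict reflection property at each $\Pi_V^{C_0}$, so Theorem~\ref{ChowGulliver} propagates it to all $t\in[0,T^*)$, and a second application of Remark~\ref{corCG} yields $\S_t\not\subset H^+(\Pi_V^{C_0})$ for every $t$ and every such $V$. Next I would fix $t$, pick $z_t\in\p\RR^{n+1}_+$ with $\wh{\S_t}\subset\wh{\C_{R_0+1,\theta}(z_t)}$, and observe that every point $x$ of this cap lies in $\ol{B_{R_0+1}(z_t-(R_0+1)\cos\theta E_{n+1})}$, hence $|x-z_t|< 2(R_0+1)$. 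Consequently, if $|z_t|> C_0+2(R_0+1)$, then setting $V:=z_t/|z_t|$ gives $\<x,V\>\ge\<z_t,V\>-|x-z_t|> C_0$ for all $x\in\wh{\S_t}$, i.e. $\S_t\subset H^+(\Pi_V^{C_0})$, a contradiction. This forces $|z_t|\le C_0+2(R_0+1)$ uniformly in $t$, so $\wh{\S_t}\subset\ol{B_\rho(0)}\cap\ol{\RR^{n+1}_+}$ with $\rho:=C_0+4(R_0+1)$. Finally, an elementary computation of the same type as in the last part of the proof of Proposition~\ref{cap pinching est} (compare \eqref{outer rad-est}) shows that $\ol{B_\rho(0)}\cap\ol{\RR^{n+1}_+}\subset\wh{\C_{R^*,\theta}(0)}$ for $R^*:=\rho/(1-\cos\theta)$, so one takes $z^*:=0$ and $R^*$ as above to obtain \eqref{out cap enclose}.

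I do not expect a genuine obstacle here; the one point requiring care is the bookkeeping in the reflection step, where Remark~\ref{corCG} must be invoked in both directions — the initial hypersurface sits strictly inside the halfspace $H^-(\Pi_V^{C_0})$, while a hypothetically escaped $\S_t$ would sit strictly inside the opposite halfspace $H^+(\Pi_V^{C_0})$ — and the contradiction is with the strict reflection property preserved by Theorem~\ref{ChowGulliver}. It is worth noting that this argument uses neither the convexity of $\S_t$ (only $H>0$, needed for Theorem~\ref{ChowGulliver}) nor any local feature of the flow: the nonlocal term $\phi(t)$ takes the same value on $\S_t$ and on its mirror image, so the maximum principle behind Theorem~\ref{ChowGulliver} is unaffected. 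The remaining Euclidean geometry — estimating the diameter of a capillary cap and fitting one capillary cap into another — is routine.
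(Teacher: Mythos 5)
Your proposal is correct and follows essentially the same route as the paper: the uniform capillary outer-radius bound from Proposition~\ref{bound outer-inner radius} gives an enclosing cap with a possibly drifting center, the Alexandrov reflection result (Theorem~\ref{ChowGulliver} together with Remark~\ref{corCG}) rules out a large horizontal displacement of that center, and an elementary triangle-inequality computation then fits everything into a single fixed cap $\wh{\C_{R^*,\theta}(z^*)}$. The only differences are cosmetic choices of constants (the paper uses the sharper horizontal bound $R\sin\theta$ and gets $|z(t)|\le 2R\sin\theta$, leading to $R^*=\bigl(1+\tfrac{2\sin\theta}{1-\cos\theta}\bigr)R$), so no changes are needed.
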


\begin{proof}
Let us fix any $R> R_0$, with $R_0$ given by Proposition \ref{bound outer-inner radius}, and let  $\C_{R,\theta}(z^*)$ be a spherical cap enclosing $\Sigma_0$ in its interior. For simplicity of notation, we assume that $z^*=0$. Then, for any horizontal unit vector $V$ and $X \in \Sigma_0$ we have $V \cdot X < R \sin \theta$ and so $\Sigma_0 \subset H^-(\Pi_V^{c_0})$ with $c_0=R \sin \theta$.

Given any time $t>0$, Proposition \ref{bound outer-inner radius} shows that $\Sigma_t \subset \wh{\C_{R,\theta}(z)}$ for a suitable spherical cap centered at some point $z=z(t) \in \p \RR^{n+1}_+$. Then, for any horizontal unit vector $V$ and $X \in \Sigma_t$ we have $$V \cdot X > V \cdot z -R \sin \theta,$$ and therefore $\Sigma_t \subset H^+(\Pi_V^{c})$ for $c=V \cdot z -R \sin \theta$. By Remark \ref{corCG}, this implies that $c<c_0$, that is, $$V \cdot z < 2 R \sin \theta.$$ Since $V \in \p \RR^{n+1}_+$ is an arbitrary unit vector, this implies that \begin{eqnarray}\label{bound of z}
|z| \leq 2 R \sin \theta.
\end{eqnarray}
Let us now set $R^*:=\left(1+\frac{2 \sin \theta}{1 - \cos \theta} \right)R$. In view of $ \wh{\Sigma_t} \subset \wh{\C_{R,\theta}(z)}$ and \eqref{bound of z}, we find, for any $X \in \Sigma_t$,
\begin{eqnarray*}
|X+\cos \theta R^* E_{n+1}| & \leq & |X-(z-\cos \theta R E_{n+1})| + |z| + (R^*-R)\cos \theta \\
& \leq & R+ 2 R \sin \theta + (R^*-R)\cos \theta \\
& = & R \left(1+2 \sin \theta +\frac{2 \sin \theta \cos \theta}{1 - \cos \theta} \right) =R^*,
\end{eqnarray*}
showing that $\wh{\Sigma_t} \subset \wh{\C_{{R^*},\theta}(0)}$ for all $t>0$.  
\end{proof}

To obtain a lower bound for $H$,  we adapt the method in \cite[Section 4.1]{BS}, which reverses the sign of the test function in  \eqref{test-Phi}. This idea had been previously used in \cite[Lemma 4.3]{S06} by Schn\"urer in the different setting of a local expanding curvature flow.
 The lower bound will ensure that the flow \eqref{MCF-capillary} is strictly parabolic uniformly in time. In contrast with the proof the upper bound in Proposition \ref{H upper bdd prop}, the nonlocal term $\phi$ in the equation plays here a crucial role and Proposition \ref{phi bounds} is essential.
\begin{prop}\label{H lower bound prop}
Let $\S_t$ be a solution of the flow \eqref{MCF-capillary}. Then there holds 
    \begin{eqnarray*}\label{H lower bdd}
        H  \geq C \mbox{ on }\S_t,~~~\forall ~ t\in [0, T^*),
    \end{eqnarray*} where $C$ is a positive constant, depending on the initial datum.
\end{prop}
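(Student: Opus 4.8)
The plan is to run the Tso-type argument in reverse, following Bertini--Sinestrari \cite[Section 4.1]{BS}, using a test function of the form
\begin{eqnarray*}
\Psi := \frac{H^\a}{\bar u - \beta}
\end{eqnarray*}
where this time $\bar u - \beta$ is negative, i.e. $\beta$ is a large positive constant chosen with $\beta > R^* > \bar u$, using the outer enclosure from Lemma~\ref{enclosed cap} and the bound $\rho_+(\wh{\S_t},\theta) \le R_0$ from Proposition~\ref{bound outer-inner radius}. One should first fix a center $z = z^*$ as in Lemma~\ref{enclosed cap} so that $\wh{\S_t} \subset \wh{\C_{R^*,\theta}(z^*)}$ for all $t$; evaluating the capillary support function $\bar u = \frac{\langle X - z^*,\nu\rangle}{1-\cos\theta\langle\nu,E_{n+1}\rangle}$ against this cap gives a uniform upper bound $\bar u \le R^*$, and convexity together with $\wh{\S_t}$ containing the origin-side region gives $\bar u \ge 0$ (or at worst bounded below). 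Then choosing $\beta := 2R^*$ makes $\bar u - \beta \le -R^* < 0$ with $|\bar u - \beta| \in [R^*, 2R^*]$, so that $\Psi < 0$ and an upper bound on $-\Psi$ (equivalently, showing $\Psi$ is bounded away from $-\infty$, i.e. bounded below) translates into an upper bound on $H^\a/|\bar u - \beta| \le$ const, which is the wrong direction; rather one shows $\Psi$ is bounded \emph{above} by a negative constant, which forces $H^\a \ge $ const $\cdot |\bar u - \beta| \ge$ const $> 0$.

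Concretely, I would compute $\L\Psi$ from the evolution equations \eqref{evo of bar u} and \eqref{evo of H alpha} in Proposition~\ref{evo of relative u} and Proposition~\ref{evo of H}, exactly paralleling the computation in the proof of Proposition~\ref{H upper bdd prop} but tracking the sign change coming from $\bar u - \beta < 0$. The key new feature, absent in the upper-bound argument, is that the term $-(\a H^{\a-1}|h|^2 + \Psi)\frac{\phi}{\bar u - \beta}$ no longer has a favorable sign from discarding $\phi$; instead one must \emph{keep} the $\phi$ term and use the lower bound $\phi \ge b > 0$ from Proposition~\ref{phi bounds}. Modulo gradient terms (handled by the maximum principle / Hopf lemma argument below), the differential inequality should take the schematic form
\begin{eqnarray*}
\L \Psi \;\le\; \frac{b\,\a H^{\a-1}|h|^2}{|\bar u - \beta|} \;-\; c_1 |h|^2 \Psi^{2} \;+\; c_2 \Psi^2 \;-\; c_3 H^{2\a}\,,\quad \text{mod }\n\Psi,
\end{eqnarray*}
where at a spatial \emph{minimum} of $\Psi$ (which is where the lower bound is in danger) the first "bad" term is controlled using $H^{\a-1}|h|^2 \le n H^{\a+1}$ together with $H$ being \emph{bounded above} by Proposition~\ref{H upper bdd prop}, so it is dominated once $H$ (hence $|\Psi|$) is small; and then the genuinely helpful negative contribution comes from a term like $+(1+\a)\Psi^2$ being beaten by a term $-\,\frac{\a}{n}|\beta|^{1+1/\a}\,|\Psi|^{2+1/\a}$ reversed in sign — wait, one must be careful here: since $\Psi<0$, I expect the correct reading to be that $\tilde\Psi(t) := \min_M \Psi(\cdot,t)$ satisfies an ODI forcing $\tilde\Psi$ to stay bounded below by a negative constant, of the form $\frac{d}{dt}\tilde\Psi \ge -C_1 - C_2\tilde\Psi^2$ whenever $\tilde\Psi$ is very negative, so $\tilde\Psi$ cannot diverge to $-\infty$ in finite time and in fact stabilizes.

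For the boundary behaviour I would use $\n_\mu H = 0$ \eqref{neumann of H} and $\n_\mu \bar u = 0$ \eqref{neumann of bar u}, which together give $\n_\mu \Psi = 0$ on $\p M$ exactly as in \eqref{phi neumann}; the Hopf boundary point lemma then guarantees that any spatial minimum of $\Psi$ at a positive time is attained at an interior point of $M$, so the tensor/scalar maximum principle applies with no boundary obstruction. Finally, combining the ODI for $\tilde\Psi$ with the initial bound $\Psi(\cdot,0) \ge \min_M H^\a(\cdot,0)/|\bar u(\cdot,0) - \beta| > -\infty$ yields a uniform negative lower bound $\Psi \ge -C$ on $[0,T^*)$, hence $H^\a = \Psi \cdot (\bar u - \beta) = |\Psi|\cdot|\bar u - \beta| \ge $ wait — sign again: $H^\a = \Psi(\bar u - \beta)$ with $\Psi<0$ and $\bar u-\beta<0$, so $H^\a = |\Psi|\,|\bar u-\beta|$; a lower bound on $H^\a$ thus needs a lower bound on $|\Psi|$, i.e. an \emph{upper} bound $\Psi \le -\delta < 0$. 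So the quantity to control is actually $\sup_M \Psi$, and what must be shown is that it stays strictly negative, bounded away from $0$; the bad direction is $\Psi \nearrow 0$, i.e. $H \searrow 0$. The ODI from the maximum principle applied to $\bar\Psi(t):=\sup_M\Psi(\cdot,t)$ should read $\frac{d}{dt}\bar\Psi \le -\frac{b\,\a}{n}|\beta|^{\,\cdots} + (\text{terms that vanish as }H\to 0\text{ because }|h|^2\le nH^2\text{ and }H\le C)$, so that $\frac{d}{dt}\bar\Psi \le -\delta_0 < 0$ whenever $\bar\Psi > -\delta$ — this is impossible to sustain, giving the uniform gap. The \textbf{main obstacle} is precisely getting the signs and the dominant balance right in $\L\Psi$: one has to verify that, near the regime $H\to 0$, the favorable term proportional to the nonlocal speed $\phi$ (bounded below by $b$) strictly dominates all the curvature-dependent error terms — this is where Proposition~\ref{phi bounds}, the upper curvature bound of Corollary~\ref{curvature bound}, and the enclosure Lemma~\ref{enclosed cap} are all used together, and where the capillary weight factor $(1-\cos\theta\langle\nu,E_{n+1}\rangle)$ and the extra gradient term $2\a H^{\a-1}\langle\n\Psi,\n(1-\cos\theta\langle\nu,E_{n+1}\rangle)\rangle$ must be absorbed cleanly into the "mod $\n\Psi$" bookkeeping so that the Hopf-lemma maximum principle still applies.
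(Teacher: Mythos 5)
Your proposal follows essentially the same route as the paper: a reversed Tso argument in the spirit of Bertini--Sinestrari, with the test function $H^\a/(\bar u-\beta)$ (which, up to an overall sign, is exactly the paper's $\Psi=H^\a/(c-\bar u)$ with $c=2R^*$ from Lemma \ref{enclosed cap}), the Neumann conditions \eqref{neumann of H}, \eqref{neumann of bar u} plus the Hopf lemma to push extrema to the interior, and the crucial use of the lower bound $\phi\geq b$ from Proposition \ref{phi bounds} together with $|h|^2\lesssim H^2$ and the upper curvature bound to show the $\phi$-term dominates when $H$ is small. After your self-corrections the signs and the quantity to control ($\Psi$ bounded away from $0$ in the dangerous direction $H\to 0$) coincide with the paper's argument, so the approach is essentially identical.
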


\begin{proof} 

For any time $t\geq 0$, let $z^*$ be chosen as in Lemma \ref{enclosed cap}. Then  we know that
\begin{eqnarray*}
 0 \leq \bar u:=\frac{\<X-z^*,\nu\>}{1-\cos\theta \<\nu, E_{n+1}\>}  \leq \frac c 2,
\end{eqnarray*} for some positive constant $c:=2R^*>0$, where $R^*$ is given in Lemma \ref{enclosed cap}. In turn
\begin{eqnarray}\label{bound of bar u}
    \frac{c}{2}\leq c- \bar u\leq c,
\end{eqnarray}for all $t\in [0,T^*)$. We then consider the  function
\begin{eqnarray}\label{test-Psi}
    \Psi :=\frac{H^\a}{c- \bar u},
\end{eqnarray}which is well-defined for  $t\in [0,T^*)$. From \eqref{phi neumann}, 
\begin{eqnarray}\label{neum of Psi}
    \n_\mu \Psi=0 \text{ on } \p M.
\end{eqnarray}
With computations similar to the proof of Proposition \ref{H upper bdd prop}, we have
\begin{eqnarray*}
    \L \Psi&= &(\a H^{\a-1} |h|^2-\Psi) \frac{\phi}{\bar u-c}+ \frac{ c\a H^{2\a-1}|h|^2}{(c-\bar u)^2} -(1+\a)\Psi^2  \\&& +2\a H^{\a-1} \<\n \Psi, \n (1-\cos\theta \<\nu, E_{n+1})\> 
    \\ & &+\frac{2\a(1-\cos\theta \<\nu, E_{n+1}\> H^{\a-1}}{\bar u-c} \<\n \Psi, \n \bar u\>
    \\& \geq & (\a H^{\a-1} |h|^2-\Psi) \frac{\phi}{\bar u-c}-(1+\a)\Psi^2 , \text{ mod } \n \Psi. 
\end{eqnarray*}
If the minimum value of $\Psi$ is reached at $t=0$, then we are done. Otherwise, from $\n_{\mu}\Psi= 0$ on $\p M$ in \eqref{neum of Psi} and the Hopf boundary point lemma, we have that $\Psi$ attains its minimum value at some interior point, say $p_{0}\in {\rm int}(M)$. Let us choose $\varepsilon_0>0$ sufficiently small so that  $0<H\leq \varepsilon_0$ implies   
\begin{eqnarray}\label{small H}
    \a H\leq \frac{1}{2c},\quad  2(1+\a) H^\a\leq \frac b 4,
\end{eqnarray}where $b>0$ is the uniform lower bound of $\phi(t)$ in Proposition \ref{phi bounds}. Let us now suppose that $H(p_0) \leq \varepsilon_0$. From the evolution equation for $\L \Psi$,  and using $|h|^2\leq H^2$, Proposition \ref{phi bounds},  \eqref{enclosed cap}, \eqref{bound of bar u} and \eqref{small H}, we have at $p_0$,  
\begin{eqnarray*}
    0 &\geq& \L \Psi \geq \frac{\phi \Psi}{c-\bar u}- \frac{\a H^{\a+1}\phi}{c-\bar u}-(1+\a)\Psi^2 \quad \text{ mod } \n \Psi 
    \\&\geq &  \Psi \left[ \phi \left( \frac 1 c-\a H \right)-(1+\a) \Psi  \right] 
\\& \geq & \Psi \left[ \frac{b}{2c}- 2(1+\a)  \frac{H^\a}{c}\right]>0,
\end{eqnarray*}which is a contradiction. Hence we conclude that
\begin{eqnarray*}
    \Psi \geq \min \left\{ \Psi(\cdot, 0), \frac{\varepsilon_0^\a}{c} \right\}.
\end{eqnarray*}Together with \eqref{bound of bar u}, this yields the uniform lower bound for $H$. 

\end{proof}
 
\subsection{Proof of Theorem \ref{thm1}} \ 

\begin{thm}\label{unif est}    The solution of flow \eqref{MCF-capillary} exists for all time and satisfies uniform $C^\infty$-estimates.\end{thm}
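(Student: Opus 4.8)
The plan is to derive the theorem from the a priori estimates of this section together with standard parabolic regularity theory and the continuation criterion recalled after Proposition~\ref{basic evolution eqs}. The first step is to observe that the flow is uniformly parabolic on $[0,T^*)$ with constants depending only on $\S_0$: by Proposition~\ref{convexity pres} the $\S_t$ remain strictly convex, by Corollary~\ref{curvature bound} their principal curvatures are bounded above, and by Proposition~\ref{H lower bound prop} the mean curvature is bounded below by a positive constant, so $0<c\le H\le C$ and the coefficient $\a(1-\cos\theta\<\nu,E_{n+1}\>)H^{\a-1}$ of $\Delta$ in the operator \eqref{para operator} is pinched between two positive constants (using also $1-\cos\theta\<\nu,E_{n+1}\>\in[1-\cos\theta,1+\cos\theta]$). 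Moreover, Proposition~\ref{bound outer-inner radius} and Lemma~\ref{enclosed cap} confine $\S_t$ to a fixed compact region of $\ol{\RR^{n+1}_+}$ with uniformly controlled capillary inner and outer radii, and by uniform strict convexity the Gauss map of $\S_t$ is a diffeomorphism onto a fixed spherical--cap region of $\SS^n$, fixed because the condition $\<\nu,E_{n+1}\>=\cos\theta$ pins the image of $\p\S_t$ to a fixed circle. Representing $\S_t$ locally as a graph in a direction transverse to $\S_t$ (possible on neighbourhoods of uniform size by the curvature bound), the flow \eqref{MCF-capillary} becomes a scalar parabolic equation $\p_t u=F(D^2u,Du,u,\cdot)$ in which $H$ depends affinely on $D^2u$, and the capillary condition becomes a uniformly oblique boundary condition near $\p\S_t$; the preceding estimates then give uniform $C^1$ bounds on $u$, while the two-sided curvature bounds give uniform $C^2$ bounds on $u$.

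Next I would upgrade these to uniform $C^{2,\b}$ bounds. Since $H$ is affine in $D^2u$ and $r\mapsto r^\a$ is convex for $\a\ge1$ and concave for $0<\a\le1$, the operator $F=\phi(t)-H^\a$ is, up to the bounded $x$--independent term $\phi(t)$ (controlled by Proposition~\ref{phi bounds}), a concave function of $D^2u$ when $\a\ge1$ and a convex one when $0<\a\le1$. Hence the Evans--Krylov theorem, in its interior and oblique--boundary versions — for the latter one uses that the capillary condition is uniformly oblique together with the Neumann--type identities \eqref{neumann of H}, \eqref{neumann of bar u} — yields uniform interior and boundary $C^{2,\b}$ estimates for some $\b\in(0,1)$. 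Once $u\in C^{2,\b}$ in space--time, the integrands in \eqref{phi-volume pres}--\eqref{phi-area pres} and the domain $\wh{\S_t}$ depend on $t$ in a $C^\b$ fashion, so $\phi\in C^\b([0,T^*))$; differentiating the equation and applying parabolic Schauder estimates (with the oblique boundary condition) repeatedly then gives uniform $C^{k,\b}$ bounds for every $k$, i.e. uniform $C^\infty$--estimates, for as long as the solution exists.

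Long--time existence then follows: by the continuation criterion, if $T^*<+\infty$ then as $t\to T^*$ either $|h|^2$ becomes unbounded or $\inf_{\S_t}H\to0$; the former is excluded by Corollary~\ref{curvature bound} and the latter by Proposition~\ref{H lower bound prop}, so $T^*=+\infty$. Consequently the uniform $C^\infty$--estimates obtained above hold on all of $[0,+\infty)$, which is the assertion of the theorem.

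I expect the main obstacle to be the boundary analysis: verifying that $\<\nu,E_{n+1}\>=\cos\theta$ translates into a genuinely and uniformly oblique boundary condition for the scalar equation, and that the Evans--Krylov and Schauder estimates are available up to the boundary of the parametrizing domain; this is the one step that is not a verbatim repetition of the closed--hypersurface arguments in \cite{Sch05,S15}. A secondary, minor point is the Hölder--in--time regularity of the nonlocal term $\phi$, needed for the Schauder bootstrap to close, which is immediate once the $C^{2,\b}$ bound is in hand since $\phi$ is then an integral of Hölder quantities over a Hölder--moving domain.
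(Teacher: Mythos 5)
Your proposal is correct and follows essentially the same route as the paper: both reduce the flow to a scalar, uniformly parabolic equation with a uniformly oblique (capillary) boundary condition, feed in the $C^2$ control coming from Proposition \ref{bound outer-inner radius}, Corollary \ref{curvature bound} and the two-sided bounds on $H$ (Propositions \ref{H upper bdd prop} and \ref{H lower bound prop}), and then invoke fully nonlinear parabolic regularity up to the boundary plus a Schauder bootstrap, concluding long-time existence from the continuation criterion. The only cosmetic difference is that the paper writes $\S_t$ globally as a radial graph over $\bar\SS^n_+$ and cites the oblique-boundary regularity theorems of Dong and Lieberman, where you use local graphs and spell out the Evans--Krylov plus Schauder argument (including the H\"older-in-time control of $\phi$) by hand.
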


\begin{proof}
    In order to show the long-time existence of flow \eqref{MCF-capillary}, it is convenient to represent the convex hypersurface $\S_t$ as the radial graph over semi-sphere $\bar\SS^n_+$ for some function $\rho(x,t)$ with $(x,t)\in \bar\SS^n_+\times[0, T^*)$, and obtain uniform estimates on the derivatives of $\rho$, see for instance \cite[Section 4]{WWX22} or \cite[Section 3.4]{WeX21}.  Then equation \eqref{MCF-capillary} reduces to a scalar parabolic equation for the function $\rho$ with a capillary boundary value condition on $\p\SS^n_+$. From Proposition \ref{bound outer-inner radius} and Corollary \ref{curvature bound}, we know that $\rho$ is uniformly bounded in $C^2(\bar\SS^n_+\times [0,T^*)$. The corresponding scalar flow for $\rho$  is uniformly parabolic (due to Proposition \ref{H lower bound prop} and Proposition \ref{H upper bdd prop}) and the boundary value condition of $\rho$ satisfies a uniformly oblique property due to $|\cos\theta|<1$. From standard parabolic theory (see e.g. \cite[Theorem 6.1, Theorem 6.4 and Theorem 6.5]{Dong}, also  \cite[Theorem 14.23]{Lie}), we obtain uniform $C^\infty$-estimates for $\rho$. In turn, the long-time existence of solution \eqref{MCF-capillary} follows.
\end{proof}

Finally, we conclude the proof of Theorem \ref{thm1}.
\begin{proof}[\textbf{Proof of Theorem \ref{thm1}}]\ 

For the convergence of the flow \eqref{MCF-capillary}, we can argue using the same way as \cite[Section 4]{S15} (or \cite[Section 4.2]{BS}). 
We exploit the monotonicity of $W_{\theta}(\widehat{\S_t})$ (resp. of $|\wh{\S_t}|$) along our flow. Taking into account the uniform estimates of Theorem \ref{unif est}, we see that the time derivative of $W_{\theta}(\widehat{\S_t})$ (resp. of $|\widehat{\S_t}|$) must tend to zero as $t \to +\infty$. From the explicit expression of these derivatives in the proof of Proposition \ref{cap isoper ratio}, we see that this implies
  \begin{eqnarray*}
        \lim_{t\to+\infty} \max_{\S_t} |H(\cdot,t)-q(t)|=0,
    \end{eqnarray*}
with $q(t)$ as in the proof of Proposition \ref{cap isoper ratio}, which in turn yields
  \begin{eqnarray*}
        \lim_{t\to+\infty} \max_{\S_t} |H^\a(\cdot,t)-\phi(t)|=0.
    \end{eqnarray*}
Therefore, using from Theorem \ref{unif est} and compactness, we see that any possible limit of subsequences of $\S_t$ has constant mean curvature with capillary boundary. 
Combining with the conclusion in \cite[Corollary 1.2]{JWXZ} (see also \cite{Wente}) we know that the limit is a spherical cap, with radius uniquely determined by the constraint on the volume (resp. on the capillary area). By a standard procedure, cf. \cite[Section 4.5]{WWX22} or \cite[Section 3.4]{WeX21}, one can show that since any limit of a convergent subsequence is uniquely determined, then the whole family $\S_t$ smoothly converges to a spherical cap.
\end{proof}

\

\noindent\textit{Acknowledgments:} This work was partially supported by: MIUR Excellence Department Project awarded to the Department of Mathematics, University of Rome Tor Vergata, CUP E83C18000100006  and MUR Excellence Department Project MatMod@TOV  CUP E83C23000330006. C.S. was partially supported by the MUR Prin 2022 Project  "Contemporary perspectives on geometry and gravity"
CUP E53D23005750006 and by the Project "ConDiTransPDE" of the University of Rome "Tor Vergata" CUP E83C22001720005. C.S.
is a member of the group GNAMPA of INdAM.
 L.W. would also like to express his sincere gratitude to Prof. ssa G. Tarantello for her constant encouragement and support.

\end{document}